\tikzstyle{const dot}=[fill=red, draw = black, shape=circle,inner sep=0pt,minimum size =3pt]
\tikzstyle{red dot}=[fill=red, draw=black, shape=circle,inner sep=0pt,minimum size=2pt]
\tikzstyle{red dot 0}=[fill=red, draw=black, shape=circle,inner sep=0pt,minimum size=0pt]
\tikzstyle{red dot 1}=[fill=red, draw=black, shape=circle,inner sep=0pt,minimum size=1pt]
\tikzstyle{red dot 2}=[fill=red, draw=black, shape=circle,inner sep=0pt,minimum size=2pt]
\tikzstyle{red dot 3}=[fill=red, draw=black, shape=circle,inner sep=0pt,minimum size=3pt]
\tikzstyle{blue dot 3}=[fill=blue, draw=black, shape=circle,inner sep=0pt,minimum size=3pt]
\tikzstyle{red dot big}=[fill=red, draw = black, shape=circle,inner sep=0pt,minimum size =10pt]
\tikzstyle{green dot}=[fill=green, draw=black, shape=circle]
\tikzstyle{blue dot}=[fill=blue, draw=black, shape=circle]
\tikzstyle{test}=[fill=black, draw=black]
\tikzstyle{test 1}=[fill=black, draw=black, shape=circle,inner sep=0pt,minimum size = 1pt]
\tikzstyle{blue edge}=[-, draw=blue]
\tikzstyle{black arr}=[->,>=latex]
\tikzstyle{blue arr}=[->, draw=blue,>=latex]
\tikzstyle{blue arr short3}=[->, draw=blue, shorten >=3pt,>=latex]
\tikzstyle{blue arr short1}=[->, draw=blue, shorten >=1pt,>=latex]
\tikzstyle{black arr short3}=[->, draw=black, shorten >=3pt,>=latex]
\tikzstyle{black arr short1}=[->, draw=black, shorten >=1pt,>=latex]
\tikzstyle{green arr}=[->, draw=green,>=latex]
\tikzstyle{red arr}=[->, draw=red,>=latex]
\tikzstyle{65}=[-, draw={rgb,255:red,0; green,0; blue,0}]
\tikzstyle{64}=[-, draw={rgb,255:red,0; green,0; blue,0}]
\tikzstyle{63}=[-, draw={rgb,255:red,0; green,0; blue,0}]
\tikzstyle{62}=[-, draw={rgb,255:red,0; green,0; blue,0}]
\tikzstyle{61}=[-, draw={rgb,255:red,0; green,0; blue,0}]
\tikzstyle{60}=[-, draw={rgb,255:red,0; green,0; blue,0}]
\tikzstyle{59}=[-, draw={rgb,255:red,0; green,0; blue,0}]
\tikzstyle{58}=[-, draw={rgb,255:red,0; green,0; blue,0}]
\tikzstyle{57}=[-, draw={rgb,255:red,0; green,0; blue,0}]
\tikzstyle{56}=[-, draw={rgb,255:red,0; green,0; blue,0}]
\tikzstyle{55}=[-, draw={rgb,255:red,0; green,0; blue,0}]
\tikzstyle{54}=[-, draw={rgb,255:red,0; green,0; blue,0}]
\tikzstyle{53}=[-, draw={rgb,255:red,0; green,0; blue,0}]
\tikzstyle{52}=[-, draw={rgb,255:red,0; green,0; blue,0}]
\tikzstyle{51}=[-, draw={rgb,255:red,0; green,0; blue,0}]
\tikzstyle{50}=[-, draw={rgb,255:red,0; green,0; blue,0}]
\tikzstyle{49}=[-, draw={rgb,255:red,0; green,0; blue,0}]
\tikzstyle{48}=[-, draw={rgb,255:red,0; green,0; blue,0}]
\tikzstyle{47}=[-, draw={rgb,255:red,0; green,0; blue,0}]
\tikzstyle{46}=[-, draw={rgb,255:red,0; green,0; blue,0}]
\tikzstyle{45}=[-, draw={rgb,255:red,0; green,0; blue,0}]
\tikzstyle{44}=[-, draw={rgb,255:red,0; green,0; blue,0}]
\tikzstyle{43}=[-, draw={rgb,255:red,0; green,0; blue,0}]
\tikzstyle{42}=[-, draw={rgb,255:red,0; green,0; blue,0}]
\tikzstyle{41}=[-, draw={rgb,255:red,0; green,0; blue,0}]
\tikzstyle{40}=[-, draw={rgb,255:red,0; green,0; blue,0}]
\tikzstyle{39}=[-, draw={rgb,255:red,0; green,0; blue,0}]
\tikzstyle{38}=[-, draw={rgb,255:red,0; green,0; blue,0}]
\tikzstyle{37}=[-, draw={rgb,255:red,0; green,0; blue,0}]
\tikzstyle{36}=[-, draw={rgb,255:red,0; green,0; blue,0}]
\tikzstyle{35}=[-, draw={rgb,255:red,0; green,0; blue,0}]
\tikzstyle{34}=[-, draw={rgb,255:red,0; green,0; blue,0}]
\tikzstyle{33}=[-, draw={rgb,255:red,0; green,0; blue,0}]
\tikzstyle{32}=[-, draw={rgb,255:red,0; green,0; blue,0}]
\tikzstyle{31}=[-, draw={rgb,255:red,0; green,0; blue,0}]
\tikzstyle{30}=[-, draw={rgb,255:red,0; green,0; blue,0}]
\tikzstyle{29}=[-, draw={rgb,255:red,0; green,0; blue,0}]
\tikzstyle{28}=[-, draw={rgb,255:red,0; green,0; blue,0}]
\tikzstyle{27}=[-, draw={rgb,255:red,0; green,0; blue,0}]
\tikzstyle{26}=[-, draw={rgb,255:red,0; green,0; blue,0}]
\tikzstyle{25}=[-, draw={rgb,255:red,0; green,0; blue,0}]
\tikzstyle{24}=[-, draw={rgb,255:red,0; green,0; blue,0}]
\tikzstyle{23}=[-, draw={rgb,255:red,0; green,0; blue,0}]
\tikzstyle{22}=[-, draw={rgb,255:red,0; green,0; blue,0}]
\tikzstyle{21}=[-, draw={rgb,255:red,0; green,0; blue,0}]
\tikzstyle{20}=[-, draw={rgb,255:red,0; green,0; blue,0}]
\tikzstyle{19}=[-, draw={rgb,255:red,0; green,0; blue,0}]
\tikzstyle{18}=[-, draw={rgb,255:red,0; green,0; blue,0}]
\tikzstyle{17}=[-, draw={rgb,255:red,0; green,0; blue,0}]
\tikzstyle{16}=[-, draw={rgb,255:red,0; green,0; blue,0}]
\tikzstyle{15}=[-, draw={rgb,255:red,0; green,0; blue,0}]
\tikzstyle{14}=[-, draw={rgb,255:red,0; green,0; blue,0}]
\tikzstyle{13}=[-, draw={rgb,255:red,0; green,0; blue,0}]
\tikzstyle{12}=[-, draw={rgb,255:red,0; green,0; blue,0}]
\tikzstyle{11}=[-, draw={rgb,255:red,0; green,0; blue,0}]
\tikzstyle{10}=[-, draw={rgb,255:red,0; green,0; blue,0}]
\tikzstyle{9}=[-, draw={rgb,255:red,0; green,0; blue,0}]
\tikzstyle{8}=[-, draw={rgb,255:red,0; green,0; blue,0}]
\tikzstyle{7}=[-, draw={rgb,255:red,0; green,0; blue,0}]
\tikzstyle{5}=[-, draw={rgb,255:red,0; green,0; blue,0}]
\tikzstyle{4}=[-, draw={rgb,255:red,0; green,0; blue,0}]
\tikzstyle{2}=[-, draw={rgb,255:red,0; green,0; blue,0}]
\tikzstyle{1}=[-, draw={rgb,255:red,0; green,0; blue,0}]
\tikzstyle{0}=[-, draw={rgb,255:red,0; green,0; blue,0}]
\newtheorem{theorem}{Theorem}[section]
\newtheorem{corollary}[theorem]{Corollary}
\newtheorem{lemma}[theorem]{Lemma}
\newtheorem{proposition}[theorem]{Proposition}
\newtheorem{question}[theorem]{Question}
\theoremstyle{remark}
\newtheorem{remark}[theorem]{Remark}
\newtheorem{example}[theorem]{Example}
\theoremstyle{definition}
\newtheorem{definition}[theorem]{Definition}
\newcommand{\Q}{\ensuremath{\mathbb{Q}}}
\newcommand{\N}{\ensuremath{\mathbb{N}}}
\newcommand{\Z}{\ensuremath{\mathbb{Z}}}
\newcommand{\F}{\ensuremath{\mathbb{F}}}
\newcommand{\Or}{\ensuremath{\mathcal{O}}}
\newcommand{\G}{\ensuremath{\mathcal{G}}}
\newcounter{todocnth}
\newcounter{todocntfa}
\newcounter{todocntfr}
\DeclareMathOperator{\en}{End}
\DeclareMathOperator{\dg}{deg}
\DeclareMathOperator{\Ker}{ker}
\DeclareMathOperator{\Cl}{Cl}
\DeclareMathOperator{\ord}{ord}
\DeclareMathOperator{\Gal}{Gal}
\begin{document}
\title{Ordinary isogeny graphs over $\mathbb{F}_p$: the inverse volcano problem}
\author{{H}enry {B}ambury,
{F}rancesco {C}ampagna,
{F}abien {P}azuki}

\address{Henry Bambury. Ecole Polytechnique,
Institut Polytechnique de Paris, Palaiseau, France.} \email{henry.bambury@polytechnique.edu}

\address{Francesco Campagna.
Max-Planck-Institut f\"ur Matematik, Vivatsgasse 7, 53111 Bonn, Germany. }
\email{campagna@mpim-bonn.mpg.de}

\address{Fabien Pazuki. University of Copenhagen, Institute of Mathematics, Universitetsparken 5, 2100 Copenhagen, Denmark, and Universit\'e de Bordeaux, IMB, 351, cours de la Lib\'eration, 33400 Talence, France. }
\email{fpazuki@math.ku.dk}

\date{}
\maketitle

%\tableofcontents 

\begin{abstract}
We give a detailed presentation of $\ell$-isogeny graphs associated with ordinary elliptic curves defined over $\mathbb{F}_p$. We then focus on the following inverse problem: given an abstract volcano $V$, do there always exist primes $\ell, p \in \mathbb{N}$ such that the ordinary $\ell$-isogeny graph over $\mathbb{F}_p$ contains $V$ as a connected component? We provide an affirmative answer to this question.
\end{abstract}
% \begin{center}
% \vspace{1cm}
% \scalebox{2}{\tikzfig{volcanoes/l=3/255313/61}}
% %\caption{Example of a Volcano with $368$ vertices}
% \end{center}

{\flushleft
\textbf{Keywords:} Isogenies, modular polynomials.\\
\textbf{Mathematics Subject Classification:} 11G18, 11G20, 14G17, 14K02. }

\section{Introduction}

Given a finite field $\mathbb{F}$ of characteristic $p$ and a prime number $\ell \neq p$, one can consider the so-called \textit{$\ell$-isogeny graph over $\mathbb{F}$}. Roughly speaking, one can construct this graph $\mathcal{G}$ by taking as vertices the elements of $\mathbb{F}$, which in this context are seen as the $j$-invariants of elliptic curves defined over $\mathbb{F}$, and by drawing an oriented edge from one vertex to another if there is a geometric isogeny of degree $\ell$ between the corresponding elliptic curves. The graph $\mathcal{G}$ naturally decomposes into two subgraphs $\mathcal{G}_\text{ord}$ and $\mathcal{G}_\text{ss}$, which are respectively induced by the elements of $\mathbb{F}$ that are $j$-invariants of ordinary and supersingular elliptic curves. Picturally, the structural difference between these two subgraphs is clear.

The supersingular isogeny graph $\mathcal{G}_\text{ss}$ has been first studied in \cite{Pizer_90} as an example of large Ramanujan graph. Due to their complicated structure, supersingular isogeny graphs have found several cryptographic applications, see for instance \cite{hash_09} and \cite{DeFeo_14}. On the other hand, the ordinary isogeny graph $\mathcal{G}_\text{ord}$ has a much more regular structure which has first been studied in the seminal work of Kohel \cite{Koh96} and subsequently by Fouquet and Morain \cite{FouM02}. In particular, in \cite{FouM02} the authors coined the term \textit{isogeny volcano} to denote connected components of $\mathcal{G}_\text{ord}$. This terminology is justified by the fact that the connected components of ordinary isogeny graphs often appear as a cycle, the \textit{crater}, whose vertices are roots of isomorphic trees, the \textit{lava flows}. Because of this satisfyingly regular structure, ordinary isogeny graphs have attracted a lot of attention, finding applications both in computational number theory (see for instance \cite{Sut12,Sut13}) and in  cryptography (see for instance \cite{Mir07}).

We focus here on ordinary isogeny graphs over $\mathbb{F}_p$ and the text consists of two parts. The first part is essentially a review of known results on the structure of $\mathcal{G}_\text{ord}$. We gather in a unique place the terminology that appeared in different works and provide details when needed. 
%The goal here is to create a unifying text that can make easier for the neophyte to approach the theory and simpler for the everyday user to find references. Therefore, our approach in presenting the theory is to give precise references when possible and full arguments when we did not manage to find them in the literature. 
Within what we call the \textit{volcano park}, we thus identify the \textit{cordilleras} in Section \ref{subsec:cordilleras}, the \textit{belts} in Section \ref{subsec:belts}, and finally the \textit{volcanoes} in Section \ref{subsec:volcanoes}. We also treat in full detail the pathological cases corresponding to the $j$-invariants $0$ and $1728$ (see for instance Propositions \ref{prop:zero} and \ref{prop:1728}). The geological lexicon we use is sometimes new and sometimes borrowed from previous works; for instance, the term ``cordillera" first appeared in \cite{Mir07}.

In the second part of this manuscript we solve the following inverse problem: suppose we are given an abstract volcano graph $V$, \textit{i.e.} a graph that looks like a genuine volcano (see Definition \ref{volcano:graph}); does there then exist a pair of distinct primes $\ell$ and $p$ such that the ordinary $\ell$-isogeny graph over $\mathbb{F}_p$ has $V$ as a connected component? If the target abstract volcano is given only as a crater with no lava flows, the answer to this question is easier since we have a lot of freedom in the choice of $\ell$ and $p$. We prove the following result (see again Definition \ref{volcano:graph} for terminology):

\begin{theorem}
\label{thm:abstract_crater_depth0_intro}
Let $V$ be an abstract volcano of depth $0$. Then there exists infinitely many distinct primes $p, \ell \in \mathbb{Z}$ such that $V$ is a connected component of the $\ell$-isogeny graph over $\mathbb{F}_p$.
\end{theorem}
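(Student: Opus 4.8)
The plan is to exploit that a connected component of depth $0$ is, by Definition~\ref{volcano:graph}, simply a cycle: there are no lava flows to build, and producing $V$ amounts to prescribing the length $n$ of its crater (with the evident reading of the cases $n\in\{1,2\}$). By the structural description of ordinary $\ell$-isogeny graphs recalled in the first part of the paper, it is enough to produce two distinct primes $\ell\ne p$ and an ordinary elliptic curve $E/\mathbb{F}_p$, with $\mathcal{O}:=\en(E)$ and Frobenius $\pi$, such that $v_\ell(\operatorname{cond}(\mathbb{Z}[\pi]))=0$ (so that the component of $j(E)$ has depth $0$), such that $\ell$ splits in $\mathcal{O}$, and such that the class $[\mathfrak{l}]$ of a prime $\mathfrak{l}\subset\mathcal{O}$ above $\ell$ has order exactly $n$ in $\Cl(\mathcal{O})$ (so that this component, now a pure crater, is a cycle of length $n$, i.e.\ a copy of $V$, using also that $j(E)\notin\{0,1728\}$ so the pathological vertices are irrelevant). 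Each ingredient will be produced in infinitely many ways, so the infinitude in the statement will be automatic.

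First I fix the imaginary quadratic field $K=\mathbb{Q}(\sqrt{-7})$, which has class number $1$. For a rational prime $q$ inert in $K$, the order $\mathcal{O}_q\subset\mathcal{O}_K$ of conductor $q$ has $\Cl(\mathcal{O}_q)\cong(\mathcal{O}_K/q\mathcal{O}_K)^\times/(\mathbb{Z}/q\mathbb{Z})^\times\cong\mathbb{F}_{q^2}^\times/\mathbb{F}_q^\times$, a cyclic group of order $q+1$ (the unit term in the conductor exact sequence vanishes because $\mathcal{O}_K^\times=\{\pm1\}$). A prime $q$ is inert in $K$ exactly when $q\bmod 7\in\{3,5,6\}$ (since $\left(\frac{-7}{q}\right)=\left(\frac{q}{7}\right)$), and as $-1\equiv 6\pmod 7$ lies in this set, requiring $q\equiv-1\pmod{\operatorname{lcm}(7,n)}$ forces $q$ to be inert in $K$ and $n$ to divide $q+1$. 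By Dirichlet infinitely many such $q$ exist; I fix one, set $\mathcal{O}:=\mathcal{O}_q$, and note that $\Cl(\mathcal{O})$, being cyclic of order divisible by $n$, contains elements of order exactly $n$. Since every ideal class of $\mathcal{O}$ contains infinitely many degree-one prime ideals (Chebotarev for the ring class field $H_\mathcal{O}/K$), I may choose such a prime $\mathfrak{l}\subset\mathcal{O}$, of degree one over a rational prime $\ell$ with $\ell$ odd, $\ell\ne 7$ and $\ell\nmid q$, whose class has order exactly $n$ in $\Cl(\mathcal{O})$; then $\ell$ splits in $K$, hence in $\mathcal{O}$, and there remain infinitely many admissible $\ell$.

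It remains to find, for this $\mathcal{O}$ and $\ell$, infinitely many primes $p$ carrying an ordinary $E/\mathbb{F}_p$ with $\en(E)=\mathcal{O}$ and $v_\ell(\operatorname{cond}(\mathbb{Z}[\pi]))=0$. Set $\mathcal{O}'=\mathbb{Z}+\ell\mathcal{O}=\mathcal{O}_{\ell q}$, of conductor $\ell q$; its ring class field properly contains $H_\mathcal{O}$, because $[H_{\mathcal{O}'}:H_\mathcal{O}]=h(\mathcal{O}')/h(\mathcal{O})=\ell-1\ge 2$ (using $\ell$ odd, split in $\mathcal{O}$, and no extra units). By Chebotarev there are infinitely many primes $p$, lying outside a fixed finite set and distinct from $\ell$, that split completely in $H_\mathcal{O}$ but not in $H_{\mathcal{O}'}$. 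For such a $p$, splitting completely in $H_\mathcal{O}$ gives $\pi\in\mathcal{O}$ with $\pi\bar\pi=p$, and the reduction of the elliptic curve with complex multiplication by $\mathcal{O}$ (defined over $H_\mathcal{O}$, with good reduction above $p$) at a prime above $p$ is a curve $E/\mathbb{F}_p$ with $\en(E)=\mathcal{O}$ by Deuring's theorem on ordinary reduction; it is ordinary since $p$ splits in $K$, and $j(E)\notin\{0,1728\}$ since $\operatorname{disc}(\mathcal{O})=-7q^2\notin\{-3,-4\}$. On the other hand, failure of $p$ to split completely in $H_{\mathcal{O}'}$ means precisely $\pi\notin\mathcal{O}'$, i.e.\ $\ell\nmid[\mathcal{O}:\mathbb{Z}[\pi]]$; together with $\ell\nmid\operatorname{cond}(\mathcal{O})=q$ this yields $v_\ell(\operatorname{cond}(\mathbb{Z}[\pi]))=0$. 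Hence the $\ell$-isogeny component of $j(E)$ over $\mathbb{F}_p$ is a cycle of length $\ord_{\Cl(\mathcal{O})}([\mathfrak{l}])=n$, i.e.\ a copy of $V$; letting $p$ (and, if one wishes, also $\ell$) vary produces infinitely many pairs.

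The inputs I have used are standard: that splitting completely in $H_\mathcal{O}$ is equivalent to $p$ being the norm of a principal proper $\mathcal{O}$-ideal; that the condition $\pi\notin\mathcal{O}'$ is insensitive to the ambiguity $\pi\mapsto\pm\pi,\pm\bar\pi$ because $\mathcal{O}'$ is stable under conjugation and sign; the class-number ratio $h(\mathcal{O}_{\ell q})/h(\mathcal{O}_q)=\ell-1$; and Deuring's theorem that ordinary reduction preserves the endomorphism ring. The genuinely delicate point is the \emph{simultaneous} control in the last step: we need $\en(E)$ to be exactly $\mathcal{O}$ (which forces $p$ into $H_\mathcal{O}$) while the depth is exactly $0$ (which keeps $p$ out of the larger field $H_{\mathcal{O}'}$), and the argument works only because these two constraints are jointly non-vacuous --- which is why it was important to take $\ell$ odd, so that $[H_{\mathcal{O}'}:H_\mathcal{O}]\ge 2$. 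By contrast, prescribing the crater length $n$ is easy here, precisely because we work with non-maximal orders in a fixed class-number-one field, where $\Cl(\mathcal{O}_q)$ is cyclic of order $q+1$ and hence completely at our disposal.
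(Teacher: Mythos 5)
Your argument for the hard case is correct, but your opening reduction is too quick and leaves a genuine (if small) gap. By Definition~\ref{volcano:graph}, a depth-$0$ abstract volcano is a crater $V_0$ of \emph{any} of the six shapes in Proposition~\ref{horizontal}, and three of these are not cycles in your sense: the isolated vertex (inert case), the single vertex with one self-loop (ramified, $\mathfrak{L}$ principal), and the pair of vertices joined by a single edge (ramified, $\mathfrak{L}$ non-principal). Your construction always produces a \emph{split} prime $\ell$, so it can only ever realise shapes (3), (5), (6) of Proposition~\ref{horizontal}; the blanket claim that a depth-$0$ component ``is simply a cycle'' is false as stated. The omission is easy to repair --- take $\ell$ inert, respectively ramified with $\mathfrak{L}$ principal or not, in a suitable field of discriminant $<-4$, which is exactly how the paper dispatches its cases (1)--(5) in one sentence before feeding everything into Proposition~\ref{prop:abstract_crater_realizable} --- but those three shapes do need at least a sentence each.

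For the genuinely difficult case, a cycle of length $n\ge 3$, your route is different from the paper's and is worth comparing. The paper invokes Yamamoto's theorem to produce a \emph{maximal} order of discriminant $<-4$ whose class group contains an element of order $n$, then finds an odd split $\ell$ in that class via Chebotar\"ev, and concludes by Proposition~\ref{prop:abstract_crater_realizable}. You instead fix $K=\Q(\sqrt{-7})$ and use the non-maximal orders $\Z+q\mathcal{O}_K$ with $q$ inert, whose class groups are \emph{explicitly} cyclic of order $q+1$ by the conductor exact sequence (the unit term dying since $D(\mathcal{O}_K)<-4$); prescribing an element of order $n$ then reduces to Dirichlet's theorem on primes $q\equiv -1$ modulo $\mathrm{lcm}(7,n)$. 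This trades Yamamoto's nontrivial input for a completely explicit and elementary construction, at the mild cost of working with non-maximal orders and checking $\ell\nmid q$ so that the primes above $\ell$ remain invertible --- which you do. Your final step (primes $p$ splitting completely in $H_{\mathcal{O}}$ but not in $H_{\Z+\ell\mathcal{O}}$, using $h(\Z+\ell\mathcal{O})/h(\mathcal{O})=\ell-1\ge2$ for odd split $\ell$, together with Deuring lifting and the identification of the level via $v_\ell([\mathcal{O}:\Z[\pi]])$) is a correct inline re-derivation of the $d=0$ case of Proposition~\ref{prop:abstract_crater_realizable}, essentially in the ``more sophisticated'' form sketched in the remark following that proposition; since your order has discriminant $-7q^2<-4$ and conductor prime to $\ell\neq 2$, you could also have cited that proposition directly.
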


On the other hand, if $V$ has lava flows then the inverse volcano problem becomes more difficult, since the theory of isogeny volcanoes now fixes $\ell$ uniquely (we speak of \textit{$\ell$-volcano} in this case) and we thus only have freedom on the choice of $p$. We prove nonetheless:

\begin{theorem} \label{thm:inverse_volcano_intro}
Let $\ell \in \mathbb{N}$ be a prime number and let $V$ be an abstract $\ell$-volcano of depth $d>0$. Then there exists infinitely many primes $p\in \mathbb{Z}$ such that $V$ is a connected component of the $\ell$-isogeny graph over $\mathbb{F}_p$.
\end{theorem}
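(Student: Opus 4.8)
The plan is to realise the target abstract $\ell$-volcano $V$ as an actual connected component by exhibiting a suitable imaginary quadratic order and a prime $p$ with prescribed splitting behaviour. Recall from the structure theory reviewed in the first part that an $\ell$-volcano attached to an ordinary curve over $\mathbb{F}_p$ is governed by an imaginary quadratic order $\Or_0$ of discriminant $\Delta_0$ (the one corresponding to the crater) together with the Frobenius order $\Z[\pi]$, where $\pi\pi' = p$ and $\mathrm{Tr}(\pi)^2 - 4p = \ell^{2h}\Delta_0 v^2$ with $\ell\nmid v$; the depth $d$ of the volcano equals $h$, the crater length equals the order of the class of $\mathfrak{l}$ (a prime above $\ell$) in $\mathrm{Cl}(\Or_0)$, and the branching is uniform and equal to $1 + \left(\frac{\Delta_0}{\ell}\right)$ at the crater and $\ell$ below. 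So given $V$ with depth $d>0$ and crater length $L$, I first choose an imaginary quadratic order $\Or_0$ of discriminant $\Delta_0$ such that: the splitting type of $\ell$ in $\Or_0$ matches the required branching at the crater (ramified, split, or inert according to whether $V$ has branching $1$, $2$, or the degenerate one-vertex crater case), and a prime $\mathfrak{l}\mid\ell$ has order exactly $L$ in $\mathrm{Cl}(\Or_0)$. The existence of such an order is where the first real work lies: one needs a quadratic field whose class group contains an element of the prescribed order realised by a prime above $\ell$ with the prescribed splitting — this can be arranged using genus theory / the surjectivity of Artin's map together with the fact that there are infinitely many discriminants in each ``$\ell$-behaviour'' class, but it must be checked carefully, and the pathological $j$-invariants $0$ and $1728$ must be excluded by a further congruence condition on $\Delta_0$.

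Once $\Or_0$ is fixed, the next step is to produce the prime $p$. I want $p$ to be the norm of an element $\pi$ in the order $\Or$ of discriminant $\ell^{2d}\Delta_0$, with $\pi$ primitive in $\Or$ (so that $\Z[\pi] = \Or$, forcing the conductor gap between Frobenius and the crater order to be exactly $\ell^d$ and hence the volcano to have depth exactly $d$), and with $p$ split in $\Or_0$ so that the curve is ordinary and the component is a genuine volcano rather than a single isolated structure. Concretely this means solving $x^2 - \ell^{2d}\Delta_0 y^2 = 4p$ (or the relevant form depending on parities) with $\ell\nmid y$; equivalently, $p$ should be represented by the principal form of discriminant $\ell^{2d}\Delta_0$ but by no form of any larger $\ell$-power conductor. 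By Chebotarev applied to the ring class field $H_{\Or}$ of the order $\Or$ — and excluding the thin set of primes that happen to also be represented by the principal form of discriminant $\ell^{2(d+1)}\Delta_0$, which correspond to a proper sub-extension — there are infinitely many such $p$, and they are exactly the primes $p$ that split completely in $H_{\Or}$ but not in $H_{\Or'}$ where $\Or'$ has discriminant $\ell^{2(d+1)}\Delta_0$. This gives infinitely many admissible $p$.

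With $\Or_0$, $\Or$ and $p$ in hand, the final step is to verify that the connected component of the $\ell$-isogeny graph over $\mathbb{F}_p$ containing (the reduction of) the CM curve with endomorphism ring $\Or$ is isomorphic to $V$. For this I invoke the structural description recalled earlier: the vertices at level $i$ below the crater ($0\le i\le d$) are the $j$-invariants of curves over $\mathbb{F}_p$ with endomorphism ring the order of conductor $\ell^i$ inside $\Or_0$; the crater is a cycle of length $L = \mathrm{ord}_{\mathrm{Cl}(\Or_0)}(\mathfrak{l})$; the map from level $i$ to level $i-1$ is $\ell$-to-$1$ for $i\ge 2$ and $\left(\ell - \left(\frac{\Delta_0}{\ell}\right)\right)$-to-$1$ from level $1$ to the crater, so that every crater vertex is the root of an identical $\ell$-ary tree of height $d$; and the whole component is regular of degree $\ell+1$ except possibly at the crater. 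These are precisely the combinatorial axioms defining an abstract $\ell$-volcano of depth $d$ in Definition \ref{volcano:graph}, so the component is isomorphic to $V$ provided the numerical invariants $(\Delta_0, L, d)$ were matched in the first step; one checks that no two distinct abstract $\ell$-volcanoes share the same triple. The main obstacle is the first step: simultaneously controlling the splitting of $\ell$, the order of $\mathfrak{l}$ in the class group, and avoiding $j\in\{0,1728\}$, all while keeping enough discriminants available so that the subsequent Chebotarev argument still yields infinitely many primes $p$; this is an effective-Chebotarev / class-field-theory bookkeeping problem rather than a conceptual one, but it is the crux.
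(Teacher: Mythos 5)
Your overall architecture matches the paper's: fix an imaginary quadratic order realising the crater, use Chebotar\"ev on the ring class fields of $\mathbb{Z}[\ell^k\sqrt{D}]$ to find infinitely many primes $p$ with $4p=t^2-\ell^{2d}v^2 D$ and $\ell\nmid v$, and then invoke the structure theory to identify the component. Your second and third steps are essentially Proposition \ref{prop:abstract_crater_realizable} and its proof, and they are fine.

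The genuine gap is in your first step, which you yourself flag as ``the crux'' but then dismiss as ``class-field-theory bookkeeping.'' It is not. Because $d>0$ forces $\ell$ to be \emph{fixed} in advance, you must produce, for a given prime $\ell$ and a given integer $n$, an imaginary quadratic order of discriminant $<-4$ in which $\ell$ splits into two prime ideals whose classes have order \emph{exactly} $n$. Genus theory only controls $2$-torsion, and the surjectivity of the Artin map (equivalently, Chebotar\"ev inside a fixed field) lets you find \emph{some} split prime in a prescribed ideal class of a field you have already chosen --- it does not let you prescribe that the rational prime below it is your fixed $\ell$. There is no ``vertical'' Chebotar\"ev over varying discriminants that delivers this. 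The paper has to work for it: Theorem \ref{thm:prime_given_order_intro} constructs the explicit families $\mathbb{Q}(\sqrt{1-2^{n+2}})$, $\mathbb{Q}(\sqrt{1-\ell^{n}})$ and $\mathbb{Q}(\sqrt{1-4\ell^{n}})$, where $\ell^n$ (or $4\ell^n$) is a norm so that $\mathfrak{p}_\ell^n$ is principal, and then rules out $\mathfrak{p}_\ell^{n/q}$ being principal for every prime $q\mid n$ via Nagell- and Mahler-type analyses of the resulting Diophantine equations (including a Pell equation $U^2-DV^2=-A$ and several parity/congruence eliminations), together with ad hoc treatment of the exceptional pairs such as $(\ell,n)=(2,4)$ via $\mathbb{Q}(\sqrt{-39})$ and $(\ell,n)=(3,5)$ via $\mathbb{Q}(\sqrt{-971})$. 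Until you supply an argument of this kind, your proof of the existence of the order $\mathcal{O}_0$ with a prime above $\ell$ of exact order $L$ in $\mathrm{Cl}(\mathcal{O}_0)$ --- and hence the whole theorem for craters that are cycles of length $\geq 3$ --- is missing.
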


The proofs of Theorems \ref{thm:abstract_crater_depth0_intro} and \ref{thm:inverse_volcano_intro} feature a study of elements in the class group of imaginary quadratic fields: in a nutshell, volcanoes with crater size $n$ exist because one can find ideal classes in well-chosen imaginary quadratic fields with order $n$. To study these questions on orders of elements in class groups, we are lead to study and explicitly solve some diophantine equations. Using variations of arguments of Nagell, Mahler, Pell, one is able to prove the following key step:

\begin{theorem} \label{thm:prime_given_order_intro}
The following properties hold.
\begin{enumerate}
\item Let $n \neq 4$ be a positive integer and let $K=\mathbb{Q}(\sqrt{1-2^{n+2}})$. Then in $\mathcal{O}_K$ the prime $2$ splits into two prime ideals whose corresponding classes in $\mathrm{Cl}(\mathcal{O}_K)$ have order $n$.
\item Let $K=\mathbb{Q}(\sqrt{-39})$. Then in $\mathcal{O}_K$ the prime $2$ splits into two prime ideals whose corresponding classes in $\mathrm{Cl}(\mathcal{O}_K)$ have order $4$.
\item Let $\ell \in \mathbb{Z}$ be an odd prime and let $n \in \mathbb{Z}_{>0}$. Define $K_1:=\mathbb{Q}(\sqrt{1-\ell^{n}})$ and $K_2:=\mathbb{Q}(\sqrt{1-4\ell^{n}})$. Then either in $\mathcal{O}_{K_1}$ or in $\mathcal{O}_{K_2}$ the prime $\ell$ splits into two prime ideals whose corresponding classes in $\mathrm{Cl}(\mathcal{O}_{K_i})$ have order $n$.
\end{enumerate}
\end{theorem}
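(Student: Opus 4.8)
The plan is to realize $n$ as the order of a prime ideal class by forcing the principal ideal $(\ell)^n$ — or rather the ideal $\mathfrak{l}^n$ where $\mathfrak{l}$ is a prime above $\ell$ — to be generated by an element of small norm, and then checking no smaller power already becomes principal. Concretely, write $\beta = \frac{1+\sqrt{1-\ell^n}}{2}$ (or $\beta = \sqrt{1-4\ell^n}$, depending on congruences mod $4$), so that $N(\beta) = \ell^n$ up to sign and the ideal $(\beta)$ has norm $\ell^n$; since $\ell \nmid 1-\ell^n$, the prime $\ell$ does not ramify, and the factorization of $(\beta)$ into prime ideals above $\ell$ must be $\mathfrak{l}^n$ for one of the two primes $\mathfrak{l}, \overline{\mathfrak{l}}$ (it cannot mix $\mathfrak{l}$ and $\overline{\mathfrak{l}}$, else $\beta$ would be divisible by $\ell$). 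First I would check that $\ell$ actually splits in the relevant $K_i$ (equivalently, that the discriminant is a square mod $\ell$, which follows from $1 - \ell^n \equiv 1 \pmod \ell$), treating the cases $1-\ell^n \equiv 1 \pmod 4$ and $\equiv 3 \pmod 4$ to decide whether to use $K_1$ or $K_2$; this dichotomy is exactly why the statement offers two fields and claims the conclusion for at least one of them. This gives $[\mathfrak{l}]^n = 1$ in $\mathrm{Cl}(\mathcal{O}_{K_i})$, so the order $m$ of $[\mathfrak{l}]$ divides $n$.

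The substantive part is ruling out $m < n$, i.e. showing $\mathfrak{l}^m$ is not principal for any proper divisor $m$ of $n$. Suppose $\mathfrak{l}^m = (\gamma)$ with $N(\gamma) = \pm\ell^m$; writing $\gamma = \frac{x + y\sqrt{D}}{2}$ with $D = 1-\ell^n$ (or the analogous shape in $K_2$) and taking norms yields a Diophantine equation of the form $x^2 - D y^2 = \pm 4\ell^m$, that is
\begin{equation}\label{eq:diophantine-key}
x^2 + (\ell^n - 1) y^2 = \pm 4\ell^m, \qquad 0 < m < n.
\end{equation}
For $y \neq 0$ the left side is at least $\ell^n - 1 > \ell^m$ for $\ell \geq 3$ and $m \le n-1$, while a short case analysis at $y = 0$ forces $x^2 = \pm 4\ell^m$, impossible since $m < n$ and we would need $m$ even and $\ell^{m}$ a perfect square giving $\mathfrak{l}^m = \overline{\mathfrak{l}}^{m}$ — contradicting that $\ell$ splits and $\mathfrak{l} \neq \overline{\mathfrak{l}}$ unless $m = 0$. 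The analogous inequality $4\ell^n - 1 > 4\ell^m$ handles $K_2$. The hard part will be making this elementary size estimate completely airtight across the small cases — small $n$, the interaction with units (here $\mathcal{O}_{K_i}^\times = \{\pm 1\}$ since $D < -4$, which removes the unit ambiguity but must be invoked explicitly), and the subtle point that $\gamma$ and $\gamma \cdot (\text{unit})$ generate the same ideal so it genuinely suffices to bound $N(\gamma)$ from below — and in checking that the two prime ideals above $\ell$ are genuinely distinct and conjugate so that their classes are mutually inverse and share the same order.

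For parts (1) and (2) I would specialize: with $\ell = 2$, $D = 1 - 2^{n+2}$, the same argument gives $(\sqrt{D}) = \mathfrak{2}^{?}$ — but now one must be careful because $2$ behaves differently, and $D = 1 - 2^{n+2} \equiv 1 \pmod 8$ when $n \geq 1$, so $2$ splits in $\mathcal{O}_K$; the norm equation becomes $x^2 + (2^{n+2}-1)y^2 = \pm 2^{k+2}$, and the exceptional case $n = 4$ arises precisely because $D = 1 - 2^6 = -63 = -9 \cdot 7$ is not squarefree, forcing a descent to $\mathbb{Q}(\sqrt{-7})$ where $2$ has smaller order — this is why $n = 4$ is excluded and replaced by the separate field $\mathbb{Q}(\sqrt{-39})$ in part (2), which I would verify by a direct computation of $\mathrm{Cl}(\mathbb{Q}(\sqrt{-39})) \cong \mathbb{Z}/4\mathbb{Z}$ and exhibiting a prime above $2$ as a generator. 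Throughout, the one place requiring genuine care rather than bookkeeping is the squarefree-part analysis of $D$: $1-\ell^n$ need not be squarefree, so $\mathcal{O}_{K_i}$ may be strictly larger than $\mathbb{Z}[\sqrt{D}]$ and $\mathfrak{l}$ could conceivably be principal in the maximal order even when the naive equation has no solution — I would address this by working with the norm form of the full ring of integers from the start and by noting that a square factor $f^2 \mid 1-\ell^n$ with $\gcd(f,\ell)=1$ does not affect the splitting or the order computation, which is the technical heart of the argument.
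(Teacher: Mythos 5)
Your reduction to a norm equation is the right starting point and matches the paper's setup (splitting of $\ell$, the coprime conjugate ideals $\mathfrak{a},\overline{\mathfrak{a}}$ with $\mathfrak{a}\overline{\mathfrak{a}}=\ell^n$, order of $[\mathfrak{l}]$ dividing $n$), but the Diophantine step that carries all the weight is broken. You write the norm form as $x^2+(\ell^n-1)y^2=\pm 4\ell^m$ and dispose of $y\neq 0$ by the size estimate $\ell^n-1>4\ell^m$. However, $1-\ell^n$ is in general not squarefree: writing $1-\ell^n=-Ax_0^2$ with $A$ squarefree, the field is $\mathbb{Q}(\sqrt{-A})$ and the norm form of its ring of integers is $\frac{u^2+Av^2}{4}$ with coefficient $A=(\ell^n-1)/x_0^2$, which can be tiny. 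Your closing claim that a square factor prime to $\ell$ ``does not affect the splitting or the order computation'' is false for the order: for $(\ell,n)=(3,5)$ one has $1-3^5=-2\cdot 11^2$, so $K_1=\mathbb{Q}(\sqrt{-2})$ has class number $1$ and the primes above $3$ are principal of order $1$, not $5$; likewise $1-2^{6}=-7\cdot 3^2$ lands in $\mathbb{Q}(\sqrt{-7})$, which is exactly the excluded case $n=4$ of part (1) --- a phenomenon you correctly observe there but which contradicts the assertion you use to dismiss the squarefree issue everywhere else. These exceptions are precisely why the theorem offers two candidate fields in part (3) and a replacement field in part (2), and they show that no size estimate on the naive form $x^2+(\ell^n-1)y^2$ can close the argument.

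What the paper actually does (Propositions \ref{prop:prime_given_order_2}, \ref{prop:prime_given_order_l} and \ref{prop:prime_given_order_l_part_2}) is substantially harder: assuming the order of the class is a proper divisor of $n$, one obtains an identity of elements $\left(\frac{u+v\sqrt{-A}}{2}\right)^q=\pm\frac{1+x\sqrt{-A}}{2}$ (or the variant without the denominator) for some prime $q\mid n$, after checking $\mathcal{O}_K^\times=\{\pm 1\}$. For odd $q$ the rational part is expanded binomially, reduced modulo $q$ to force $u=1$, and the resulting Pell-type equation $U^2-DV^2=-A$ is treated with Mahler's theorem \cite[Theorem 16]{Nagell_1955} (and Nagell's \cite[Theorem 25]{Nagell_1955}), which enumerates its solutions and isolates the finitely many exceptional pairs $(\ell,n)$; for $q=2$ an elementary factorization such as $(u-1)(u+1)=2^{n/2+1}$ finishes. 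To repair your proof you would need to import this machinery (or an equivalent) in place of the size estimate; the rest of your setup is sound and coincides with the paper's.
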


To obtain a proof of Theorem \ref{thm:prime_given_order_intro}, we prove Propositions \ref{prop:prime_given_order_2},  \ref{prop:prime_given_order_l}, and \ref{prop:prime_given_order_l_part_2}. We are in fact more precise and are able to decide in the third item which of $K_1$ or $K_2$ is the correct field to consider, for a given pair $(\ell,n)$.

In the final section, we discuss two follow-up projects: the first one concerns the inverse volcano problem over more general finite fields. We prove in Proposition \ref{non_inverse_over_F_p^2} the existence of abstract 2-volcanoes such that for any prime $p\neq 2$, these are not connected components of ordinary isogeny graphs over $\mathbb{F}_{p^2}$. This triggers natural questions: how many exceptions can occur? Infinitely many or not? Do they come up with a specific shape? We plan to come back to these questions in future work.

The second project focuses on solving the inverse volcano problem with algorithmic efficiency: we provide an explicit solution to the inverse volcano problem over $\mathbb{F}_p$, but when working with concrete examples, it appears that one is often able to find number fields with smaller discriminants than the ones in our families. Is it possible to describe a solution with minimal discriminant? What is the smallest prime $p$ that realises an abstract volcano as an isogeny volcano over $\mathbb{F}_p$?

We also present a detailed study of the ordinary $3$-isogeny graph over $\mathbb{F}_{1009}$ in an appendix, including a complete picture of the corresponding volcano park \footnote{All the heavy computations in this paper have been performed using the SageMath software~\cite{sagemath}}. We encourage the reader to take a look at this volcano park before starting, it is very beautiful!

\section*{Acknowledgements}
The authors thank the IRN GandA (CNRS) and IRN MaDeF (CNRS) for support. HB thanks the University of Copenhagen for its hospitality. FP is supported by ANR-17-CE40-0012 Flair, FC and FP are both supported by ANR-20-CE40-0003 Jinvariant. FC is grateful to Max Planck Institute for Mathematics in Bonn for its hospitality and financial support. The three authors warmly thank Teresa Sorbera for helping in organising the workshop in R\o m\o, where the whole project started.

\section{Preliminaries}

We gather general results on isogenies of elliptic curves and modular polynomials in this first section.

\subsection{Isogenies of elliptic curves}

Let $k$ be either a finite field or a number field (which we suppose to be embedded in $\mathbb{C}$). By $\ell$-isogeny between two elliptic curves over $k$ we mean an isogeny of degree $\ell$ defined over $\overline{k}$. An elliptic curve $E/k$ has complex multiplication by a ring $R$ strictly containing $\mathbb{Z}$ if $\mathrm{End}_{\overline{k}}(E) \cong R$. An elliptic curve over a finite field is ordinary if $R$ is an imaginary quadratic order. It is supersingular if $R$ is an order in a quaternion algebra.

\begin{proposition}\label{isogeny:type}
Let $\varphi : E_1 \rightarrow E_2$ be an $\ell$-isogeny of ordinary elliptic curves over $k$ with geometric endomorphism rings $\Or_1$ and $\Or_2$. If $\mathrm{char}(k) \neq \ell$ then $[\Or_1:\Or_2]\in\{1/\ell,1,\ell\}$.
\end{proposition}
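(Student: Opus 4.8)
The plan is to pass to the analytic picture over $\mathbb{C}$, where endomorphism rings of CM elliptic curves are lattices in an imaginary quadratic field $K$, and then track how an $\ell$-isogeny affects the associated orders. First I would reduce to the case $k = \mathbb{C}$: since $\mathrm{char}(k) \neq \ell$, the $\ell$-isogeny $\varphi$ is separable, and the geometric endomorphism rings $\Or_1, \Or_2$ are preserved under the relevant reductions/liftings (for a finite field $k$ this uses the Deuring lifting theorem; for a number field we simply base change to $\mathbb{C}$). Both $\Or_1$ and $\Or_2$ are then orders in the same imaginary quadratic field $K = \Or_i \otimes \mathbb{Q}$, because an $\ell$-isogeny induces an isomorphism on $H_1(-,\mathbb{Q})$ and hence identifies $\Or_1 \otimes \mathbb{Q} \cong \Or_2 \otimes \mathbb{Q}$.

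Next I would set up the lattice computation. Write $E_i \cong \mathbb{C}/\Lambda_i$ with $\Lambda_i$ a fractional ideal for $\Or_i$, and realise $\varphi$ as multiplication by some $\alpha \in \mathbb{C}^\times$ with $\alpha \Lambda_1 \subseteq \Lambda_2$ and $[\Lambda_2 : \alpha\Lambda_1] = \ell$. The orders are recovered as $\Or_i = \{x \in K : x\Lambda_i \subseteq \Lambda_i\}$. The key step is to compare $\Or_1$ and $\Or_2$ as subrings of $K$: I would show that $\Or_1 \cap \Or_2$ has index dividing $\ell$ in each of $\Or_1$ and $\Or_2$. Concretely, if $x \in \Or_1$ then $x(\alpha\Lambda_1) = \alpha(x\Lambda_1) \subseteq \alpha\Lambda_1$, and since $\alpha\Lambda_1 \subseteq \Lambda_2$ has index $\ell$, one gets $\ell x \Lambda_2 \subseteq \alpha\Lambda_1 \subseteq \Lambda_2$, so $\ell x \in \Or_2$; symmetrically $\ell\Or_2 \subseteq \Or_1$. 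Hence $\ell\Or_1 \subseteq \Or_2$ and $\ell \Or_2 \subseteq \Or_1$. Since any two orders in $K$ containing a common order are comparable up to the conductor structure — more precisely, orders in $K$ are totally ordered by inclusion only within a fixed conductor lattice $\mathbb{Z} + f\Or_K$ — the relations $\ell\Or_1 \subseteq \Or_2$ and $\ell\Or_2 \subseteq \Or_1$ force $\Or_2 \subseteq \Or_1$ or $\Or_1 \subseteq \Or_2$, with the index in the containment dividing $\ell$; as $\ell$ is prime the index is $1$ or $\ell$, giving $[\Or_1 : \Or_2] \in \{1/\ell, 1, \ell\}$.

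To make the last deduction clean I would use conductors: write $\Or_i = \mathbb{Z} + f_i \Or_K$. From $\ell \Or_1 \subseteq \Or_2$ one gets $\ell f_2 \mid \ell f_1$ after comparing the $\Or_K$-components... more carefully, $\Or_1 \subseteq \Or_2$ iff $f_2 \mid f_1$, and the index is then $f_1/f_2$. The containments $\ell\Or_1 \subseteq \Or_2 \subseteq \tfrac{1}{\ell}\Or_1$ translate to $f_2 \mid \ell f_1$ and $f_1 \mid \ell f_2$, whence $f_1/f_2 \in \{\ell, 1, 1/\ell\}$, which is exactly the claim. I expect the main obstacle to be the bookkeeping in the reduction step over a finite field: one must be careful that the geometric endomorphism ring of an ordinary curve over $\mathbb{F}_q$ matches that of its canonical lift, and that $\ell$-isogenies lift compatibly — this is standard (Deuring, Lang's theorem on ordinary curves) but needs to be invoked precisely rather than waved at. The lattice index computation itself is routine once the setup is in place.
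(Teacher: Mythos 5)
The paper offers no argument of its own for this proposition: its ``proof'' is a citation to Kohel's Proposition~21 and to Sutherland. Your proof is correct, but it takes a genuinely different route from the standard one being cited. You pass to the analytic model over $\C$ and run the computation on lattices: from $[\Lambda_2:\alpha\Lambda_1]=\ell$ you get $\ell\Lambda_2\subseteq\alpha\Lambda_1$, hence $\ell x\Lambda_2\subseteq\Lambda_2$ for $x\in\Or_1$ and symmetrically, so $\ell\Or_1\subseteq\Or_2$ and $\ell\Or_2\subseteq\Or_1$; the conductor bookkeeping $f_2\mid \ell f_1$, $f_1\mid \ell f_2$ then correctly forces $f_1/f_2\in\{1/\ell,1,\ell\}$. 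All of these steps check out. The standard proof (Kohel, Sutherland) obtains the same two inclusions $\ell\Or_1\subseteq\Or_2$ and $\ell\Or_2\subseteq\Or_1$ \emph{directly over $k$}, by sending $\psi\in\en(E_1)$ to $\varphi\circ\psi\circ\widehat{\varphi}\in\en(E_2)$, which equals $\ell\psi$ inside the common quadratic algebra since $\varphi\widehat{\varphi}=[\ell]$; it then finishes with the same conductor argument. What that buys is the complete elimination of your reduction step: no Deuring lifting, no lifting of the kernel, no need to check that the endomorphism ring of the lifted quotient curve reduces onto $\Or_2$. That reduction step is exactly where the remaining work in your write-up sits --- you correctly flag it, and it can be made rigorous (lift $E_1$ to its canonical lift, lift $\ker\varphi$ \'etale-locally since $\ell\neq p$, and use that ordinary reduction preserves endomorphism rings of conductor prime to $p$, cf.\ Lemma~\ref{lem:conductor_not_divisible_by_p}) --- but it is machinery the dual-isogeny argument simply never needs. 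If you want a self-contained proof to include, I would recommend replacing the passage to $\C$ by the $\varphi\psi\widehat{\varphi}$ computation and keeping your conductor argument verbatim as the concluding step.
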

\begin{proof}
See~\cite[Proposition 21]{Koh96} or~\cite[\textsection 2.7]{Sut13}.
\end{proof}

\begin{definition}
Using the notations of Proposition~\ref{isogeny:type}, we say that $\varphi$ is \emph{horizontal} if $\Or_1=\Or_2$. Otherwise $\varphi$ is \emph{vertical}; \emph{descending} if $[\Or_1:\Or_2]=\ell$, \emph{ascending} if $[\Or_2:\Or_1]=\ell$. 
\end{definition}

\begin{remark}
The dual of a horizontal isogeny is horizontal, and the dual of a vertical isogeny is vertical, ascending if the initial isogeny was descending, and vice versa.
\end{remark}

\begin{definition}
Let $\mathcal{O}$ be an imaginary quadratic order and let $E/k$ be an elliptic curve with $\mathrm{End}_{\overline{k}}(E)\cong \mathcal{O}$. For every $\Or$-ideal $\mathfrak{a}$ the $\mathfrak{a}$\emph{-torsion subgroup} of $E$ is defined as 
\[
E[\mathfrak{a}] := \bigcap_{\alpha\in \mathfrak{a}} \Ker{\alpha} \subseteq E(\overline{k}).
\]
\end{definition}

The $\mathfrak{a}$-torsion subgroups correspond to isogenies $\varphi_{\mathfrak{a}}:E\rightarrow E/E[\mathfrak{a}]$ with kernel $E[\mathfrak{a}]$ and degree $\dg(\varphi_{\mathfrak{a}})=N(\mathfrak{a})$. Here, $E/E[\mathfrak{a}]$ denotes the quotient elliptic curve of $E$ by the subgroup $E[\mathfrak{a}]$. If $\mathfrak{a}$ is an invertible ideal of $\en_{\overline{k}}(E)$, then $\en_{\overline{k}}(E/E[\mathfrak{a}]) \cong \en_{\overline{k}}(E)$ (see \cite[Proposition 3.9 and Theorem 4.5]{Wat69}).

We thus obtain an action of the group of invertible $\mathcal{O}$-ideals on the set of $\overline{k}$-isomorphism classes of elliptic curves with complex multiplication by $\mathcal{O}$, given by
\[
\mathfrak{a} \ast E := E/E[\mathfrak{a}].
\]
This action factors via a faithful and transitive action of the class group $\mathrm{Cl}(\mathcal{O})$ of the order $\mathcal{O}$. If $k$ is a number field, we can write $E(\mathbb{C}) \cong \mathbb{C}/\Lambda$ where $\Lambda \subseteq K:=\mathrm{Frac}(\mathcal{O})$ is an invertible ideal, and then one has
\[
(E/E[\mathfrak{a}])(\mathbb{C}) \cong \mathbb{C}/\mathfrak{a}^{-1}\Lambda.
\]
We then see that the above action corresponds to the classical one over complex numbers described for instance in \cite[II, Proposition 1.2]{Silverman_advanced} for maximal orders. We summarise this discussion in the following theorem.

\begin{theorem} \label{thm:main_theorem_CM}
Let $K$ be an imaginary quadratic field, and let $\mathcal{O}$ be an order in $K$. Then, for every ideal class $[\mathfrak{a}] \in \mathrm{Cl}(\mathcal{O})$ and every elliptic curve $E/\overline{k}$ with $\mathrm{End}_{\overline{k}}(E)\cong \mathcal{O}$ the association
\[
\mathfrak{a} \ast E := E/E[\mathfrak{a}]
\]
defines a faithful and transitive action of the class group on the set of $\overline{k}$-isomorphism classes of elliptic curves with complex multiplication by $\mathcal{O}$.
\end{theorem}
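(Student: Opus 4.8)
The plan is to separate the statement into the three assertions it really contains: that the rule $\mathfrak{a}\ast E:=E/E[\mathfrak{a}]$ descends from the monoid of invertible $\mathcal{O}$-ideals to a well-defined operation on $\mathrm{Cl}(\mathcal{O})$, that the resulting action is faithful, and that it is transitive. That invertible ideals act at all---that $E/E[\mathfrak{a}]$ again has complex multiplication by $\mathcal{O}$, and that the operation is compatible with multiplication of ideals---has already been recorded in the discussion above using the cited results of Waterhouse \cite{Wat69}, so I would take that for granted and concentrate on the three points. The descent and the faithfulness I would handle by short direct arguments that work uniformly in $k$; for transitivity I would fall back on the complex-analytic model when $k$ is a number field, and on Deuring's reduction theory together with Waterhouse's theory of kernel ideals when $k$ is finite. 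I expect transitivity in characteristic $p$ to be the only genuinely delicate point.

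First I would check that principal ideals act trivially, which is what makes the action factor through $\mathrm{Cl}(\mathcal{O})$. If $\mathfrak{a}=\alpha\mathcal{O}$ for some nonzero $\alpha\in\mathcal{O}$, then, viewing $\alpha$ inside $\mathrm{End}_{\overline{k}}(E)=\mathcal{O}$, one has $E[\mathfrak{a}]=\bigcap_{\beta\in\mathcal{O}}\ker(\beta\alpha)=\ker\alpha$, and the isogeny $\alpha\colon E\to E$ factors as an isomorphism $E/\ker\alpha\xrightarrow{\sim}E$; hence $\alpha\mathcal{O}\ast E\cong E$. For faithfulness, suppose $\mathfrak{a}\ast E\cong E$ with $\mathfrak{a}$ invertible. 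Composing $\varphi_{\mathfrak{a}}\colon E\to E/E[\mathfrak{a}]$ with a chosen isomorphism $E/E[\mathfrak{a}]\xrightarrow{\sim}E$ produces $\psi\in\mathrm{End}_{\overline{k}}(E)=\mathcal{O}$ with $\ker\psi=E[\mathfrak{a}]$ and $\deg\psi=\deg\varphi_{\mathfrak{a}}=N(\mathfrak{a})$. Every $\gamma\in\mathfrak{a}$ annihilates $E[\mathfrak{a}]=\ker\psi$ and therefore factors through $\psi$, so $\gamma\in\psi\mathcal{O}$ and $\mathfrak{a}\subseteq(\psi)$. Comparing indices in $\mathcal{O}$ gives $N((\psi))=|N_{K/\mathbb{Q}}(\psi)|=\deg\psi=N(\mathfrak{a})$, and combined with $\mathfrak{a}\subseteq(\psi)$ this forces $\mathfrak{a}=(\psi)$; thus $[\mathfrak{a}]$ is trivial and the action is faithful.

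For transitivity, let $E,E'$ both have complex multiplication by $\mathcal{O}$. When $k$ is a number field I would embed $\overline{k}$ in $\mathbb{C}$, write $E(\mathbb{C})\cong\mathbb{C}/\Lambda$ and $E'(\mathbb{C})\cong\mathbb{C}/\Lambda'$ with $\Lambda,\Lambda'$ invertible $\mathcal{O}$-ideals, and take $\mathfrak{a}$ to be an integral ideal in the class of the invertible fractional ideal $\Lambda(\Lambda')^{-1}$; then $(E/E[\mathfrak{a}])(\mathbb{C})\cong\mathbb{C}/\mathfrak{a}^{-1}\Lambda\cong\mathbb{C}/\Lambda'$, so $\mathfrak{a}\ast E\cong E'$, which is the classical statement \cite[II, Proposition 1.2]{Silverman_advanced} (extended from maximal orders to an arbitrary $\mathcal{O}$). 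When $k$ is a finite field of characteristic $p$, so $E,E'$ are ordinary, I would invoke Deuring's reduction theorem: the set of such curves over $\overline{\mathbb{F}_p}$ is identified, $\mathrm{Cl}(\mathcal{O})$-equivariantly, with the characteristic-zero set just treated, and transitivity carries over. Equivalently and more directly, one connects $E$ to $E'$ by an isogeny and realises its kernel as $E[\mathfrak{a}]$ for the associated invertible kernel ideal $\mathfrak{a}\subseteq\mathcal{O}$, using Waterhouse's results \cite{Wat69}. The hard part---and the place where these structural theorems are genuinely needed---is exactly this: that every horizontal isogeny between curves with complex multiplication by $\mathcal{O}$ arises from an invertible $\mathcal{O}$-ideal, equivalently that kernel subgroups exhaust the horizontal isogenies. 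Granting it, transitivity follows, and together with the faithfulness established above this proves the theorem.
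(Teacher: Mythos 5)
Your proposal is correct, and at the only genuinely hard point it lands on the same sources the paper uses: the paper's entire proof is the two citations ``\cite[Chapters 8 and 10]{Lan87}'' for number fields and ``\cite[Theorem 4.5]{Wat69}'' for finite fields, whereas you supply explicit arguments for the descent to $\mathrm{Cl}(\mathcal{O})$ (principal ideals act trivially) and for faithfulness, and defer only transitivity to the lattice model resp.\ Deuring--Waterhouse. What your version buys is transparency: the reader sees exactly which part of the theorem is formal (well-definedness, faithfulness via the kernel-ideal comparison $\mathfrak{a}\subseteq(\psi)$ with $N(\mathfrak{a})=N((\psi))$) and which part genuinely needs the structural input that every horizontal isogeny has kernel of the form $E[\mathfrak{a}]$ for an invertible $\mathfrak{a}$ --- a fact the paper itself only establishes later, in Lemma \ref{lem:invertible}(2), again via \cite{Wat69}. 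One small technical caveat in your faithfulness step: to factor $\gamma$ through $\psi$ from the inclusion $\ker\psi\subseteq\ker\gamma$ of subgroups of $E(\overline{k})$ you need $\psi$ separable, which in characteristic $p$ is not automatic for arbitrary $\mathfrak{a}$; since $E$ is ordinary and every class contains an invertible ideal of norm prime to $p$, you can harmlessly reduce to that case (or work with scheme-theoretic kernels as Waterhouse does), but the reduction should be said. With that remark added, your argument is a complete and somewhat more self-contained rendering of the paper's proof-by-citation.
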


\begin{proof}
If $k$ is a number field the statement is proved in~\cite[Chapters 8 and 10]{Lan87}. If $k$ is a finite field, the statement is proved in \cite[Theorem 4.5]{Wat69}.
\end{proof}

\begin{definition}\label{def:equivalent}
Let $E$ and $E'$ be elliptic curves over $k$. We say that two $\ell$-isogenies $\varphi, \psi: E \to E'$ are equivalent if $\ker \varphi = \ker \psi$. This is the same as requiring that there exists $\alpha \in \mathrm{Aut}_{\overline{k}}(E')$ such that $\alpha \circ \varphi=\psi$.
\end{definition} 
 
 \begin{lemma} \label{lem:conductor_not_divisible_by_p}
 Let $k$ be a finite field of characteristic $p$ and let $E/k$ be an elliptic curve with geometric endomorphism ring isomorphic to an order $\mathcal{O}$ in an imaginary quadratic field $K$. Denote by $f$ the conductor of $\mathcal{O}$. Then $p$ does not divide $f$.
 \end{lemma}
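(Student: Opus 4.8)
The plan is to control $f$ by comparing $\Or$ with the order generated by the Frobenius endomorphism, whose conductor can be read off from an explicit discriminant.

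Since $\en_{\overline k}(E)\cong\Or$ is an imaginary quadratic order and $k$ is finite, $E$ is ordinary. Set $q=\#k$ and let $\pi\in\Or\subseteq K$ be the image, under a chosen isomorphism $\en_{\overline k}(E)\xrightarrow{\ \sim\ }\Or$, of the $q$-power Frobenius endomorphism of $E$. The characteristic polynomial of Frobenius is $X^2-aX+q$ with $a=\Tr(\pi)\in\Z$, so $\pi$ is an algebraic integer, and ordinariness means exactly that $p\nmid a$. In particular $\pi\notin\Z$ (if $\pi\in\Z$ then $q=\pi^2$ and $a=2\pi$, so $p\mid q$ would force $p\mid a$), hence $K=\Q(\pi)$ and $\Z[\pi]$ is an order in $K$.

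I would then record the chain $\Z[\pi]\subseteq\Or\subseteq\Or_K$ of orders in $K$. Comparing indices, the conductor $f=[\Or_K:\Or]$ divides $f_\pi:=[\Or_K:\Z[\pi]]$, so it is enough to show $p\nmid f_\pi$. Now $\Z[\pi]=\Z\oplus\Z\pi$ with $\pi$ of characteristic polynomial $X^2-aX+q$, hence
\[
\Disc(\Z[\pi])=a^2-4q .
\]
Because $p\mid q$ while $p\nmid a$, we get $a^2-4q\equiv a^2\not\equiv0\pmod p$, i.e. $p\nmid\Disc(\Z[\pi])$. Since $\Disc(\Z[\pi])=f_\pi^2\, d_K$ with $d_K$ the discriminant of $K$, this forces $p\nmid f_\pi$, and therefore $p\nmid f$.

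There is no real obstacle here: the single substantive input is the characterisation of ordinariness by $p\nmid\Tr(\Frob)$, and the rest is the standard conductor--discriminant bookkeeping. The one place to stay attentive is the verification that $\Z[\pi]$ genuinely is an order, i.e. that $\pi\notin\Z$, which is where ordinariness enters a second time; once that is in place the divisibility $f\mid f_\pi$ and the discriminant computation are routine.
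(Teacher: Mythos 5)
Your proof is correct, but it takes a genuinely different route from the paper. The paper argues via characteristic zero: it invokes Deuring's lifting theorem to produce a CM lift $E'$ of $E$ with CM order $\mathcal{O}'\supseteq\mathcal{O}$, uses \cite[II, Proposition 4.4]{Silverman_advanced} to see $\mathcal{O}'=\mathcal{O}$, and then reads off $p\nmid f$ from Deuring's reduction theorem \cite[Chapter 13, Theorem 12]{Lan87}. You instead stay entirely in characteristic $p$: the Frobenius order $\Z[\pi]$ sits inside $\mathcal{O}$, its discriminant $a^2-4q$ is prime to $p$ because ordinariness forces $p\nmid a$, and the divisibility $f\mid[\Or_K:\Z[\pi]]$ together with $\Disc(\Z[\pi])=f_\pi^2 d_K$ finishes the argument. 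Your version is more elementary and self-contained (no lifting needed), and it makes transparent exactly where ordinariness is used; the paper's version has the advantage of running on the same Deuring machinery it deploys immediately afterwards in Corollary \ref{cor:everything_is_here}. The one external input you should cite explicitly is the equivalence ``$E$ ordinary $\iff p\nmid\Tr(\pi)$''; this is standard (it is implicit in the paper's own use of \cite[Theorem 4.1]{Wat69}, where ordinary traces are exactly those with $p\nmid t$), and with that reference in place every step of your argument — the exclusion of $\pi\in\Z$, the chain $\Z[\pi]\subseteq\Or\subseteq\Or_K$, and the conductor--discriminant computation — is sound.
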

 
 \begin{proof}
By Deuring's lifting theorem \cite[Chapter 13, Theorem 14]{Lan87} there exists an elliptic curve $E'$ over $\overline{\mathbb{Q}}$ and a prime ideal $\mathfrak{p} \subseteq \overline{\mathbb{Q}}$ such that $E'$ has complex multiplication by an order $\mathcal{O}' \supseteq \mathcal{O}$, it has good reduction at $\mathfrak{p}$ and $E' \text{ mod } \mathfrak{p}$ is isomorphic to $E$. By \cite[II, Proposition 4.4]{Silverman_advanced} we must in fact have $\mathcal{O}'=\mathcal{O}$ and by \cite[Chapter 13, Theorem 12]{Lan87} the prime $p$ does not divide $f$.
 \end{proof}
 
 \begin{corollary} \label{cor:everything_is_here}
 Let $k$ be a finite field of characteristic $p$ and let $j(E) \in k$ be the $j$-invariant an elliptic curve $E/k$ with complex multiplication by an order $\mathcal{O}$ in the imaginary quadratic field $K$. If $h(\mathcal{O})$ denotes the class number of $\mathcal{O}$, then there are exactly $h(\mathcal{O})$ distinct elements of $k$ that are the $j$-invariants of elliptic curves with complex multiplication by $\mathcal{O}$.
 \end{corollary}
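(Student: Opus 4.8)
The plan is to deduce this from the transitivity statement in Theorem~\ref{thm:main_theorem_CM}, together with the standard fact that an ordinary elliptic curve over a finite field has all of its geometric endomorphisms already defined over the base field.

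Write $k=\mathbb{F}_q$ with $q$ a power of $p$, and let $E/k$ be the given curve, so that $\mathrm{End}_{\overline{k}}(E)\cong\mathcal{O}$. By Theorem~\ref{thm:main_theorem_CM} the set $S$ of $\overline{k}$-isomorphism classes of elliptic curves with complex multiplication by $\mathcal{O}$ is a principal homogeneous space under $\mathrm{Cl}(\mathcal{O})$, so $\#S=h(\mathcal{O})$; since two elliptic curves over $\overline{k}$ are isomorphic if and only if they share the same $j$-invariant, there are exactly $h(\mathcal{O})$ elements of $\overline{k}$ occurring as $j$-invariants of curves with CM by $\mathcal{O}$. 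The content of the corollary is therefore that all of these elements already lie in $k$.

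The key step is to show that for any invertible $\mathcal{O}$-ideal $\mathfrak{a}$ (which, up to ideal class, we may take coprime to $q$) the curve $\mathfrak{a}\ast E = E/E[\mathfrak{a}]$ admits a model over $k$, so that $j(\mathfrak{a}\ast E)\in k$; transitivity of the class group action then forces every class in $S$ to have its $j$-invariant in $k$. For this I would first recall that, $E$ being ordinary over $\mathbb{F}_q$, one has $\mathrm{End}_k(E)=\mathrm{End}_{\overline{k}}(E)=\mathcal{O}$: the absolute Galois group of $k$ acts on $\mathcal{O}$ by ring automorphisms fixing the $q$-power Frobenius $\pi_E$, and since $\pi_E\notin\mathbb{Z}$ for an ordinary curve, the only such automorphism is the identity, so this action is trivial. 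Granting this, every $\alpha\in\mathfrak{a}\subseteq\mathcal{O}$ is a $k$-rational endomorphism of $E$, hence $E[\mathfrak{a}]=\bigcap_{\alpha\in\mathfrak{a}}\Ker\alpha$ is a $k$-rational subgroup scheme and the quotient $E/E[\mathfrak{a}]$ is defined over $k$, with $j$-invariant in $k$.

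Finally I would close the count: conversely, any elliptic curve over $k$ with CM by $\mathcal{O}$ gives an element of $S$, and each of the $h(\mathcal{O})$ elements of $S$ --- now known to have $j$-invariant in $k$ --- is represented by some $E/E[\mathfrak{a}]$, which is defined over $k$ and, being ordinary, again satisfies $\mathrm{End}_k=\mathcal{O}$. Hence the elements of $k$ that are $j$-invariants of elliptic curves with complex multiplication by $\mathcal{O}$ are precisely these $h(\mathcal{O})$ values. I do not expect a genuine obstacle here; the only point that needs a reference (or the short argument sketched above) is the rationality over $\mathbb{F}_q$ of the endomorphism ring of an ordinary curve, which is classical and implicit in~\cite{Wat69}. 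Everything else is a formal consequence of Theorem~\ref{thm:main_theorem_CM}.
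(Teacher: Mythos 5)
Your proof is correct, but it takes a genuinely different route from the paper's. You stay entirely in characteristic $p$: you combine the simple transitivity of the $\mathrm{Cl}(\mathcal{O})$-action from Theorem~\ref{thm:main_theorem_CM} with the classical fact that an ordinary elliptic curve over a finite field has all of its geometric endomorphisms already defined over the base field (correctly deduced from the Frobenius $\pi_E$ not lying in $\mathbb{Z}$, so that the only ring automorphism of $\mathcal{O}$ fixing it is the identity); hence $E[\mathfrak{a}]$ is Galois-stable for $\mathfrak{a}$ chosen coprime to $q$ in its class, $\mathfrak{a}\ast E$ descends to $k$, and transitivity plus faithfulness give exactly $h(\mathcal{O})$ values of $j$, all in $k$. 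The paper instead lifts $E$ to characteristic zero via Deuring's theorem and argues with class field theory: it shows that the relevant prime has trivial decomposition group in the ring class field $H_{\mathcal{O}}$ over $\mathbb{Q}(j')$, so that every root of the Hilbert class polynomial $H_D$ reduces into $k_{\mathfrak{p}}\subseteq k$, and then uses Deuring lifting again to cap the count at $\deg H_D = h(\mathcal{O})$. Your argument is more elementary and self-contained, avoiding the characteristic-zero detour and making transparent why the $j$-invariants land in the prime-to-minimal field $k$ itself; its logical cost is that it leans on the finite-field case of Theorem~\ref{thm:main_theorem_CM} (Waterhouse's \cite[Theorem 4.5]{Wat69}), whereas the paper's proof extracts slightly more information along the way (the splitting behaviour of $p$ in $H_{\mathcal{O}}$), which is in the spirit of how ring class fields are used later in Proposition~\ref{prop:abstract_crater_realizable}. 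The only steps you should spell out in a final write-up are the existence of a representative of each invertible ideal class coprime to $q$ and to the conductor (standard, see \cite[Chapter 7]{Cox97}), and the descent of $E/E[\mathfrak{a}]$ to $k$ from the Galois-stability of $E[\mathfrak{a}]$; neither presents any obstacle.
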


\begin{proof}
By Deuring's lifting theorem \cite[Chapter 13, Theorem 14]{Lan87} and \cite[II, Proposition 4.4]{Silverman_advanced} there exists a prime $\mathcal{P} \subseteq \overline{\mathbb{Q}}$ lying above $p$ and an element $j' \in \overline{\mathbb{Q}}$ such that $j'$ is the $j$-invariant of an elliptic curve with complex multiplication by $\mathcal{O}$ and $j' \equiv j \text{ mod } \mathcal{P}$. Denoting by $D$ the discriminant of the order $\mathcal{O}$, we then have that the Hilbert class polynomial $H_D(X)$ (see \cite[pag. 285]{Cox97}) has a root modulo $p$, namely the reduction of $j'$ modulo $\mathcal{P}$. 
Let now $H_\mathcal{O}$ be the compositum over $\mathbb{Q}$ of $\mathbb{Q}(j')$ and $K$. It is well-known (see for instance \cite[Lemma 9.3]{Cox97}) that $H_\mathcal{O}$ is a Galois extension of $\mathbb{Q}$ which can only be ramified at the rational primes dividing $D$. In particular, using Lemma \ref{lem:conductor_not_divisible_by_p} and the fact that the prime $p$ is split in $K$ by \cite[Chapter 13, Theorem 12]{Lan87}, we see that the prime $p$ does not ramify in $\mathbb{Q} \subseteq H_\mathcal{O}$. Moreover, by assumption the prime $\mathfrak{p}:= \mathcal{P} \cap \mathbb{Q}(j')$ has residue field $k_\mathfrak{p}$ contained in $k$. Let $\mathfrak{P}:=\mathcal{P} \cap H_\mathcal{O}$ and denote by $D(\mathfrak{P}/\mathfrak{p}) \subseteq \mathrm{Gal}(H_\mathcal{O}/\mathbb{Q}(j'))$ the decomposition group of $\mathfrak{P}$ over $\mathfrak{p}$. For every $\sigma \in D(\mathfrak{P}/\mathfrak{p})$, the restriction $\sigma|_K$ belongs to the decomposition group of $\mathfrak{P} \cap K$ over $p$, which is trivial. Since $H_\mathcal{O}$ is the compositum of $K$ and $\mathbb{Q}(j')$, we deduce that $D(\mathfrak{P}/\mathfrak{p})$ is also trivial, so that in particular the residue degree $f(\mathfrak{P}/\mathfrak{p})=1$. Because the extension $\mathbb{Q} \subseteq H_\mathcal{O}$ is Galois, we see that all the primes in $H_\mathcal{O}$ lying above $p$ have the same residue field isomorphic to $k_\mathfrak{p}$. 

Hence, all the roots $j \in H_\mathcal{O}$ of $H_D(X)$, which are all the possible singular invariants of elliptic curves over $\overline{\mathbb{Q}}$ with complex multiplication by $\mathcal{O}$, satisfy $j \text{ mod } \mathcal{P} \in k_\mathfrak{p} \subseteq k$. By \cite[Chapter 13, Theorems 12 and 13]{Lan87} and Lemma \ref{lem:conductor_not_divisible_by_p}, this shows that there are at least $\mathrm{deg}(H_D(X))=h(\mathcal{O})$ distinct elements of $k$ that are the $j$-invariants of elliptic curves with complex multiplication by $\mathcal{O}$. By Deuring's lifting theorem there cannot be more, and this concludes the proof.
\end{proof}

\subsection{Modular polynomials}

Let $\ell \in \mathbb{N}$ be a prime. The \textit{modular polynomial of level $\ell$} is the polynomial $\Phi_\ell(X,Y) \in \mathbb{Z}[X,Y]$ characterised by the following property: for every $j \in \overline{\mathbb{Q}}$ which is the $j$-invariant of an elliptic curve $E$, the polynomial $\Phi_\ell(j,Y)$ factors as
\begin{equation} \label{eq:factorization_modular_polynomial}
    \Phi_\ell(j,Y)=\prod_{C} \left(Y-j(E/C) \right)
\end{equation}
where $C$ varies among the $\ell+1$ cyclic subgroups of order $\ell$ of $E(\overline{\mathbb{Q}})$, and $E/C$ denotes the quotient of the curve $E$ by the subgroup $C$. One can prove that $\Phi_\ell(j,Y)$ is symmetric in its variables and has degree $\ell + 1$. For more information concerning modular polynomials see \cite[Chapter 11, paragraph C]{Cox97}.

Since the polynomial $\Phi_\ell(X,Y)$ has integer coefficients, we can reduce it modulo a prime $p$, obtaining a polynomial $\widetilde{\Phi}_\ell(X,Y) \in \mathbb{F}_p[X,Y]$. It is not difficult to show that, if $p\neq \ell$, then $\widetilde{\Phi}_\ell$ satisfies the analogous property \eqref{eq:factorization_modular_polynomial} with $\overline{\mathbb{Q}}$ replaced by $\overline{\mathbb{F}}_p$. Indeed, if $\widetilde{j} \in \overline{\mathbb{F}}_p$ is the $j$-invariant of an elliptic curve $\widetilde{E}/\overline{\mathbb{F}}_p$, choose a prime $\mathfrak{p} \subseteq \overline{\mathbb{Q}}$ lying above $p$ and $j \in \overline{\mathbb{Q}}$ such that $j$ is the singular invariant of an elliptic curve $E$ reducing to $\widetilde{E}$ modulo $\mathfrak{p}$. Then, by reducing the factorization \eqref{eq:factorization_modular_polynomial} modulo $\mathfrak{p}$, we see that the roots of $\widetilde{\Phi}_\ell (\widetilde{j},Y)$ are precisely the reductions $\widetilde{j(E/C)}$ for all subgroups $C$ of order $\ell$ in $E(\overline{\mathbb{Q}})$. By Vélu's formulas \cite{Velu_1971} we have $\widetilde{j(E/C)}=j(\widetilde{E}/\widetilde{C})$, where $\widetilde{C}=C \text{ mod } \mathfrak{p}$, and, since $p\neq \ell$, \cite[VII, Proposition 3.1]{Sil09} implies that the subgroups $\widetilde{C}$ range among all possible cyclic subgroups of order $\ell$ of $\widetilde{E}(\overline{\mathbb{F}}_p)$. 

From now on, we fix $k$ to be either $\mathbb{Q}$ or $\mathbb{F}_p$ for some prime $p \neq \ell$, and we consider $\Phi_\ell$ as a polynomial defined over $k$. The affine curve $\mathcal{C} \subseteq \mathbb{A}_k^2$ described by $\Phi_\ell$ is a singular model for the modular curve $Y_0(\ell)/k$. This in particular means that every smooth point $Q\in \mathcal{C}(\overline{k})$ corresponds to an isomorphism class of pairs $(E,C)$, where $E$ is an elliptic curve over $\overline{k}$ and $C$ is a cyclic subgroup of order $\ell$ of $E$. If $\Phi_\ell(j,j')=0$, then the smooth point $(j,j') \in \mathcal{C}(\overline{k})$ represents the equivalence class of a pair $(E,C)$ with $j(E)=j$ and $j(E/C)=j'$. More generally, the roots of $\Phi_\ell(j,Y)$, counted with multiplicities, are in bijection with the isomorphism classes of pairs $(E,C)$ where $j(E)=j$ and the subgroup $C\subseteq E(\overline{k})$ is cyclic of order $\ell$. The presence of multiple roots of $\Phi_\ell(j,Y)$ comes from the existence of distinct cyclic subgroups $C, C'$ in $E$ such that $j(E/C)=j(E/C')$. This certainly occurs if there exists an automorphism $\alpha\in{\mathrm{Aut}_{\overline{k}}(E)}$ such that $\alpha(C)=C'$, which can be the case when $j=0$ or $j=1728$.

\begin{example}
This example is taken from \cite[page 238]{Sch95}. Consider the elliptic curve $E:y^2=x^3-1$ defined over $\mathbb{F}_7$. This is an ordinary elliptic curve with $j(E)=0$, whose CM order is generated over the integers by the automorphism $[\zeta_3]: (x,y) \mapsto (2x,y)$. The four cyclic subgroups of order $3$ in $E(\overline{\mathbb{F}}_7)$ are given by
\[
C_0=\langle (0,i) \rangle, \hspace{0.2cm} C_1=\langle (\sqrt[3]{4}, 2i) \rangle, \hspace{0.2cm} C_2=\langle (2 \sqrt[3]{4}, 2i) \rangle, \hspace{0.2cm} C_3=\langle (4 \sqrt[3]{4}, 2i) \rangle
\]
where $i, \sqrt[3]{4} \in \overline{\mathbb{F}}_7$ are respectively a fixed square root of $-1$ and cube root of $4$. Note that the automorphism $[\zeta_3]$ acts transitively on the subgroups $C_i$ with $i=1,2,3$. In accordance with what was mentioned above, the polynomial $\Phi_3(0,Y)=Y(Y-3)^3$ has a triple root, corresponding to the fact that $j(E/C_i)=3$ for all $i=1,2,3$.
\end{example}

\section{Ordinary isogeny graphs over \texorpdfstring{$\mathbb{F}_p$}{Fp}}
\label{sec:volc}
Let $p\geq5$ be a prime number and denote by $\mathbb{F}_p$ the prime field of characteristic $p$, with algebraic closure $\overline{\mathbb{F}}_p$. The condition on $p$ is not restrictive for our study of the inverse volcano problem and, besides, it allows us to simplify the exposition of the theory in this section (cfr. for instance the use of Waterhouse's \cite[Theorem 4.1]{Wat69} in the proof of Lemma \ref{lem:cordillera_empty}).
We want to define and study ordinary isogeny graphs over $\mathbb{F}_p$. All the constructions appearing in this section can be performed, \textit{mutatis mutandis}, over $\mathbb{F}_{p^s}$ for any $s>1$. We decided to focus only on prime fields, over which we will formulate and solve the inverse volcano problem.
 
 Let $\mathcal{V}$ be the subset of $j \in \mathbb{F}_p$ such that the elliptic curves $E/\overline{\mathbb{F}}_p$ with $j(E)=j$ are ordinary.
For any $j \in \mathcal{V}$, let $\mathcal{E}_j$ be the set of $\mathbb{F}_{p^2}$-isomorphism classes of elliptic curves $E/\mathbb{F}_p$ with $j(E)=j$. In other words, two elliptic curves $E,E'$ over $\mathbb{F}_p$ represent the same class in $\mathcal{E}_j$ if and only if $j(E)=j(E')=j$ and $E$ is a quadratic twist of $E'$. If $j\neq 0,1728$, the set $\mathcal{E}_j$ has cardinality $1$ (see \cite[Proposition 5.4]{Sil09}), and we fix a representative $E_j/\mathbb{F}_p$. If $j=0$ then $\# \mathcal{E}_0=3$ and we fix $\{E_0^{(1)},E_0^{(2)}, E_0^{(3)} \}$ to be three representatives of the different isomorphism classes. Similarly, if $j=1728$ then $\# \mathcal{E}_{1728}=2$ and we fix $\{E_{1728}^{(1)},E_{1728}^{(2)}\}$ to be two representatives of the different isomorphism classes. In these two special cases, we generically use $E_0$ (resp. $E_{1728}$) to denote any of the three elliptic curves $\{E_0^{(1)},E_0^{(2)}, E_0^{(3)} \}$ (resp. $\{E_{1728}^{(1)},E_{1728}^{(2)}\}$).

\begin{definition} For a prime $\ell\neq p$, the ordinary $\ell$\emph{-isogeny graph} over $\mathbb{F}_p$ is the directed graph $\G_{\ell}(\mathbb{F}_p)$ whose vertex set equals $\mathcal{V}$ and such that, for every $j,j' \in \mathcal{V}$, there are $m$ directed edges from $j$ to $j'$ if and only if $j'$ is a root of $\Phi_\ell(j,Y)$ with multiplicity $m$.
\end{definition}

\begin{remark} \label{rmk:how_to_draw}
The directed edges from one node $j_1$ to another node $j_2$ in $\G_{\ell}(\mathbb{F}_p)$ represent the non-equivalent classes of cyclic isogenies of degree $\ell$ between $E_{j_1}$ and $E_{j_2}$. If there is a directed edge between $j_1$ and $j_2$ with $j_1\neq j_2$, and if $j_1, j_2\notin\{0,1728\}$, then there is a unique edge from $j_2$ to $j_1$ corresponding to the dual isogeny. By convention, in this situation we only represent one undirected edge in our figures. The case $j_1=j_2$ can be more subtle, as an elliptic curve may have a self $\ell$-isogeny $\varphi$ with dual $\widehat{\varphi}$ satisfying $\ker{\varphi}\neq \ker{\widehat{\varphi}}$. In this case, the graphic representations display both $\varphi$ and $\widehat{\varphi}$. 
\end{remark}

We begin our systematic study of the structure of $\mathcal{G}_\ell(\mathbb{F}_p)$. Moving from general to specific, we divide the isogeny graph into progressively smaller subgraphs: cordilleras, belts and volcanoes. We follow the geological terminology from~\cite{FouM02, Mir07}.

Before leaving on a mountain hike, we fix some notations: for each $j \in \mathcal{V}$, the elliptic curve $E_j$ has complex multiplication by an order $\mathcal{O}_j:=\en_{\overline{\mathbb{F}}_p}(E_j)$ in an imaginary quadratic field $K$. We write $D(\mathcal{O})$ for the discriminant of a general order $\mathcal{O}$. For imaginary quadratic orders, it is a negative integer congruent to $0$ or $1$ mod $4$.

\subsection{Cordilleras} \label{subsec:cordilleras}

For every $j \in \mathcal{V}$ let us denote by $\mathrm{Tr}:\en_{\overline{\mathbb{F}}_p}(E_j) \otimes_\mathbb{Z} \mathbb{Q} \to \mathbb{Q}$ the trace map and by
$\pi_j \in \en_{\overline{\mathbb{F}}_p}(E_j)$ the Frobenius endomorphism of the elliptic curve $E_j$. One knows that $\mathrm{Tr}(\pi_j)=p+1-\#E_j(\F_p)$ and that $\mathbb{Z}[\pi_j]$ is isomorphic to the imaginary quadratic order of discriminant $D(\mathbb{Z}[\pi_j])=\mathrm{Tr}(\pi_j)^2-4p$.

\begin{definition}
Let $t\in \Z$. The $t$\emph{-cordillera} in $\G_{\ell}(\F_p)$ is the subgraph of $\G_{\ell}(\F_p)$ induced by the subset of vertices
\[
\mathcal{V}_t:=\{j\in \mathbb{F}_p: \mathrm{Tr}(\pi_j)=\pm t\} \subseteq \mathcal{V}.
\]
\end{definition}

By definition, if $j,j' \in \mathcal{V}_t$ we have $D(\mathbb{Z}[\pi_j])=D(\mathbb{Z}[\pi_{j'}])$, so that both $\en_{\overline{\mathbb{F}}_p}(E_j)$ and $\en_{\overline{\mathbb{F}}_p}(E_{j'})$ contain an order isomorphic to the imaginary quadratic order of discriminant $t^2-4p$. In particular, $E_j$ and $E_{j'}$ have complex multiplication by an order inside the \textit{same} imaginary quadratic field $K$. We call $K$ the \textit{field associated with the $t$-cordillera} and, if $D(\mathcal{O}_K)<-4$, we say that the cordillera is \textit{regular}. 
Cordilleras span the whole isogeny graph. Note that their vertices are defined independently of the isogeny degree $\ell$. See Appendix~\ref{app:tables} for an example of how changing $\ell$ changes the edges in the same $t$-cordillera.

\begin{lemma} \label{lem:cordillera_empty}
For $t\in \mathbb{Z}_{\neq 0}$ the following holds:
\begin{enumerate}
    \item The set $\mathcal{V}_t$ is non-empty if and only if $-2\sqrt{p} \leq t \leq 2\sqrt{p}$;
    \item If $\mathcal{V}_t$ is non-empty then for every order $\mathcal{O}$ containing the order of discriminant $t^2-4p$ there exists an elliptic curve $E/\mathbb{F}_p$ such that $\en_{\overline{\mathbb{F}}_p}(E) \cong \mathcal{O}$ and $j(E)$ is in $\mathcal{V}_t$;
    \item If $j(E)\in{\mathcal{V}_t}$ and if $\en_{\overline{\mathbb{F}}_p}(E) \cong \mathcal{O}$, then all the $h(\mathcal{O})$ $j$-invariants corresponding to curves with CM by $\mathcal{O}$ are also in $\mathcal{V}_t$. 
\end{enumerate}
\end{lemma}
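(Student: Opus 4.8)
The plan is to deduce all three assertions from the work of Deuring and Waterhouse together with Theorem~\ref{thm:main_theorem_CM} and Corollary~\ref{cor:everything_is_here}; throughout one uses that $p\geq 5$, so that $2\sqrt p<p$. For part (1), the implication ``$\mathcal{V}_t\neq\emptyset\Rightarrow|t|\leq2\sqrt p$'' is just the Hasse bound applied to $E_j$ for any $j\in\mathcal{V}_t$. Conversely, given $t$ with $0<|t|\leq2\sqrt p$, Waterhouse's theorem on the possible Frobenius traces of elliptic curves over finite fields \cite[Theorem~4.1]{Wat69}, specialised to $\mathbb{F}_p$, produces an elliptic curve $E/\mathbb{F}_p$ with $\mathrm{Tr}(\pi_E)=t$; since $0<|t|<p$ one has $p\nmid t$, so $E$ is ordinary and $j(E)\in\mathcal{V}_t$.

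For part (2), part (1) guarantees $0<|t|\leq2\sqrt p$, so $K:=\mathbb{Q}(\sqrt{t^2-4p})$ is imaginary quadratic and the order $\mathbb{Z}[\pi]$ of discriminant $t^2-4p$ lies in $\mathcal{O}_K$; moreover $p\nmid t^2-4p$, so $p$ divides the conductor of none of the orders $\mathcal{O}$ with $\mathbb{Z}[\pi]\subseteq\mathcal{O}\subseteq\mathcal{O}_K$ (in accordance with Lemma~\ref{lem:conductor_not_divisible_by_p}). By Waterhouse's classification of the endomorphism rings occurring in a fixed isogeny class \cite[Theorem~4.2]{Wat69}, each such $\mathcal{O}$ is the geometric endomorphism ring of some $E/\mathbb{F}_p$ whose Frobenius satisfies $\pi_E^2-t\,\pi_E+p=0$; then $\mathrm{Tr}(\pi_E)=t$, so $j(E)\in\mathcal{V}_t$.

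For part (3), Corollary~\ref{cor:everything_is_here} gives exactly $h(\mathcal{O})$ elements of $\mathbb{F}_p$ that are $j$-invariants of curves with CM by $\mathcal{O}$, while Theorem~\ref{thm:main_theorem_CM} says that $\mathrm{Cl}(\mathcal{O})$ acts transitively on the $\overline{\mathbb{F}}_p$-isomorphism classes of such curves via $\mathfrak{a}\ast E=E/E[\mathfrak{a}]$. The key point is that this action is realised by $\mathbb{F}_p$-rational isogenies: the Frobenius $\pi_E$ lies in the commutative ring $\mathcal{O}=\en_{\overline{\mathbb{F}}_p}(E)$, so every $\mathcal{O}$-ideal $\mathfrak{a}$ satisfies $\pi_E\bigl(E[\mathfrak{a}]\bigr)\subseteq E[\mathfrak{a}]$, whence $E[\mathfrak{a}]$ is $\mathrm{Gal}(\overline{\mathbb{F}}_p/\mathbb{F}_p)$-stable; choosing $\mathfrak{a}$ invertible and coprime to $p$ in its class, the isogeny $\varphi_{\mathfrak{a}}\colon E\to E/E[\mathfrak{a}]$ is therefore defined over $\mathbb{F}_p$. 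Thus $E/E[\mathfrak{a}]$ descends to an elliptic curve over $\mathbb{F}_p$ that is $\mathbb{F}_p$-isogenous to $E$, hence has the same number of $\mathbb{F}_p$-points and the same Frobenius trace as $E$, namely $\pm t$ by the definition of $\mathcal{V}_t$. Letting $[\mathfrak{a}]$ run over $\mathrm{Cl}(\mathcal{O})$ and using transitivity, every curve with CM by $\mathcal{O}$ is $\overline{\mathbb{F}}_p$-isomorphic to one defined over $\mathbb{F}_p$ with Frobenius trace $\pm t$, so each of the $h(\mathcal{O})$ corresponding $j$-invariants belongs to $\mathcal{V}_t$.

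Parts (1) and (2) are essentially citations once the elementary bound $2\sqrt p<p$ is noted, so the main obstacle is the descent in part (3): the class group a priori acts only on $\overline{\mathbb{F}}_p$-isomorphism classes, and the crux is verifying that the $\mathfrak{a}$-torsion subgroups are Galois-stable, so that the action is carried out by isogenies defined over $\mathbb{F}_p$ — only then may the Frobenius trace be transported along them.
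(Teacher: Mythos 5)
Your parts (1) and (2) follow the paper exactly: both directions of (1) come from the Hasse bound together with Waterhouse's \cite[Theorem 4.1]{Wat69}, and (2) is Waterhouse's classification \cite[Theorem 4.2]{Wat69} of endomorphism rings within a fixed $\mathbb{F}_p$-isogeny class, which keeps the trace constant. Part (3), however, is where you genuinely diverge, and your argument is correct. The paper starts from the mere existence of a \emph{geometric} isogeny between $E$ and $E'$, passes to a finite extension $k/\mathbb{F}_p$ of degree $r$ where it is defined, invokes Tate's isogeny theorem to get $\pi_E^r=\pi_{E'}^r$ or $\pi_E^r=\overline{\pi}_{E'}^r$, and then analyses the resulting root of unity $\zeta=\pi_E/\pi_{E'}$, ruling out $\zeta\neq\pm1$ because that would force $j\in\{0,1728\}$ (a case treated separately via $h(\mathcal{O})=1$). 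You instead upgrade the class-group action of Theorem \ref{thm:main_theorem_CM} to an action by $\mathbb{F}_p$-rational isogenies: since $\pi_E$ lies in the commutative ring $\mathcal{O}$, each $E[\mathfrak{a}]$ is Frobenius-stable, hence Galois-stable, so $\varphi_{\mathfrak{a}}$ descends to $\mathbb{F}_p$ and the trace is transported directly; transitivity plus Corollary \ref{cor:everything_is_here} then finishes. Your route avoids both the case split at $j=0,1728$ and the roots-of-unity analysis, at the cost of the (standard but necessary) verification that the torsion subgroups are Galois-stable and that quotients by Galois-stable kernels descend; the paper's route needs no rationality of kernels but must control the ambiguity $\pi_E=\zeta\pi_{E'}$ left by Tate's theorem. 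Both are complete proofs of the same statement.
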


\begin{proof}
The first part is given in \cite[Theorem 4.1]{Wat69}. We now prove the second part: let $j(E)$ be an element in $\mathcal{V}_t$. Let $\mathcal{O}$ be the endomorphism ring of $E$. It contains the order $\mathbb{Z}[\pi_E]$ generated by the Frobenius $\pi_E$, which has discriminant $t^2-4p$. Each order containing an order isomorphic to $\mathbb{Z}[\pi_E]$ can be realised as the endomorphism ring of an elliptic curve defined over $\mathbb{F}_p$ and $\mathbb{F}_p$-isogenous to $E$, see \cite[Theorem 4.2, point (2)]{Wat69}. In particular, as these curves are all $\mathbb{F}_p$-isogenous to $E$, they have the same trace $t$ and this proves the second part. For the third part, by Corollary \ref{cor:everything_is_here}, all the $j$-invariants corresponding to curves with CM by $\mathcal{O}$ are rational over $\mathbb{F}_p$. If $j(E)=0$ or $j(E)=1728$, the class number of $\mathcal{O}$ is one and the result is immediate. If $j(E)\notin\{0,1728\}$, consider a $j$-invariant $j(E') \in \mathbb{F}_p$ of an elliptic curve $E'$ with complex multiplication by $\mathcal{O}$. Since $E$ and $E'$ have complex multiplication by the same order, they are geometrically isogenous. Let $k/\mathbb{F}_p$ be a degree $r$ field extension over which this isogeny is defined. The Frobenius endomorphisms $\pi_E$ and $\pi_{E'}$ of the two elliptic curves, seen as elements of the same imaginary quadratic field $K=\mathrm{Frac}(\mathcal{O})$, satisfy $\pi_E^r=\pi_{E'}^r$ or $\pi_E^r=\overline{\pi}_{E'}^r$ by Tate's isogeny theorem \cite[Theorem 1 (c4)]{Tate_1966}. This means that there exists a root of unity $\zeta \in K$ such that
\[
\pi_E=\zeta \pi_{E'} \hspace{0.2cm} \text{ or } \hspace{0.2cm} \pi_E= \zeta \overline{\pi}_{E'}.
\]
If $\zeta=\pm 1$, it follows immediately that $j(E') \in \mathcal{V}_t$. Otherwise, $\zeta \in \{\pm i, \pm \zeta_3, \pm \zeta_3^2 \}$, where $i$ is a primitive fourth root of unity and $\zeta_3$ is a primitive third root of unity. In this case, one easily deduces that $j(E)=j(E') \in \{0, 1728\}$, a contradiction. There are $h(\mathcal{O})$ possibilities for $j(E')$, and that gives the claim. 
\end{proof}

Fix a nonzero integer $t \in (-2\sqrt{p}, 2\sqrt{p})$ and let $K$ be the field associated with the $t$-cordillera. All the $j$-invariants of elliptic curves with complex multiplication by the maximal order in $K$ belong to $\mathcal{V}_t$ by Lemma \ref{lem:cordillera_empty} (3). Fix such a singular invariant $j$ and let $E_j$ be a corresponding elliptic curve over $\mathbb{F}_p$. If $D(\mathcal{O}_K)<-4$, then $E_j$ is determined only up to quadratic twisting, and the Frobenius endomorphism of any of its quadratic twists has, up to sign, the same trace. This in particular implies, using Lemma \ref{lem:cordillera_empty} (2) that any imaginary quadratic number field $K$ with discriminant $D(\mathcal{O}_K)<-4$ can be associated \textit{to at most one cordillera} (with positive trace). One can formulate this statement in terms of norm equations as follows. 

\begin{lemma}\label{normeq:regular}
Let $K$ be an imaginary quadratic field of discriminant $D(\mathcal{O}_K)<-4$. Then the equation
\begin{equation}\label{cox:bigD}
4p = t^2-v^2D(\mathcal{O}_K)    
\end{equation}
has at most one solution $(t,v)\in \N^2$ with $p \nmid t$.
\end{lemma}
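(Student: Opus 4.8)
The plan is to rephrase \eqref{cox:bigD} as an assertion about elements of $\mathcal{O}_K$ of norm $p$ and then exploit the scarcity of units. Write $D := D(\mathcal{O}_K)$ and, to a solution $(t,v)\in\mathbb{N}^2$, associate $\pi_{t,v} := \tfrac{t+v\sqrt{D}}{2}\in K$. Reducing $4p = t^2 - v^2 D$ modulo $4$ gives $t^2 \equiv v^2 D \pmod 2$, hence $t \equiv vD \pmod 2$, which is exactly the condition for $\pi_{t,v}$ to lie in $\mathcal{O}_K = \mathbb{Z} + \mathbb{Z}\,\tfrac{D+\sqrt D}{2}$; moreover $N_{K/\mathbb{Q}}(\pi_{t,v}) = \tfrac{t^2 - v^2 D}{4} = p$. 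So every solution produces an element of $\mathcal{O}_K$ of norm $p$, and the lemma will follow once we show that this element determines the solution.

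First I would observe that $p$ splits in $K$. If $p \mid D$, then reducing $4p = t^2 - v^2 D$ modulo $p$ forces $p \mid t^2$, hence $p \mid t$, against the hypothesis; so $p \nmid D$ and $p$ is unramified in $K$. As $(\pi_{t,v})$ is an ideal of norm $p$, the prime $p$ is not inert either, so $p\mathcal{O}_K = \mathfrak{p}\overline{\mathfrak{p}}$ with $\mathfrak{p} \neq \overline{\mathfrak{p}}$, and the factorization $(\pi_{t,v})\overline{(\pi_{t,v})} = (p) = \mathfrak{p}\overline{\mathfrak{p}}$ shows $(\pi_{t,v}) \in \{\mathfrak{p}, \overline{\mathfrak{p}}\}$.

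Now the hypothesis $D < -4$ enters: it guarantees $\mathcal{O}_K^{\times} = \{\pm 1\}$. Given two solutions $(t_1, v_1), (t_2, v_2)$ with associated elements $\pi_1, \pi_2$, both $(\pi_1)$ and $(\pi_2)$ lie in $\{\mathfrak{p}, \overline{\mathfrak{p}}\}$, so one of $\pi_1/\pi_2$, $\pi_1/\overline{\pi_2}$ is a unit, i.e. $\pi_1 \in \{\pm\pi_2, \pm\overline{\pi_2}\}$. Comparing rational parts and coefficients of $\sqrt{D}$ then yields $(t_1, v_1) \in \{(t_2, v_2), (-t_2, -v_2), (t_2, -v_2), (-t_2, v_2)\}$. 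Since $p \nmid t_i$ forces $t_i \neq 0$, and $4p = t_i^2$ is impossible for $p$ prime (so $v_i \neq 0$), we have $t_i, v_i > 0$, leaving only $(t_1, v_1) = (t_2, v_2)$. The argument has no genuinely hard step; the one thing to watch is the sign and conjugation bookkeeping in this last paragraph, and it is worth noting that $D < -4$ is used precisely to eliminate the extra roots of unity — for $D \in \{-3, -4\}$ multiplying $\pi$ by $\zeta_3$ or $i$ produces genuinely distinct solutions, which is what makes those cordilleras irregular and is consistent with the discussion preceding the lemma.
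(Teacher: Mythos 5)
Your proof is correct, but it takes a genuinely different route from the paper's. The paper gives no formal proof of this lemma: it presents the statement as a norm-equation reformulation of the preceding geometric discussion, namely that when $D(\mathcal{O}_K)<-4$ an elliptic curve over $\mathbb{F}_p$ with CM by $\mathcal{O}_K$ is determined up to quadratic twisting, and twisting only flips the sign of the trace of Frobenius, so $K$ is associated with at most one cordillera with positive trace (and then $v$ is determined by $t$). Your argument instead stays entirely inside $\mathcal{O}_K$: you attach to each solution the integral element $\pi_{t,v}=\tfrac{t+v\sqrt{D}}{2}$ of norm $p$ (the integrality check $t\equiv vD \bmod 2$ is right), observe that $p\nmid t$ rules out ramification and the existence of an ideal of norm $p$ rules out inertness, and then use unique factorization of ideals together with $\mathcal{O}_K^{\times}=\{\pm 1\}$ to conclude that $\pi_{t,v}$ is determined up to sign and conjugation; ruling out $t=0$ and $v=0$ finishes the bookkeeping correctly. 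Both arguments ultimately rest on the same fact --- the triviality of the unit group for $D<-4$, which is exactly where the cases $D\in\{-3,-4\}$ of Lemma \ref{lem:normeq_bad_cases} diverge --- but yours is self-contained and makes no appeal to the CM theory of elliptic curves over finite fields, which is arguably preferable for a statement that is purely about quadratic fields. One microscopic slip of wording: you say ``reducing modulo $4$ gives $t^2\equiv v^2D\pmod{2}$''; reduction modulo $2$ is all you need there, and the congruence you actually use is the correct one.
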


If $\mathrm{disc}(K) \in \{-3,-4\}$ the situation is more delicate because the vertices $j=0$ and $j=1728$ are represented by 3 and 2 curves respectively, and the squared traces of the Frobenius endomorphisms of these curves may be different. These vertices may thus belong to different cordilleras at the same time. This is indeed always the case, as the following lemma shows.

\begin{lemma} \label{lem:normeq_bad_cases}
The following holds:
\begin{enumerate}
    \item[(i)] The equation 
\begin{equation}
    4p = t^2+3v^2
\end{equation}
has no integer solutions if $p\equiv 2 \text{ mod } 3$, and exactly three solutions $(t,v)$ with $t>0$ and $v>0$ if $p\equiv 1 \text{ mod } 3$. 
%More precisely, the full set of solutions is of the form 
%\[
%\{(\pm (a-3b),\pm (a+b));(\pm2a,\pm2b);(\pm(a+3b),\pm(a-b))\}
%\]
%for some fixed $(a,b)\in \Z^2$.

\item[(ii)] The equation 
\begin{equation}\label{eq:4}
    4p = t^2+4v^2
\end{equation}
has no solutions if $p\equiv3 \text{ mod } 4$, and exactly two solutions $(x,y)$ with $x>0$ and $y>0$ if $p\equiv1 \text{ mod } 4$. 
%More precisely, the full set of solutions is of the form \[\{(\pm2a,\pm b);(\pm2b,\pm a)\}\]
%for some fixed $(a,b)\in \Z^2$.
\end{enumerate}
\end{lemma}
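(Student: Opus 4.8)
The plan is to translate each equation into a classical representation problem --- for part (i), representing $p$ by the principal binary quadratic form of discriminant $-3$, and for part (ii), representing $p$ as a sum of two squares --- and then to pass carefully from the number of \emph{integer} solutions to the number of solutions with \emph{both coordinates positive} by keeping track of the obvious sign symmetries. Throughout one uses the standing hypothesis $p\geq 5$, which excludes the degenerate primes $2$ and $3$.

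For part (i): I would first reduce $4p = t^2+3v^2$ modulo $4$ to see that $t\equiv v\pmod 2$; hence $x:=(t-v)/2$ and $y:=v$ form an integer point, and a short computation gives $x^2+xy+y^2 = p$, while conversely $(t,v)=(2x+y,y)$ recovers a solution of $4p=t^2+3v^2$. This is a bijection between the two solution sets. If $p\equiv 2\pmod 3$, reducing $4p=t^2+3v^2$ modulo $3$ forces $t^2\equiv 2\pmod 3$, which is impossible, so there are no solutions. If $p\equiv 1\pmod 3$, then $p$ splits in $\mathbb{Z}[\zeta_3]$; since this ring is a PID with six units, there are exactly $6\cdot 2 = 12$ elements of norm $p$ (the two principal primes above $p$, each scaled by six units), and hence exactly $12$ integer solutions of $x^2+xy+y^2=p$. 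Transporting through the bijection, $4p=t^2+3v^2$ has exactly $12$ integer solutions. The group $\{\pm 1\}^2$ acts on this set by $(t,v)\mapsto(\pm t,\pm v)$; no solution has $t=0$ (else $3\mid p$) or $v=0$ (else $4p$ is a square), so the action is free, the $12$ solutions split into $3$ orbits, and each orbit has a unique member with $t>0$ and $v>0$. This yields exactly $3$ positive solutions.

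For part (ii): I would argue in the same way with $\mathbb{Z}[i]$ in place of $\mathbb{Z}[\zeta_3]$. Reducing $4p=t^2+4v^2$ modulo $4$ forces $t$ even, and writing $t=2s$ turns the equation into $p=s^2+v^2$. If $p\equiv 3\pmod 4$ this is impossible, since a sum of two squares is never $\equiv 3\pmod 4$, so there are no solutions. If $p\equiv 1\pmod 4$, the classical two-squares theorem --- equivalently the fact that $\mathbb{Z}[i]$ is a PID with four units --- shows that $s^2+v^2=p$ has exactly $8$ integer solutions, hence $4p=t^2+4v^2$ has exactly $8$ integer solutions $(t,v)=(2s,v)$. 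As before $t\neq 0$ and $v\neq 0$ since $p$ is prime, so $\{\pm 1\}^2$ acts freely, producing $2$ orbits and exactly $2$ solutions with $t>0$ and $v>0$.

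The only step that requires genuine care --- rather than a citation --- is the passage from integer solution counts to positive solution counts: one must verify that no solution lies on a coordinate axis, so that the sign group $\{\pm 1\}^2$ acts freely and the division by $4$ is exact. This is precisely where the hypothesis $p\geq 5$ enters. The underlying representation counts themselves are entirely standard (class number one for the discriminants $-3$ and $-4$), so I do not anticipate any real obstacle beyond this bookkeeping.
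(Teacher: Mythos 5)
Your proof is correct and follows exactly the route the paper indicates: the paper's entire proof is the one-line remark that both statements reduce to norm equations in $\mathbb{Z}\bigl[\frac{1+\sqrt{-3}}{2}\bigr]$ and $\mathbb{Z}[\sqrt{-1}]$, and you have simply carried out that reduction in full, including the unit count ($6$ and $4$ units in these two PIDs) and the careful passage from all integer solutions to positive ones via the free action of $\{\pm 1\}^2$. No discrepancies with the paper's approach.
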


\begin{proof}
Both statements reduce to solving a norm equation in the ring $\mathbb{Z}[\frac{1+\sqrt{-3}}{2}]$ and $\mathbb{Z}[\sqrt{-1}]$ respectively.
\end{proof}

In particular, Lemma \ref{lem:normeq_bad_cases} shows that if $j=0$ is an ordinary invariant then $K=\mathbb{Q}(\sqrt{-3})$ is associated with three $t$-cordilleras, and if $j=1728$ is an ordinary invariant then $K=\mathbb{Q}(\sqrt{-1})$ is associated with two $t$-cordilleras ($t$ positive).

\subsection{Belts} \label{subsec:belts}

%\textcolor{red}{TODO: non-regular cordilleras. Do we use regularity?}

In this section, we work inside a fixed non-empty $t$-cordillera $\mathcal{C}_t$ with vertex set $\mathcal{V}_t$. Let $K$ be the imaginary quadratic number field associated with this cordillera and denote by $\Or_K$ its ring of integers. Choose $\pi \in K$ to be any element such that 
\[
\pi^2-t\pi+p=0
\]
so that, if $v:=[\Or_K:\Z[\pi]]$ denotes the conductor of the order $\mathbb{Z}[\pi]$ in $K$, then
\[
4p-t^2=-v^2D(\mathcal{O}_K)
\]
where $D(\mathcal{O}_K)$ is the discriminant of $K$. Denoting by $v_\ell(\cdot)$ the usual $\ell$-adic valuation on $\mathbb{Q}$, we let $d:=v_{\ell}(v)$ and $v':= v\cdot \ell^{-d}$. 
\begin{definition}
An order $\mathcal{O}$ such that $\Z[\pi]\subset\Or\subset\Or_K$ is said to be $\ell$\emph{-saturated} if  $v_{\ell}([\Or_K:\Or])=d$. It is said to be $\ell$\emph{-dry} if $v_{\ell}([\Or_K:\Or])=0$. 
\end{definition}
For every order $\Z[\pi]\subset\Or\subset\Or_K$ we define its \textit{saturation} $\widetilde{\mathcal{O}}$ as the unique order in $K$ such that
\[
[\mathcal{O}_K : \widetilde{\mathcal{O}}]=[\mathcal{O}_K : \mathcal{O}] \cdot \ell^{d-v_\ell([\mathcal{O}_K : \mathcal{O}])}.
\]
Note that $\widetilde{\mathcal{O}}$ still contains $\mathbb{Z}[\pi]$. We are now ready to state the main definition of this section.
\begin{definition}
For a positive integer $m\mid v'$, the $t$-\emph{belt} of index  $m$, denoted by $\mathcal{B}_{t,m}$, is the subgraph of $\mathcal{G}_{\ell}(\F_p)$ induced by the subset
\[\mathcal{V}_{t,m} = \{j \in \mathcal{V}_t: \widetilde{\en_{\overline{\mathbb{F}}_p}(E_j)} \cong \mathbb{Z}+ m\ell^{d} \mathcal{O}_K \} \subseteq \mathcal{V}_t.\]
\end{definition}

\begin{lemma} The following holds:
\begin{enumerate}
    \item The cut-set determined by the vertex partition $(\mathcal{V}_{t,m}, \mathcal{V} \setminus \mathcal{V}_{t,m})$ is empty.
    \item Different $t$-belts form disjoint subgraphs of $\mathcal{G}_\ell(\mathbb{F}_p)$.
\end{enumerate}
\end{lemma}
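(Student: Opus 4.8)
The plan is to reduce both statements to the fact that horizontal and vertical $\ell$-isogenies only connect $j$-invariants whose endomorphism rings sit at controlled places in the lattice of orders between $\mathbb{Z}[\pi]$ and $\mathcal{O}_K$. First I would recall, from Proposition~\ref{isogeny:type}, that if there is a directed edge from $j$ to $j'$ in $\mathcal{G}_\ell(\mathbb{F}_p)$ then the corresponding $\ell$-isogeny $\varphi: E_j \to E_{j'}$ has $[\mathcal{O}_j : \mathcal{O}_{j'}] \in \{1/\ell, 1, \ell\}$; since both orders contain $\mathbb{Z}[\pi]$ (as both curves lie in $\mathcal{V}_t$, using Lemma~\ref{lem:cordillera_empty}), the conductors $[\mathcal{O}_K : \mathcal{O}_j]$ and $[\mathcal{O}_K : \mathcal{O}_{j'}]$ differ by a factor in $\{1/\ell, 1, \ell\}$. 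The key observation is that this changes the $\ell$-adic valuation of the conductor by at most $1$, but leaves the prime-to-$\ell$ part of the conductor \emph{unchanged}. Hence if $\mathcal{O}_j$ has prime-to-$\ell$ conductor part $m$ (meaning $v_\ell$-free part of $[\mathcal{O}_K : \mathcal{O}_j]$ equals $m$), then so does $\mathcal{O}_{j'}$, which by definition of the saturation $\widetilde{(\cdot)}$ means $\widetilde{\mathcal{O}_j} \cong \widetilde{\mathcal{O}_{j'}} \cong \mathbb{Z} + m\ell^d \mathcal{O}_K$. This is precisely the statement that $j$ and $j'$ lie in the same belt $\mathcal{V}_{t,m}$.

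For part (1), I would argue that the cut-set determined by $(\mathcal{V}_{t,m}, \mathcal{V} \setminus \mathcal{V}_{t,m})$ is empty: any edge in $\mathcal{G}_\ell(\mathbb{F}_p)$ with one endpoint in $\mathcal{V}_{t,m}$ has, by the previous paragraph, its other endpoint also in $\mathcal{V}_{t,m}$ (note an edge out of $\mathcal{V}_{t,m}$ in particular stays in $\mathcal{V}_t$ since $\mathcal{V}_t$ itself has empty cut-set by the structure of cordilleras — one should check that $\varphi$ being an $\ell$-isogeny with $p \neq \ell$ preserves the Frobenius trace up to sign, which is immediate since $\varphi$ intertwines the two Frobenius endomorphisms). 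Here one must be slightly careful about the possibility that $\varphi$ goes from $\mathcal{V}_{t,m}$ to a vertex whose endomorphism ring has \emph{larger} $\ell$-conductor but the same prime-to-$\ell$ part; that is still in $\mathcal{V}_{t,m}$ since the saturation is insensitive to the $\ell$-part, so no edge escapes. Part (2) is then formal: the sets $\mathcal{V}_{t,m}$ for distinct $m \mid v'$ are pairwise disjoint because the isomorphism class of $\widetilde{\mathcal{O}_j}$, equivalently the value of $m$, is determined by $j$; combined with part (1), no edge joins $\mathcal{V}_{t,m}$ to $\mathcal{V}_{t,m'}$ for $m \neq m'$, so the belts are genuinely disjoint subgraphs.

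The main obstacle I anticipate is the bookkeeping around the pathological vertices $j = 0$ and $j = 1728$, where $\mathcal{E}_j$ has more than one element and the "endomorphism ring of $E_j$" is only well-defined after choosing a twist; one has to confirm that for these vertices the relevant order (the maximal order, since $h = 1$ in those cases) is still the one appearing in the definition of $\mathcal{V}_{t,m}$, and that the extra edges coming from automorphisms (as in the $j=0$ example with $\Phi_3(0,Y) = Y(Y-3)^3$) still respect the conductor constraints — which they do, since those edges still correspond to genuine $\ell$-isogenies and Proposition~\ref{isogeny:type} applies verbatim. A second, more minor point is verifying that the prime-to-$\ell$ part of the conductor is genuinely invariant rather than merely bounded: this follows because $[\mathcal{O}_j : \mathcal{O}_{j'}]$ is a power of $\ell$ (it divides $\ell$ or its inverse), so multiplying or dividing the conductor by it cannot touch any prime $q \neq \ell$.
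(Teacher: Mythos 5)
Your proposal is correct and follows essentially the same route as the paper, whose (one-line) proof invokes exactly the two ingredients you use: Proposition~\ref{isogeny:type} (so $[\mathcal{O}_j:\mathcal{O}_{j'}]\in\{1/\ell,1,\ell\}$ along any edge, hence the prime-to-$\ell$ part of the conductor, and with it the saturation, is an edge-invariant) together with the containment $\mathbb{Z}[\pi]\subseteq\mathcal{O}_j$ for every $j\in\mathcal{V}_t$. The only glib spot is your parenthetical claim that trace preservation is ``immediate since $\varphi$ intertwines the two Frobenius endomorphisms'': a geometric $\ell$-isogeny need not be defined over $\mathbb{F}_p$, so it does not literally intertwine the two Frobenii, and the paper instead obtains that an edge cannot leave $\mathcal{V}_t$ from Lemma~\ref{lem:cordillera_empty}(3), whose proof goes through Tate's isogeny theorem over a finite extension and yields $\pi_E=\zeta\pi_{E'}$ with $\zeta=\pm1$ away from $j\in\{0,1728\}$ --- precisely the pathological vertices you already flag separately.
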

\begin{proof}
Both points follow immediately from Proposition \ref{isogeny:type} and the fact that the endomorphism ring of every elliptic $E$ with $j(E) \in \mathcal{V}_t$ must contain a subring isomorphic to $\mathbb{Z}[\pi]$.
\end{proof}
Note that two belts $\mathcal{B}_{t,m_1}$ and $\mathcal{B}_{t,m_2}$ for $m_1\neq m_2$ cannot contain $j$-invariants of elliptic curves that have isomorphic endomorphism ring. Moreover, a belt does not have to be connected.

% \begin{lemma}\label{sat}
% There are no descending (resp. ascending) $\ell$-isogenies from an elliptic curve with an $\ell$-saturated (resp. $\ell$-dry) endomorphism ring.
% \end{lemma}
% \begin{proof}
% \textcolor{red}{TODO. Do we really need this lemma here?}
% \end{proof}

\subsection{Isogeny volcanoes} \label{subsec:volcanoes}

Isogeny volcanoes are the connected components of $\mathcal{G}_\ell(\mathbb{F}_p)$. The vertices of these graphs come endowed with a natural ``stratification" by level, in the sense of the following definition.

\begin{definition}
An ordinary elliptic curve $E/\F_p$, its $j$-invariant $j(E)$, and its associated order $\en(E)=\Or=\Z+f\Or_K$ are said to lie at \emph{level} $n$ in the volcano containing $j(E)$ if $v_{\ell}(f)=n$. 
\end{definition}

The subgraph induced by the subset of level-$n$ vertices in a volcano $V$ is denoted by $V_n$. The graph $V_0$ is called the \textit{crater} of $V$ while we say that the vertices in $V\setminus V_0$ are on the \textit{lava flows} of the volcano $V$. The \textit{depth} of $V$ is the maximal level on which vertices $j \in V$ lie.

\begin{lemma}\label{lem:invertible}
Let $k$ be a finite field of characteristic $p$ and $\ell \neq p$ a prime number. Let $E$ and $E'$ be two elliptic curves over $k$ with complex multiplication by an order $\mathcal{O}$ of conductor $f$ in an imaginary quadratic field $K$.
\begin{enumerate}
    \item If $\mathfrak{L} \subseteq \mathcal{O}$ is an invertible ideal of norm $\ell$, then $E/E[\mathfrak{L}]$ has complex multiplication by $\mathcal{O}$ and the quotient map $E \to E/E[\mathfrak{L}]$ has degree $\ell$;
    \item If $\varphi: E \to E'$ is an isogeny of degree $\ell$, then there exists an invertible ideal $\mathfrak{L} \subseteq \mathcal{O}$ of norm $\ell$ such that $E' \cong E/E[\mathfrak{L}]$;
    \item If $\mathfrak{L} \subseteq \mathcal{O}$ is a non-invertible ideal of norm $\ell$, then $\ell \mid f$ and $E/E[\mathfrak{L}]$ has complex multiplication by the unique quadratic order containing $\mathcal{O}$ with index $\ell$.
\end{enumerate}
\end{lemma}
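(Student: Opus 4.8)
The plan is to reduce everything to two classical inputs: the ideal-theoretic description of the $\mathrm{Cl}(\mathcal{O})$-action recalled in Theorem \ref{thm:main_theorem_CM} (together with the fact, cited before it, that $\mathrm{End}_{\overline{k}}(E/E[\mathfrak{a}]) \cong \mathrm{End}_{\overline{k}}(E)$ when $\mathfrak{a}$ is invertible), and the structure theory of orders in imaginary quadratic fields, in particular the behaviour of $\ell$ in passing between $\mathcal{O}$ and its neighbouring orders. For part (1): if $\mathfrak{L} \subseteq \mathcal{O}$ is invertible of norm $\ell$, then the isogeny $\varphi_{\mathfrak{L}}\colon E \to E/E[\mathfrak{L}]$ discussed after the $\mathfrak{a}$-torsion definition has degree $N(\mathfrak{L}) = \ell$, and since $\mathfrak{L}$ is an invertible ideal of $\mathrm{End}_{\overline{k}}(E) \cong \mathcal{O}$ the quotient curve again has CM by $\mathcal{O}$; this is essentially immediate from the material already assembled in the Preliminaries, so the only thing to check is that such an $\mathfrak{L}$ with $N(\mathfrak{L}) = \ell$ need not exist for every $\ell$ — but the statement is conditional on its existence, so there is nothing to prove beyond quoting the degree formula.

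For part (2): given $\varphi\colon E \to E'$ of degree $\ell$, let $C = \ker\varphi$, a cyclic subgroup of order $\ell$. One shows $C = E[\mathfrak{L}]$ for a suitable $\mathcal{O}$-ideal $\mathfrak{L}$ of norm $\ell$ by taking $\mathfrak{L} := \{\alpha \in \mathcal{O} : \alpha(C) = 0\} = \{\alpha \in \mathcal{O}: C \subseteq \ker\alpha\}$, an ideal of $\mathcal{O}$; one then argues that $E[\mathfrak{L}] = C$ (the inclusion $C \subseteq E[\mathfrak{L}]$ is by construction, and equality follows because $[\ell] \in \mathfrak{L}$ forces $E[\mathfrak{L}] \subseteq E[\ell]$, and then a counting/cyclicity argument using that $E[\ell] \cong (\mathbb{Z}/\ell)^2$ pins down $E[\mathfrak{L}]$ to be exactly the order-$\ell$ subgroup $C$, since $\mathfrak{L} \supsetneq \ell\mathcal{O}$ as soon as $C \neq 0$). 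Finally, whether $\mathfrak{L}$ is invertible: by Proposition \ref{isogeny:type}, $\varphi$ is horizontal here because both $E$ and $E'$ have CM by the \emph{same} order $\mathcal{O}$, so $[\mathcal{O}_1 : \mathcal{O}_2] = 1$; an $\mathcal{O}$-ideal of prime norm $\ell$ is invertible precisely when it is not of the form $\ell \cdot (\text{ideal})$ forced by $\ell \mid f$ — and in the non-invertible case the quotient would have a \emph{different} (strictly larger) endomorphism ring by part (3), contradicting $E' $ having CM by $\mathcal{O}$. So $\mathfrak{L}$ is invertible, and $E' \cong E/E[\mathfrak{L}]$ follows since both are the codomain of an isogeny with the same kernel (Definition \ref{def:equivalent}, up to isomorphism).

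For part (3): if $\mathfrak{L} \subseteq \mathcal{O}$ has norm $\ell$ but is \emph{not} invertible, then $\ell$ must divide the conductor $f$ — this is the standard fact that an $\mathcal{O}$-ideal of prime norm $p'$ fails to be invertible only if $p' \mid f$, which follows from the local description $\mathcal{O}_{p'} = \mathbb{Z}_{p'} + p'^{v_{p'}(f)}\mathcal{O}_{K,p'}$ and the classification of ideals in such local rings. One then computes that $E/E[\mathfrak{L}]$ has CM by the order $\mathcal{O}' = \mathbb{Z} + (f/\ell)\mathcal{O}_K$, the unique order containing $\mathcal{O}$ with index $\ell$: concretely, work over $\mathbb{C}$ via Deuring lifting (Lemma \ref{lem:conductor_not_divisible_by_p} guarantees $p \nmid f$, so the relevant order lifts), write $E(\mathbb{C}) \cong \mathbb{C}/\Lambda$ with $\Lambda$ a proper $\mathcal{O}$-lattice, and check that $\mathfrak{L}^{-1}\Lambda$ (interpreting $\mathfrak{L}^{-1}$ as $\{x \in K : x\mathfrak{L} \subseteq \mathcal{O}\}$) is a proper $\mathcal{O}'$-lattice; the key local computation is that the non-invertible norm-$\ell$ ideal of $\mathcal{O}_\ell$ is $\ell\mathcal{O}_{K,\ell} \cap \mathcal{O}_\ell$ (the "conductor-type" ideal), and dividing $\Lambda$ by it enlarges the multiplier ring by exactly one step up the tower of orders. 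Alternatively one can cite \cite[\textsection 2.7]{Sut13} or \cite{Koh96} for this computation directly.

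The main obstacle is the careful bookkeeping in part (3): identifying the unique non-invertible norm-$\ell$ ideal and verifying that quotienting by its torsion subgroup lands exactly in the order one step up (rather than jumping further or staying put). Everything else is a matter of correctly invoking the degree formula $\deg\varphi_{\mathfrak{a}} = N(\mathfrak{a})$, the invariance of the endomorphism ring under quotienting by invertible ideals, Proposition \ref{isogeny:type} to rule out vertical isogenies in parts (1)–(2), and the elementary fact that cyclic order-$\ell$ subgroups of $E$ are in bijection with norm-$\ell$ ideals of $\mathcal{O}$ once we fix the CM structure.
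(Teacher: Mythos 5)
Your plan is correct and, for parts (1) and (2), essentially coincides with the paper's proof: both take $\mathfrak{L}$ to be the annihilator of $\ker \varphi$ in $\mathcal{O}$ and identify $\ker\varphi=E[\mathfrak{L}]$ by comparing cardinalities via $\#E[\mathfrak{a}]=\#(\mathcal{O}/\mathfrak{a})$ (valid since $\ell\neq p$). One step you compress too much: the strict containment $\mathfrak{L}\supsetneq \ell\mathcal{O}$ does \emph{not} follow from ``$C\neq 0$'' alone --- for a vertical isogeny the annihilator of the kernel is exactly $\ell\mathcal{O}$ and $E[\mathfrak{L}]=E[\ell]$. What makes it work is precisely the hypothesis that $E$ and $E'$ have the \emph{same} endomorphism ring, which forces $\ker\varphi$ to be an $\mathcal{O}$-stable cyclic module isomorphic to $\mathcal{O}/\mathfrak{L}$; the paper makes this explicit via a finite-field analogue of \cite[II, Corollary 1.1.1]{Silverman_advanced}, and you should too. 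Your two genuine divergences from the paper are both legitimate. For the invertibility of $\mathfrak{L}$ in part (2) you argue by contradiction with part (3) (logically fine, since (3) is independent), whereas the paper computes directly that the order $\mathcal{O}(\mathfrak{L})=\{x\in K: x\mathfrak{L}\subseteq\mathfrak{L}\}$ equals $\mathcal{O}$ using \cite[Proposition 3.9]{Wat69}; the inputs are the same. For part (3) you propose lifting to $\mathbb{C}$ via Deuring and computing with lattices; this works (and your local identification of the unique non-invertible norm-$\ell$ ideal is correct), but it is heavier than needed, since you must also invoke Deuring a second time to see that reduction preserves the endomorphism ring of the quotient. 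The paper stays entirely algebraic: writing $\mathcal{O}'=\mathbb{Z}[\omega]$, one has $\mathcal{O}=\mathbb{Z}[\ell\omega]$ and $\mathfrak{L}=(\ell,\ell\omega)=\ell\mathcal{O}'$, so $\mathfrak{L}\mathcal{O}'$ is principal, hence $\mathcal{O}(\mathfrak{L})=\mathcal{O}'$, and \cite[Proposition 3.9]{Wat69} immediately gives that $E/E[\mathfrak{L}]$ has CM by $\mathcal{O}'$. That one-line computation is what your ``careful bookkeeping'' obstacle reduces to, and adopting it would simplify your part (3) considerably.
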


\begin{proof}
For an ideal $I \subseteq \mathcal{O}$ we denote by $\mathcal{O}(I):=\{x \in K: xI \subseteq I \}$ the order associated with $I$. We always have $\mathcal{O} \subseteq \mathcal{O}(I)$ and $\mathcal{O}(I)$ is by definition the smallest quadratic order $\mathcal{O}' \supseteq \mathcal{O}$ such that $I\mathcal{O}'$ is invertible in $\mathcal{O}'$.

Part (1) follows from \cite[Proposition 3.9]{Wat69} and the fact that $\# E[\mathfrak{L}]= \# (\mathcal{O}/\mathfrak{L})$, since $\ell \neq p$.

We now prove part (2). Consider the ideal
\[
\mathfrak{L}:=\{ f \in \mathrm{End}_{\overline{k}}(E): f(\ker \varphi)=0 \} \subseteq \mathrm{End}_{\overline{k}}(E) = \mathcal{O}.
\]

We clearly have $\ker \varphi \subseteq E[\mathfrak{L}]$. On the other hand, it is not difficult to see, using a finite fields analogue of \cite[II, Corollary 1.1.1]{Silverman_advanced} and the fact that $E$ and $E'$ have complex multiplication by the same order, that $\ker \varphi$ is a cyclic $\mathcal{O}$-module which is then isomorphic to $\mathcal{O}/\mathrm{Ann}_\mathcal{O}(\ker \varphi) = \mathcal{O}/\mathfrak{L}$, where $\mathrm{Ann}_\mathcal{O}(\ker \varphi)$ denotes the annihilator ideal of $\ker \varphi$. Since $\ell \neq p$ we have 
\[
\# E[\mathfrak{L}] = \# \mathcal{O}/\mathfrak{L} = \# \ker \varphi = \ell
\]
and we then obtain $\ker \varphi = E[\mathfrak{L}]$. Hence $E' \cong E/\ker \varphi = E/E[\mathfrak{L}]$. Using \cite[Proposition 3.9]{Wat69} (which can be applied thanks to \cite[Theorem 4.5]{Wat69}) we see that $\mathcal{O}(\mathfrak{L})=\mathcal{O}$ and so $\mathfrak{L}$ is invertible in $\mathcal{O}$.

We finally prove part (3). Note that the existence of a non-invertible ideal of norm $\ell$ implies that the conductor of $\mathcal{O}$ is divisible by $\ell$. Using~\cite[Proposition 3.9]{Wat69} again, we need to prove that $\mathcal{O}(\mathfrak{L})$ is equal to the unique order $\mathcal{O}'$ containing $\mathcal{O}$ with index $\ell$. Set $\mathcal{O}'=\mathbb{Z}[\omega]$. Then we have $\mathcal{O}=\mathbb{Z}[\ell \omega]$ and $\mathfrak{L}=(\ell, \ell \omega)$. Hence $\mathfrak{L} \mathcal{O}'$ is principal, generated by $\ell$, and in particular invertible. We deduce that $\mathcal{O}(\mathfrak{L})=\mathcal{O}'$ and the proof is concluded.
\end{proof}

\begin{remark}\label{ideal:norm}
If $\Or$ is an imaginary quadratic order of discriminant $D_{\Or}$ and $\ell$ is a prime, then there are exactly $1+\left(\frac{D_{\Or}}{\ell}\right)$ prime ideals in $\mathcal{O}$ with norm $\ell$. If $\ell | [\Or_K:\Or]$, the unique prime ideal of norm $\ell$ is not invertible. In all other cases, the primes of norm $\ell$ are always invertible.
\end{remark}

From a volcanic perspective, Lemma \ref{lem:invertible} and Remark~\ref{ideal:norm} translate in the following way.

\begin{corollary} \label{cor:horizon}
Horizontal isogenies in $\G_{\ell}(\F_p)$ can only occur between vertices at level $0$ in the volcanoes belonging to the same belt. More precisely, for every non-zero $t\in\mathbb{Z}$, for every $t$-cordillera, if $K$ is the field associated with the $t$-cordillera, then for every $m$, for every belt $\mathcal{B}_{t,m}$, for every volcano $V$ in $\mathcal{B}_{t,m}$, there are exactly
\[
1+\left(\frac{D(\mathcal{O}_K)}{\ell}\right) = \left\{\begin{array}{ll}
        0 & \text{if } \ell \text{ is inert in } K,\\
        1 & \text{if }  \ell \text{ is ramified in } K,\\
        2 & \text{if }  \ell \text{ splits in } K,\end{array}\right.
\]
distinct edges of $\G_{\ell}(\F_p)$ from $j\in{V_0}$ to other vertices in $V_0$.
\end{corollary}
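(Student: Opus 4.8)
The plan is to translate the statement, via Lemma~\ref{lem:invertible} and Remark~\ref{ideal:norm}, into a count of invertible ideals of norm $\ell$ in the relevant imaginary quadratic order, and then to evaluate that count with the Kronecker symbol. Fix a vertex $j\in\mathcal{V}_t$ with associated order $\mathcal{O}_j=\en_{\overline{\mathbb{F}}_p}(E_j)$. By the factorisation~\eqref{eq:factorization_modular_polynomial}, the directed edges of $\mathcal{G}_\ell(\mathbb{F}_p)$ leaving $j$, counted with multiplicity, correspond to the cyclic subgroups $C\subseteq E_j(\overline{\mathbb{F}}_p)$ of order $\ell$ with $j(E_j/C)\in\mathcal{V}$, each $C$ being the kernel of a degree-$\ell$ isogeny $\varphi_C\colon E_j\to E_j/C$ which, by Proposition~\ref{isogeny:type}, is horizontal, ascending, or descending. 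The first thing I would establish is that the horizontal ones are precisely those with $C=E_j[\mathfrak{L}]$ for an invertible ideal $\mathfrak{L}\subseteq\mathcal{O}_j$ of norm $\ell$: Lemma~\ref{lem:invertible}(1) gives that such an $\mathfrak{L}$ produces a horizontal $\varphi_C$, with $j(E_j/C)\in\mathbb{F}_p$ by Corollary~\ref{cor:everything_is_here} so that it is genuinely an edge of $\mathcal{G}_\ell(\mathbb{F}_p)$; Lemma~\ref{lem:invertible}(2) gives the converse; and the assignment $\mathfrak{L}\mapsto E_j[\mathfrak{L}]$ is injective because $\mathfrak{L}$ is recovered as $\mathrm{Ann}_{\mathcal{O}_j}(E_j[\mathfrak{L}])$ (a proper ideal containing the maximal ideal $\mathfrak{L}$ equals $\mathfrak{L}$). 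Hence the horizontal edges out of $j$ are in bijection with the invertible ideals of $\mathcal{O}_j$ of norm $\ell$.

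With this in hand, the qualitative assertion is immediate: if $j$ lies at level $n>0$ then $\ell\mid[\mathcal{O}_K:\mathcal{O}_j]$, so by Remark~\ref{ideal:norm} the unique prime of $\mathcal{O}_j$ of norm $\ell$ is non-invertible and $j$ has no outgoing horizontal edge; therefore a horizontal isogeny must have both endpoints at level $0$. Since a horizontal isogeny preserves the endomorphism order, it preserves the saturated order, hence the belt, and its two endpoints lie in a common connected component of $\mathcal{G}_\ell(\mathbb{F}_p)$; so it runs between level-$0$ vertices of one and the same volcano.

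For the numerical part, fix $j\in V_0$, so $\ell$ does not divide the conductor $f$ of $\mathcal{O}_j$. By Remark~\ref{ideal:norm} all $1+\left(\frac{D(\mathcal{O}_j)}{\ell}\right)$ primes of $\mathcal{O}_j$ of norm $\ell$ are invertible, and since $D(\mathcal{O}_j)=f^2D(\mathcal{O}_K)$ with $\ell\nmid f$ one has $\left(\frac{D(\mathcal{O}_j)}{\ell}\right)=\left(\frac{D(\mathcal{O}_K)}{\ell}\right)$ by multiplicativity of the Kronecker symbol; this symbol equals $-1$, $0$, or $1$ according as $\ell$ is inert, ramified, or split in $K$, which yields the three cases displayed in the statement. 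It remains to see that each such horizontal edge lands back in $V_0$: its target $E_j/C$ has CM by $\mathcal{O}_j$, so $j(E_j/C)$ lies in $\mathbb{F}_p$ (Corollary~\ref{cor:everything_is_here}), is ordinary, lies in $\mathcal{V}_t$ (Lemma~\ref{lem:cordillera_empty}(3)), lies in $\mathcal{B}_{t,m}$ (it has the same, hence the same saturated, order as $E_j$), and is joined to $j$ by $\varphi_C$; being at level $0$ in the same volcano, it lies in $V_0$.

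I expect the only genuine subtlety to be the bookkeeping when $j\in\{0,1728\}$: there $\mathrm{Aut}_{\overline{\mathbb{F}}_p}(E_j)$ strictly contains $\{\pm1\}$, so $\Phi_\ell(j,Y)$ can have repeated roots coming from distinct cyclic subgroups $C\neq C'$ with $j(E_j/C)=j(E_j/C')$; the count must be run on the subgroups $C$ themselves --- which is exactly what the edge multiplicity in $\mathcal{G}_\ell(\mathbb{F}_p)$ records --- and the annihilator argument above shows $\mathfrak{L}\mapsto E_j[\mathfrak{L}]$ remains injective regardless of extra automorphisms. In these two cases $h(\mathcal{O}_K)=1$, so $V_0$ reduces to a single vertex and the horizontal edges are loops, which is the intended reading of ``edges\dots to other vertices in $V_0$''; note also that $0$ and $1728$ themselves sit at level $0$, their CM order being maximal, so they are not exceptions to the first assertion.
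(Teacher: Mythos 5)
Your proof is correct and follows exactly the route the paper intends: the paper offers no written proof of this corollary, presenting it as the direct ``volcanic translation'' of Lemma~\ref{lem:invertible} and Remark~\ref{ideal:norm}, and your argument is precisely that translation carried out in detail (bijection between horizontal edges and invertible ideals of norm $\ell$ via the annihilator, non-invertibility of the unique prime above $\ell$ when $\ell$ divides the conductor, and the identity $\left(\frac{D(\mathcal{O}_j)}{\ell}\right)=\left(\frac{D(\mathcal{O}_K)}{\ell}\right)$ at level $0$). The extra care you take with $j\in\{0,1728\}$ and with why the target vertex lands back in the same $V_0$ is consistent with the paper's conventions and fills in steps the authors leave implicit.
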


We will now describe the structure of an isogeny volcano by analysing 
crater and lava flow separately. We start by analysing crater structure, then focus on a single belt (that is, looking at a fixed endomorphism structure), and finally we let the lava flow down, looking at class numbers at every level of the volcanoes. In other words, there is a simultaneous eruption in all volcanoes belonging to the same belt, and class number arithmetic constrains the shape to which lava solidifies!

\subsubsection{The crater}

The following proposition presents the possible craters that can occur.

\begin{proposition}\label{horizontal}
Let $\mathcal{B}_{t,m}$ be a belt in the isogeny graph $\mathcal{G}_\ell(\mathbb{F}_p)$ and let $\mathcal{C}_{t,m}$ be the subgraph of $\mathcal{B}_{t,m}$ induced by the vertices at level $0$. Let $\mathcal{O}$ be the CM order associated with any vertex in $\mathcal{C}_{t,m}$ and let $\mathfrak{L} \subseteq \mathcal{O}$ be a prime ideal dividing $\ell$. Then each connected component of $\mathcal{C}_{t,m}$ is a crater of a volcano in $\mathcal{B}_{t,m}$ and consists of:
\begin{enumerate}
    \item A single vertex if $\ell$ is inert in $K$;
    \item A single vertex with a self-loop if $\ell$ is ramified in $K$ and $\mathfrak{L}$ is principal;
    \item A single vertex with two self-loops if $\ell$ splits in $K$ and $\mathfrak{L}$ is principal;
    \item Two vertices connected with a single edge if $\ell$ ramifies in $K$ and $\mathfrak{L}$ is not principal;
    \item Two vertices connected with a double edge if $\ell$ splits in $K$ and $\mathfrak{L}$ has order 2 in the class group $\mathrm{Cl}(\mathcal{O})$;
    \item More generally, a cycle of size the order of $[\mathfrak{L}]$ in the class group $\Cl(\Or)$ if $\ell$ splits in $K$ and we are not in any of the previous cases.
\end{enumerate}
\end{proposition}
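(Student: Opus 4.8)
The plan is to reduce everything to the class-group action of Theorem~\ref{thm:main_theorem_CM} combined with the dictionary between $\ell$-isogenies and prime ideals of norm $\ell$ established in Lemma~\ref{lem:invertible} and Remark~\ref{ideal:norm}. First I would fix a vertex $j_0 \in \mathcal{C}_{t,m}$ with CM order $\mathcal{O}$; by Lemma~\ref{lem:cordillera_empty}(3) and the definition of a belt, every vertex in $\mathcal{C}_{t,m}$ at level $0$ has the same CM order $\mathcal{O}$ (the saturation business forces level-$0$ vertices inside a belt to share their endomorphism ring). Since level-$0$ vertices have conductor prime to $\ell$, no non-invertible ideal of norm $\ell$ occurs (Remark~\ref{ideal:norm}), so by Corollary~\ref{cor:horizon} the only edges leaving $j_0$ inside $V_0$ are the horizontal ones, and these are in bijection with the $1+\left(\frac{D(\mathcal{O}_K)}{\ell}\right)$ prime ideals of $\mathcal{O}$ above $\ell$; moreover the target of the edge attached to $\mathfrak{L}$ is $\mathfrak{L} \ast E_{j_0}$ (via Lemma~\ref{lem:invertible}(1), since $E/E[\mathfrak{L}]$ again has CM by $\mathcal{O}$), hence a level-$0$ vertex in the same belt.

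The inert case is then immediate: there are no primes above $\ell$, hence no horizontal edges, so each connected component of $\mathcal{C}_{t,m}$ is a single isolated vertex, which is indeed the crater of a (depth-$0$ on this belt) volcano. In the ramified case there is a unique prime $\mathfrak{L}$ above $\ell$, and $\mathfrak{L} = \overline{\mathfrak{L}}$, so the single edge from $j_0$ determined by $\mathfrak{L}$ is its own dual; if $[\mathfrak{L}]$ is trivial in $\mathrm{Cl}(\mathcal{O})$ this gives a self-loop at $j_0$ (case 2), and if $[\mathfrak{L}]$ is non-trivial — it then has order $2$ since $[\mathfrak{L}]^2 = [\mathfrak{L}\overline{\mathfrak{L}}] = [(\ell)] = 1$ — it connects $j_0$ to the distinct vertex $\mathfrak{L}\ast E_{j_0}$, and applying $\mathfrak{L}$ again returns to $j_0$, giving the two-vertex single-edge component (case 4). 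Here I should note the convention of Remark~\ref{rmk:how_to_draw}: an undirected edge records the pair consisting of an isogeny and its dual, which for a ramified prime coincide, so "single edge" is the correct bookkeeping.

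In the split case write $(\ell) = \mathfrak{L}\overline{\mathfrak{L}}$ with $\mathfrak{L} \neq \overline{\mathfrak{L}}$, and let $n$ be the order of $[\mathfrak{L}]$ in $\mathrm{Cl}(\mathcal{O})$ (equivalently the order of $[\overline{\mathfrak{L}}] = [\mathfrak{L}]^{-1}$). The two edges out of each vertex correspond to $\mathfrak{L}$ and $\overline{\mathfrak{L}}$; since the $\mathrm{Cl}(\mathcal{O})$-action is free and transitive on each geometric isogeny class (Theorem~\ref{thm:main_theorem_CM}), the orbit of $j_0$ under $[\mathfrak{L}]$ is exactly the connected component, it has size $n$, and walking along $\mathfrak{L}$-edges traces out a directed $n$-cycle $j_0 \to [\mathfrak{L}]\ast E_{j_0} \to [\mathfrak{L}]^2 \ast E_{j_0} \to \cdots \to j_0$, while the $\overline{\mathfrak{L}}$-edges run around the same cycle in the opposite direction and are the duals. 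When $n \geq 3$ the $\mathfrak{L}$- and $\overline{\mathfrak{L}}$-edges out of $j_0$ have distinct targets, so one gets a genuine $n$-cycle (case 6, and case 3 when $n=1$: both self-loops collapse onto $j_0$). The only subtlety is $n = 2$, where $[\mathfrak{L}]\ast E_{j_0} = [\overline{\mathfrak{L}}]\ast E_{j_0}$, so both outgoing edges land on the same vertex $j_1$; this yields two vertices joined by a double edge (case 5) rather than a $2$-cycle drawn as a bigon. The main obstacle is precisely handling these small-$n$ degeneracies and the $j = 0, 1728$ vertices correctly — one must check that for a \emph{regular} cordillera ($D(\mathcal{O}_K) < -4$) the vertex is a single honest curve so the class-group action applies verbatim, and invoke Propositions~\ref{prop:zero} and~\ref{prop:1728} to dispose of the exceptional $j$-invariants, where extra automorphisms can identify $\mathfrak{L}$- and $\overline{\mathfrak{L}}$-images and thereby alter the edge count. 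Everything else is a direct transcription of the free transitive class-group action into graph-theoretic language.
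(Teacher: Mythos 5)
Your argument is essentially the paper's own proof: fix a vertex, use Lemma~\ref{lem:invertible} to identify the horizontal edges with the invertible prime ideals above $\ell$, invoke the faithful transitive action of $\mathrm{Cl}(\mathcal{O})$ from Theorem~\ref{thm:main_theorem_CM} to obtain a cycle of length $\mathrm{ord}([\mathfrak{L}])$, and then sort the small cases ($n=1,2$, ramified versus split) by comparing $\mathfrak{L}$ with $\overline{\mathfrak{L}}$ and their kernels, exactly as the paper does. The only caveat is that you should not lean on Propositions~\ref{prop:zero} and~\ref{prop:1728} for the $j=0,1728$ vertices, since their proofs cite this proposition for the crater structure; instead observe directly that $E[\mathfrak{L}]$ and $E[\overline{\mathfrak{L}}]$ are $\mathcal{O}$-stable, hence fixed by all automorphisms, so the extra automorphisms cannot merge the two self-loops.
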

\begin{proof}
Fix $j(E) \in \mathcal{C}_{t,m}$. We want to analyse the connected component of $\mathcal{C}_{t,m}$ containing $j(E)$. Using Lemma \ref{lem:invertible}, there is an edge from $j(E)$ to $j(E/E[\mathfrak{L}])$. As the action described in Theorem \ref{thm:main_theorem_CM} is faithful and transitive, by iteration we obtain a cycle of length the order of the class of $\mathfrak{L}$ in $\mathrm{Cl}(\mathcal{O})$. 

A case by case analysis concludes the proof, for instance:
In the case where $\ell$ is ramified in $K$, and the unique ideal $\mathfrak{L}$ lying above $\ell$ is principal, then $\mathfrak{L}=\overline{\mathfrak{L}}$, and $\mathfrak{L}$,  $\overline{\mathfrak{L}}$ correspond to mutually dual isogenies with the same kernel. This situation is represented by a unique self-loop. In the case where $\ell$ splits into principal ideals $\mathfrak{L}, \overline{\mathfrak{L}}$ in $K$, and the ideal $\mathfrak{L}$ lying above $\ell$ is principal, then $\mathfrak{L}\neq \overline{\mathfrak{L}}$. These ideals correspond to two self-isogenies with distinct kernels. This situation is represented by two self-loops in the graph (cfr. Remark \ref{rmk:how_to_draw}).
\end{proof}

\begin{remark}
Case $(6)$ in the above proposition in fact includes also case $(5)$. However, we decided to separate the two cases in order to underline the difference between case $(4)$ and case $(5)$.
\end{remark}

See Figure~\ref{fig:craters} for a view of all possible subgraphs at level $0$.

\begin{figure}[h]
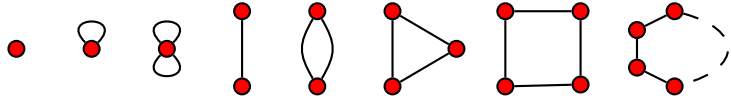

    \centering
    \scalebox{2}{\tikzfig{volcanoes/l=3/craters}}
    \caption{All possible craters.}
    \label{fig:craters}
\end{figure}

\subsubsection{The lava flows}

We now explain what the other levels of a volcano look like.

\begin{lemma}\label{l+1}
Let $E/k$ be an elliptic curve. The number of non-equivalent $\ell$-isogenies from $E$ to any other curve with $k$-rational $j$-invariant is $0,1,2$ or $\ell+1$.
\end{lemma}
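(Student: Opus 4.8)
The key observation is that the $\ell$-isogenies from $E$ up to equivalence are, by Definition~\ref{def:equivalent}, in bijection with the cyclic subgroups $C \subseteq E(\overline{k})$ of order $\ell$, and via the factorization of $\Phi_\ell(j(E), Y)$ these correspond to the roots of $\Phi_\ell(j(E), Y)$ counted with multiplicity. So I want to count how many of the $\ell+1$ cyclic subgroups $C$ give a target $j$-invariant $j(E/C)$ lying in $k$, and more precisely I want to argue that if at least three such subgroups exist, then all $\ell+1$ must work. The plan is to use the Galois action of $\mathrm{Gal}(\overline{k}/k)$ on the set $S$ of $\ell+1$ cyclic subgroups of order $\ell$: a subgroup $C$ has $j(E/C) \in k$ precisely when the pair $(E, C)$ — equivalently the corresponding point on the modular curve $\mathcal{C} = Y_0(\ell)$ — is defined over $k$ up to the ambiguity coming from $\mathrm{Aut}_{\overline{k}}(E)$; concretely, $j(E/C) \in k$ for a subgroup $C$ in the Galois orbit of $C$ iff that orbit has size $1$, or more carefully iff the whole orbit produces the same $j$-invariant.

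\textbf{Main steps.} First I would fix $E/k$ and recall that since $j(E) \in k$, the curve $E$ can be taken to be defined over $k$ (the distinction between $E$ and its twists does not affect $j(E/C)$, which only depends on the geometric data, so I may freely pass to a twist defined over $k$). Then $G = \mathrm{Gal}(\overline{k}/k)$ acts on the set $S$ of $\ell+1$ order-$\ell$ cyclic subgroups, and for $\sigma \in G$ one has $j(E/\sigma(C)) = \sigma(j(E/C))$; hence $j(E/C) \in k$ iff $j(E/\sigma(C)) = j(E/C)$ for all $\sigma$. Second, I would separate the generic case $j(E) \notin \{0, 1728\}$, where distinct subgroups in an orbit give distinct $j$-invariants (the multiplicity phenomenon in~\eqref{eq:factorization_modular_polynomial} comes only from automorphisms, and $\mathrm{Aut}_{\overline{k}}(E) = \{\pm 1\}$ acts trivially on subgroups), so that $j(E/C) \in k$ iff $C$ is $G$-fixed; thus I am counting fixed points of a group acting on an $(\ell+1)$-element set. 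Since $k$ is a finite field, $G$ is procyclic, so the action factors through a cyclic group and $S$ decomposes into orbits; the number of fixed points is the number of size-$1$ orbits. If there were exactly three fixed points, the remaining $\ell - 2$ subgroups would lie in orbits of size $\geq 2$; this is possible for many values of $\ell$, so the real content must be extracted from the algebraic structure of $E$, not pure combinatorics — I expect the argument to invoke that the $\ell$-isogenous curves with $j$-invariant in $k$ correspond to curves $\ell$-isogenous to $E$ over $k$, and that by the CM/endomorphism-ring classification (Proposition~\ref{isogeny:type} together with the structure of $\mathbb{Z}[\pi_E]$ at $\ell$) the possible local behaviours at $\ell$ are exactly: $\ell$ inert (giving $0$), $\ell$ ramified or $\ell \mid f$ boundary cases (giving $1$), $\ell$ split horizontally plus possibly one descending direction (giving $2$), or $\ell \mid [\mathcal{O}_K : \mathcal{O}]$ with $E$ at a level where all $\ell+1$ neighbours are $k$-rational (the "bottom of the volcano" / the case $\mathrm{disc} \equiv 0$ situation giving $\ell+1$).

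\textbf{Finishing and the main obstacle.} So the cleaner route, which I would actually pursue, is: let $\mathcal{O} = \mathrm{End}_{\overline{k}}(E)$ with conductor $f$, and analyze the $\ell+1$ subgroups via Lemma~\ref{lem:invertible} and Remark~\ref{ideal:norm}. If $\ell \nmid f$, the subgroups split as: the $1 + \left(\frac{D(\mathcal{O}_K)}{\ell}\right) \in \{0,1,2\}$ horizontal ones (coming from invertible prime ideals of norm $\ell$), each giving a $k$-rational target by Corollary~\ref{cor:horizon}, plus $\ell - \left(\frac{D(\mathcal{O}_K)}{\ell}\right)$ descending ones landing on curves with CM by $\mathbb{Z} + \ell f \mathcal{O}_K$; these latter targets are $k$-rational iff that order's $j$-invariants are in $k$, which by Corollary~\ref{cor:everything_is_here} they all are — wait, that would give $\ell + 1$ always, so I must be more careful: the descending targets need not be $k$-\emph{rational} in the sense required, because the issue is whether $E \to E/C$ is defined over $k$, equivalently whether $E/C$ itself (not just its $j$-invariant) descends compatibly; the honest statement is that for $\ell \mid f$ (i.e.\ $E$ not on the crater but with room to descend... or rather $E$ above the floor) the $\ell-1$ or $\ell$ subgroups "pointing downward away from the crater" reduce modulo the twisting ambiguity, and one gets $0, 1, 2,$ or $\ell+1$ by checking that the only way to exceed $2$ is the degenerate configuration where $E$ sits at a vertex all of whose $\ell$-neighbours share the same endomorphism structure, forcing $\ell+1$. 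The main obstacle I anticipate is precisely pinning down this dichotomy rigorously: showing the count cannot be $3, 4, \ldots, \ell$ by ruling out the intermediate combinatorial possibilities using the group-action rigidity (a cyclic group fixing $\geq 3$ points of $\mathbb{P}^1(\overline{k}) \supseteq S$, when $S$ carries extra modular-curve structure, must fix all of them), together with a careful treatment of the $j \in \{0, 1728\}$ exceptional cases where $\mathrm{Aut}_{\overline{k}}(E)$ is larger and permutes subgroups, changing the multiplicity bookkeeping in $\Phi_\ell(j, Y)$.
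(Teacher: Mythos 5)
There is a genuine gap, and it sits exactly at the point where you abandon the group-action route. You observe that $G=\mathrm{Gal}(\overline{k}/k)$ acts on the set $S$ of $\ell+1$ cyclic subgroups of order $\ell$, note that three fixed points plus larger orbits on the rest ``is possible for many values of $\ell$'', and conclude that ``the real content must be extracted from the algebraic structure of $E$, not pure combinatorics''. That is the wrong turn: the action of $G$ on $S$ is not an arbitrary permutation action. It is induced by the $\mathbb{F}_\ell$-linear action of $G$ on $E[\ell]\cong \mathbb{F}_\ell^2$, and the elements of $S$ are exactly the lines of this $2$-dimensional vector space, i.e.\ the points of $\mathbb{P}^1(\mathbb{F}_\ell)$ (not a subset of $\mathbb{P}^1(\overline{k})$, as your last sentence has it). A linear automorphism of a $2$-dimensional space fixing three distinct lines has two equal eigenvalues with independent eigenvectors, hence is a homothety and fixes \emph{every} line. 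So the number of $G$-stable subgroups is $0$, $1$, $2$ or $\ell+1$, and since a subgroup must be $G$-stable for the quotient to be realised over $k$, this is the whole proof; it is, in fact, verbatim the paper's proof. No analysis of endomorphism rings, conductors, or volcano levels is needed, and the lemma holds for any $k$ (your appeal to $G$ being procyclic for finite fields is also unnecessary).

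The CM-theoretic replacement you pursue instead does not close the argument, as you yourself notice midway: applying Corollary~\ref{cor:everything_is_here} to the descending targets would seem to give $\ell+1$ always, which is false. (The resolution is that the hypothesis of that corollary --- the existence of some curve over $k$ with the smaller endomorphism ring --- fails when $E$ sits at the floor of its volcano, i.e.\ when $\ell\cdot f$ does not divide the conductor of $\mathbb{Z}[\pi_E]$; but sorting this out correctly essentially amounts to re-deriving the structure theory of Proposition~\ref{descending}, which itself relies on counting isogenies, so the argument risks circularity.) Your final paragraph concedes that the dichotomy ``cannot be $3,4,\dots,\ell$'' is left unproven and names it as the anticipated obstacle; but proving precisely that rigidity statement is the entire content of the lemma, and the one-line linear-algebra fact above is what does it.
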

\begin{proof}
The kernel of an isogeny from $E$ must be a subgroup of $E[\ell]\cong \Z/\ell\Z\times\Z/\ell\Z$. For it to be an $\ell$-isogeny the subgroup must have order $\ell$. In order for the target curve to be $k$-rational, the kernel of the isogeny must be invariant under the action of the Galois group $G=\mathrm{Gal}(\overline{k}/k)$. There are $\ell+1$ subgroups of order $\ell$ in $E[\ell]$, that can be seen as lines in an $\F_{\ell}$-vector space. $G$ acts linearly, and if it fixes three or more such lines, then it acts as an homothety and hence fixes every subgroup. 
\end{proof}

\begin{proposition}\label{descending}
Let $V$ be a volcano of depth $d$ in the isogeny graph $\mathcal{G}_\ell(\mathbb{F}_p)$. Assume that the crater $V_0$ contains $j$-invariants corresponding to elliptic curves with complex multiplication by an order $\mathcal{O}$ of discriminant $D(\mathcal{O})$  with $D(\mathcal{O}) \neq -3,-4$. Then:
\begin{itemize}
    \item For every $j \in V_0$, the elliptic curve $E_j$ has, up to equivalence, $0$ descending isogenies if $d=0$ and $\left( \ell - \left( \frac{D}{\ell}\right) \right)$ descending isogenies otherwise; 
    
    \item For every $n>0$ and every $j \in V_n$, the elliptic curve $E_j$ has, up to equivalence, $\ell$ descending isogenies if $n<d$ and $0$ descending isogenies otherwise;
    
    \item For every $n>0$ and every $j \in V_n$, the elliptic curve $E_j$ has, up to equivalence, exactly $1$ ascending isogeny.
\end{itemize}
\end{proposition}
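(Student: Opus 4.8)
\textbf{Proof plan for Proposition \ref{descending}.}

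The plan is to count, for a curve $E_j$ with CM by an order $\mathcal{O}=\mathbb{Z}+f\mathcal{O}_K$, the $\ell+1$ cyclic subgroups of order $\ell$ of $E_j$ according to the type of the associated isogeny (horizontal, ascending, descending), using Lemma \ref{lem:invertible}, Remark \ref{ideal:norm}, and Corollary \ref{cor:horizon}. The key observation is that an $\ell$-isogeny out of $E_j$ lands on a curve with $k$-rational $j$-invariant precisely when its kernel is Galois-stable, and the conditions $D(\mathcal{O})\neq -3,-4$ on the crater order guarantee that the stabilisers are as small as possible, so every class in $\mathrm{Cl}(\mathcal{O})$ (or in the relevant quotient) is represented by a single subgroup; this rules out the coincidences that occur for $j=0,1728$. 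First I would treat the case $n>0$, i.e.\ a vertex strictly inside the lava flow, where $\ell\mid f$: here the unique prime of $\mathcal{O}$ above $\ell$ is non-invertible, and by Lemma \ref{lem:invertible}(3) the corresponding quotient lifts the endomorphism ring to the order of index $\ell$ above $\mathcal{O}$, giving the single ascending isogeny. The remaining $\ell$ subgroups must then, by Proposition \ref{isogeny:type} and the count in Lemma \ref{l+1}, all be descending; to see that each target has a $k$-rational $j$-invariant (so all $\ell$ survive in $\mathcal{G}_\ell(\mathbb{F}_p)$) one invokes Corollary \ref{cor:everything_is_here} applied to the order of index $\ell$ below $\mathcal{O}$, whose class number is exactly $\ell$ times that of $\mathcal{O}$ when $D(\mathcal{O})\neq -3,-4$. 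If $n<d$ this produces $\ell$ descending isogenies; if $n=d$ the volcano has no deeper level, so by definition of depth none of the $\ell$ targets lie in $V$, but they still exist in the graph—so I would need to argue instead via the class-number recursion (or via Lemma \ref{lem:invertible}(3) going the other way) that the $\ell$ non-ascending subgroups have non-$\mathbb{F}_p$-rational $j$-invariants when $n=d$; this is the point where one uses maximality of $d$ together with Lemma \ref{lem:cordillera_empty}(3).

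For the crater, $n=0$, the vertex has CM by $\mathcal{O}$ with $\ell\nmid f$, so all $1+\left(\frac{D(\mathcal{O}_K)}{\ell}\right)$ primes of norm $\ell$ are invertible; these account for the horizontal isogenies counted in Corollary \ref{cor:horizon}. If $d=0$ there are no levels below $0$ and the remaining $\ell+1-\left(1+\left(\frac{D(\mathcal{O}_K)}{\ell}\right)\right)$ subgroups have targets that are not $\mathbb{F}_p$-rational ordinary invariants in the belt, hence contribute $0$ descending isogenies—again an appeal to class numbers in the order of index $\ell$ (whose curves cannot appear since $d=0$ means the belt is exhausted at level $0$). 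If $d>0$ then these $\ell+1-\left(1+\left(\frac{D}{\ell}\right)\right)=\ell-\left(\frac{D}{\ell}\right)$ remaining subgroups are exactly the descending ones, and one checks their targets are $\mathbb{F}_p$-rational via Corollary \ref{cor:everything_is_here} applied to $\mathbb{Z}+\ell f\mathcal{O}_K$, which has class number $\left(\ell-\left(\frac{D}{\ell}\right)\right)h(\mathcal{O})$.

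Finally the ascending count: for $n>0$, $\ell\mid f$, Remark \ref{ideal:norm} says there is a \emph{unique} prime ideal of norm $\ell$ and it is non-invertible; Lemma \ref{lem:invertible}(3) identifies its quotient as the curve with CM by the order of index $\ell$ in $\mathcal{O}$, i.e.\ one step up the volcano, and this target automatically has $\mathbb{F}_p$-rational $j$-invariant by Lemma \ref{lem:cordillera_empty}(3) since it lies in the same cordillera. No other subgroup can be ascending, because any ascending isogeny has kernel $E_j[\mathfrak{L}]$ for the non-invertible $\mathfrak{L}$ by the dual of Lemma \ref{lem:invertible}(1)--(2), and that ideal is unique. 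The main obstacle I anticipate is the bookkeeping at the boundary level $n=d$ (and the twin case $d=0$ for the crater): there one must show the putative descending targets genuinely fail to be $\mathbb{F}_p$-rational ordinary $j$-invariants, rather than merely lying outside the connected component, and this is where the hypothesis $D(\mathcal{O})\neq -3,-4$ is essential to avoid the automorphism-induced collapses and to keep the class-number multiplicativity $h(\mathbb{Z}+\ell f\mathcal{O}_K)=\left(\ell-\left(\frac{D}{\ell}\right)\right)h(\mathbb{Z}+f\mathcal{O}_K)$ exact.
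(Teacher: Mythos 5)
Your overall strategy (classify the $\ell+1$ order-$\ell$ subgroups via Lemma \ref{lem:invertible}, Remark \ref{ideal:norm} and Corollary \ref{cor:horizon}, then use Corollary \ref{cor:everything_is_here} and the class number formula to control rationality of the targets) is the right one, and it is close in spirit to the paper's. But the paper organises these ingredients differently, as an induction on the level $n$ driven by a \emph{global} double count of the edges between consecutive levels: by duality, the total number of descending isogenies from level $n$ equals the total number of ascending isogenies from level $n+1$; each level-$(n+1)$ curve has \emph{at least} one ascending isogeny by Lemma \ref{lem:invertible}(3), each level-$n$ curve has \emph{at most} $\ell-\left(\frac{D}{\ell}\right)$ (resp.\ $\ell$) descending isogenies after subtracting the horizontal and ascending ones from $\ell+1$, and the identity $h(\mathcal{O}')=\left(\ell-\left(\frac{D}{\ell}\right)\right)h(\mathcal{O})$ (resp.\ $h(\mathcal{O}'')=\ell\,h(\mathcal{O}')$) from \cite[Corollary 7.28]{Cox97} forces both inequalities to be equalities simultaneously. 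This single counting argument delivers at once the exact descending count at level $n$ and the uniqueness of the ascending isogeny at level $n+1$.

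Your purely local version has two concrete gaps. First, you assert that every ascending isogeny out of a level-$n>0$ vertex has kernel $E_j[\mathfrak{L}]$ for the unique non-invertible ideal $\mathfrak{L}$, citing ``the dual of Lemma \ref{lem:invertible}(1)--(2)''; but parts (1)--(2) of that lemma are stated only for isogenies between curves with the \emph{same} endomorphism ring, so they say nothing about ascending isogenies. To make your argument work you would have to prove separately that the kernel of an ascending isogeny is stable under $\mathcal{O}=\mathrm{End}(E_j)$ (so that it is of the form $E_j[\mathfrak{a}]$ with $\mathfrak{a}$ necessarily non-invertible, hence unique). That fact is true and standard, but it is not among the tools you invoke; the paper's duality-plus-counting argument avoids needing it at all. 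Second, you correctly identify the boundary cases ($n=d$, and $d=0$ on the crater) as the delicate point but leave them unresolved: one must show that the putative descending targets are not vertices of $\mathcal{G}_\ell(\mathbb{F}_p)$ at all, which follows because the order of index $\ell$ below the level-$d$ order no longer contains $\mathbb{Z}[\pi]$, so by Lemma \ref{lem:cordillera_empty} (equivalently, Deuring/Waterhouse) no curve over $\mathbb{F}_p$ in this cordillera realises it. As written, your plan names the obstacle without supplying this argument, so the proof is incomplete at exactly the steps where the statement could fail.
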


\begin{proof}
The proof of this proposition is essentially contained in the second part of the proof of \cite[Lemma 6]{Sut13}. We sketch the argument here.

Let $\mathcal{B}_{t,m}$ be the belt containing the volcano $V$. We determine the vertical isogenies between the elliptic curves corresponding to the vertices in $\mathcal{B}_{t,m}$ by induction on the level $n$ of the vertices themselves.

If $n=0$, all the elliptic curves corresponding to the vertices at level $0$ of $\mathcal{B}_{t,m}$ have, by Corollary \ref{cor:horizon}, exactly $1+\left(\frac{D(\mathcal{O}_K)}{\ell}\right)$ horizontal isogenies. Note that, since the order of discriminant $D$ must be $\ell$-dry by the level-$0$ assumption, we have $\left(\frac{D(\mathcal{O}_K)}{\ell}\right)=\left(\frac{D(\mathcal{O})}{\ell}\right)$. The maximal possible number of non-equivalent isogenies from every $E_j$ is $\ell+1$, so the maximal possible number of non-equivalent descending isogenies from every $E_j$ is 
\[
(\ell+1) - \left( 1+ \left(\frac{D(\mathcal{O})}{\ell}\right) \right)=\ell - \left(\frac{D(\mathcal{O})}{\ell}\right).
\]
If $d=0$, there are none. If $d>0$, then by Corollary \ref{cor:everything_is_here} there are exactly $h(\mathcal{O}')$ vertices in $\mathcal{B}_{t,m}$ at level $1$, where $\mathcal{O}'$ is the unique order contained in $\mathcal{O}$ with index $\ell$. Using \cite[Corollary 7.28]{Cox97} and the fact that $D\neq -3,-4$ we have
\[
h(\mathcal{O}')=h(\mathcal{O}) \left( \ell - \left(\frac{D(\mathcal{O})}{\ell}\right) \right).
\]
By Lemma \ref{lem:invertible}, each of the $h(\mathcal{O})\left( \ell - \left(\frac{D(\mathcal{O})}{\ell}\right) \right)$ elliptic curves with CM by $\mathcal{O}'$ admit a vertical isogeny. Since, by duality and the fact that $D(\mathcal{O}) \neq -3,-4$, the number of ascending isogenies from level $1$ to level $0$ is the same as the number of descending isogenies from level $0$ to level $1$, a counting argument shows that every elliptic curve $E_j$ at level $0$ has, up to equivalence, $\left( \ell - \left( \frac{D(\mathcal{O})}{\ell}\right) \right)$ descending isogenies and every elliptic curve $E_j$ at level $1$ has, up to equivalence, precisely $1$ vertical isogeny.

If $n=1$, there are no horizontal isogenies by Corollary \ref{cor:horizon}. Hence, all elliptic curves corresponding to the vertices at level $1$ of $\mathcal{B}_{t,m}$ have one ascending isogeny (by the previous discussion) and $(\ell + 1) -1 =\ell$ descending isogenies if $d>1$, no descending isogenies otherwise. Suppose then that $d>1$. Then by Corollary \ref{cor:everything_is_here} there are exactly $h(\mathcal{O}'')$ vertices in $\mathcal{B}_{t,m}$ at level $2$, where $\mathcal{O}''$ is the unique order contained in $\mathcal{O}'$ with index $\ell$. Using \cite[Corollary 7.28]{Cox97} we see that
\[
h(\mathcal{O}'')= \ell h(\mathcal{O}').
\]
By Lemma \ref{lem:invertible}, each of the $\ell h(\mathcal{O}')$ elliptic curves with CM by $\mathcal{O}''$ admits a vertical isogeny. As above we conclude that every elliptic curve $E_j$ at level $1$ has, up to equivalence, $\ell$ descending isogenies and every elliptic curve $E_j$ at level $2$ has, up to equivalence, precisely $1$ vertical isogeny. An easy induction now leads to the claim. 
\end{proof}
% \begin{proof}
% This proof is essentially a counting argument, using formulas already established in Proposition~\ref{counting:belt}. Fix the belt $\mathcal{B}_{t,m}$ that corresponds to $E$. Note that no $\ell$-isogeny can link $\mathcal{B}_{t,m}$ to another belt, and recall that every ascending $\ell$-isogeny has a dual descending $\ell$-isogeny. Therefore using Proposition~\ref{ascending} in $\mathcal{B}_{t,m}$ for all $0<n<d=v_{\ell}([\Or_K:\Z[\pi_t]])$, all $h_{n+1}=h(m^2\ell^{2(n+1)}D_K)$ vertices at level $n+1$ are linked to at least one of the $h_n=h(m^2\ell^{2n}D_K)=\frac{h_{n+1}}{\ell}$ vertices at level $n$, therefore level $n$ has at least total descending degree of $\ell h_n$, but using Proposition~\ref{ascending} and Lemma~\ref{l+1} on level $n$ we know that the maximal descending degree of a vertex on level $n$ is $\ell$, therefore it is exactly $\ell$. The same applies between level $1$ and level $0$ only that now $h_1=\left(\ell-\left(\frac{D_{\Or}}{\ell}\right)\right)h_0$, and with Corollary~\ref{sat}, Proposition~\ref{horizontal}, vertices on level $0$ can only have $\left(\ell-\left(\frac{D_{\Or}}{\ell}\right)\right)$ descending isogenies, which now has to be exactly the case.     
% \end{proof}

It is now time to deal with the pathological volcanoes containing $0$ and $1728$. We warm up with the following lemma.

\begin{lemma}\label{lem:annul}
Suppose that $0 \in \mathbb{F}_p$ or $1728\in\mathbb{F}_p$  is the $j$-invariant of an ordinary elliptic curve $E$ and let $\ell \neq p$ be a prime. Suppose that there exists a cyclic subgroup $H \subseteq E(\overline{\mathbb{F}}_p) $ of order $\ell$ which is fixed by all the automorphisms of $E$. Then there exists an ideal $\mathfrak{L} \subseteq \mathrm{End}_{\overline{\mathbb{F}}_p}(E)$ of norm $\ell$ such that $H=E[\mathfrak{L}]$.
\end{lemma}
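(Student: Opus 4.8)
The plan is to analyze the situation using the explicit structure of the endomorphism ring $\mathcal{O} = \mathrm{End}_{\overline{\mathbb{F}}_p}(E)$, which for $j(E) \in \{0, 1728\}$ is either $\mathbb{Z}[\zeta_3]$ or $\mathbb{Z}[i]$ (the conductor is $1$ since these orders have class number one and, by Lemma~\ref{lem:conductor_not_divisible_by_p}, the conductor is prime to $p$; moreover any proper suborder would push $j$ away from $0$ or $1728$). First I would set $\mathcal{O} = \mathbb{Z}[\omega]$ with $\omega = \zeta_3$ or $\omega = i$, so that $\mathrm{Aut}_{\overline{\mathbb{F}}_p}(E) = \mathcal{O}^\times$ is cyclic of order $6$ or $4$. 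The hypothesis is that $H$ is a cyclic subgroup of order $\ell$ stable under the action of every unit $u \in \mathcal{O}^\times$ on $E(\overline{\mathbb{F}}_p)$.

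Next I would consider the ideal $\mathfrak{L} := \{ f \in \mathcal{O} : f(H) = 0 \}$, exactly as in the proof of Lemma~\ref{lem:invertible}(2): this is the annihilator ideal $\mathrm{Ann}_\mathcal{O}(H)$, and $H \subseteq E[\mathfrak{L}]$ automatically. The goal is to show $\mathfrak{L}$ has norm $\ell$, equivalently $\#(\mathcal{O}/\mathfrak{L}) = \ell$, which combined with $H \subseteq E[\mathfrak{L}]$ and $\# E[\mathfrak{L}] = \#(\mathcal{O}/\mathfrak{L})$ (valid since $\ell \neq p$) forces $H = E[\mathfrak{L}]$. Since $H$ has order $\ell$, it is killed by multiplication by $\ell$, so $\ell \in \mathfrak{L}$ and hence $\mathfrak{L}$ contains $(\ell)$; thus $\mathcal{O}/\mathfrak{L}$ is a quotient of $\mathcal{O}/\ell\mathcal{O}$, a ring with $\ell^2$ elements. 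So $N(\mathfrak{L}) \in \{1, \ell, \ell^2\}$. The case $N(\mathfrak{L}) = \ell^2$ means $\mathfrak{L} = (\ell)$, which would give $E[\mathfrak{L}] = E[\ell] \supseteq H$ with no extra information, but actually $\mathfrak{L} = \mathrm{Ann}_\mathcal{O}(H)$ and $H$ is cyclic of order $\ell < \ell^2$, so $H \neq E[\ell]$ and some endomorphism not in $(\ell)$ kills $H$ — ruling this out requires showing $H$ really is an $\mathcal{O}$-submodule, which is where the automorphism hypothesis enters. The case $N(\mathfrak{L}) = 1$ means $\mathfrak{L} = \mathcal{O}$, impossible since $1 \notin \mathrm{Ann}_\mathcal{O}(H)$ as $H \neq 0$.

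The key point, and the main obstacle, is upgrading ``$H$ is stable under all of $\mathrm{Aut}_{\overline{\mathbb{F}}_p}(E) = \mathcal{O}^\times$'' to ``$H$ is an $\mathcal{O}$-submodule of $E(\overline{\mathbb{F}}_p)$.'' For this I would use that $\mathcal{O}$ is generated as a $\mathbb{Z}$-algebra by the units: indeed $\mathbb{Z}[\zeta_3] = \mathbb{Z}[\zeta_3^\times]$ since $1 - \zeta_3$ generates a prime above $3$ and more simply $\zeta_3 \in \mathcal{O}^\times$ itself generates $\mathcal{O}$ over $\mathbb{Z}$; likewise $i \in \mathcal{O}^\times$ generates $\mathbb{Z}[i]$ over $\mathbb{Z}$. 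Since $H$ is a subgroup (hence $\mathbb{Z}$-stable) and stable under the generating unit $\omega$, it is stable under the whole $\mathbb{Z}$-algebra $\mathbb{Z}[\omega] = \mathcal{O}$, i.e. $H$ is an $\mathcal{O}$-submodule. Now $H$ is a nonzero cyclic $\mathbb{Z}$-module of prime order $\ell$ which is also an $\mathcal{O}$-module; as an $\mathcal{O}$-module it is generated by any nonzero element $P \in H$, so $H \cong \mathcal{O}/\mathrm{Ann}_\mathcal{O}(P) = \mathcal{O}/\mathfrak{L}$, whence $\#(\mathcal{O}/\mathfrak{L}) = \#H = \ell$, i.e. $N(\mathfrak{L}) = \ell$. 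Combined with $H \subseteq E[\mathfrak{L}]$ and $\#E[\mathfrak{L}] = N(\mathfrak{L}) = \ell$ (using $\ell \neq p$, via \cite[Proposition 3.9]{Wat69} as in Lemma~\ref{lem:invertible}), we conclude $H = E[\mathfrak{L}]$, finishing the proof. I expect the only genuinely delicate bookkeeping to be confirming that $\mathcal{O}$ is indeed the full endomorphism ring (conductor $1$) and that $\omega$ generates $\mathcal{O}$ over $\mathbb{Z}$ in both cases — both of which are elementary.
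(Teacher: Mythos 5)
Your proposal is correct and follows essentially the same route as the paper: identify $\mathcal{O}=\mathrm{End}_{\overline{\mathbb{F}}_p}(E)$ with $\mathbb{Z}[\zeta_3]$ or $\mathbb{Z}[i]$, observe that a generating unit makes stability under $\mathrm{Aut}$ equivalent to $H$ being an $\mathcal{O}$-submodule, and then take $\mathfrak{L}=\mathrm{Ann}_\mathcal{O}(H)$ so that $H\cong\mathcal{O}/\mathfrak{L}$ has order $\ell$ and $H=E[\mathfrak{L}]$ by a cardinality count. You are merely more explicit than the paper on the key point that $\mathcal{O}$ is generated over $\mathbb{Z}$ by its units, which the paper states in one line.
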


\begin{proof}
We give the details for $0$, the proof is similar for $1728$. There is an isomorphism $\mathcal{O}:= \mathrm{End}_{\overline{\mathbb{F}}_p}(E) \cong  \mathbb{Z}[\zeta_6]$ where $\zeta_6 \in \overline{\mathbb{Q}}$ is a primitive $6$-th root of unity. In particular, $\mathrm{Aut}_{\overline{\mathbb{F}}_p}(E) \cong  \langle \zeta_6 \rangle$. Hence, if $H$ is fixed by all the automorphisms of $E$, then it is in fact fixed by all its endomorphisms. We deduce that $H$ is a cyclic $\mathcal{O}$-module of order $\ell$. We then have an $\mathcal{O}$-module isomorphism
\[
\mathcal{O}/\mathrm{Ann}_\mathcal{O}(H) \cong H
\]
where $\mathrm{Ann}_\mathcal{O}(H)$ is the annihilator of $H$ in $\mathcal{O}$. The isomorphism above shows that $\mathfrak{L}:=\mathrm{Ann}_\mathcal{O}(H)$ is an ideal of $\mathcal{O}$ of norm $\ell$ and $H \subseteq E[\mathfrak{L}]$. Since these two groups have the same cardinality, we deduce that $H = E[\mathfrak{L}]$, as wanted.
\end{proof}

We now describe the directed neighbourhood of the vertex $0$.

\begin{proposition}\label{prop:zero}
Suppose that $0 \in \mathbb{F}_p$ is the $j$-invariant of an ordinary elliptic curve $E$ and let $\ell \neq p$ be a prime. In the volcano containing $0$, the directed subgraph induced by the neighbours of $0$ at level zero and level 1 can be described as:
\begin{itemize}
    \item If $\ell=3$: the vertex $0$ has one self-loop, three descending isogenies towards a unique vertex $j$ at level 1, and there is one unique isogeny from $j$ to $0$;
    \item If $\ell\equiv 1 \text{ mod } 3$: the vertex $0$ has two self-loops. Level 1 has either zero or $(\ell-1)/3$ vertices. In the latter case, each of these vertices receives three descending isogenies from $0$ and sends one ascending isogeny to $0$;
    \item If $\ell\equiv 2 \text{ mod } 3$, the vertex $0$ has no self-loop. Level 1 has either zero or $(\ell+1)/3$ vertices. In the latter case, each of these vertices receives three descending isogenies from $0$, and sends one ascending isogeny to $0$.
\end{itemize}
\end{proposition}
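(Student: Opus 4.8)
The plan is to reduce the statement about the directed neighbourhood of $0$ to the horizontal and vertical isogeny counts already established, being careful throughout about the non-trivial automorphism group of $E$. First I would record the basic CM data: since $0$ is an ordinary $j$-invariant we have $\mathcal{O}:=\mathrm{End}_{\overline{\mathbb{F}}_p}(E)\cong \mathbb{Z}[\zeta_6]$, which is the maximal order of $K=\mathbb{Q}(\sqrt{-3})$, it has class number one, and by Lemma \ref{lem:normeq_bad_cases} (i) the prime $p$ splits in $K$ (this is automatic, since $E$ is ordinary). The level-$0$ behaviour is governed by the splitting of $\ell$ in $K$: $\ell$ is ramified iff $\ell=3$, split iff $\ell\equiv 1\pmod 3$, inert iff $\ell\equiv 2\pmod 3$, via $\left(\frac{-3}{\ell}\right)$. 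By Corollary \ref{cor:horizon}, the number of horizontal edges at $0$ is $1+\left(\frac{-3}{\ell}\right)$, which is $1,2,0$ in the three cases. Now I must upgrade this count of \emph{edges in $\mathcal{G}_\ell$} (which records classes of cyclic subgroups up to the action of $\mathrm{Aut}(E)$) to a statement about self-loops: in the ramified case the unique prime $\mathfrak{L}$ above $\ell$ is principal (class number one), equals its conjugate, and $E[\mathfrak{L}]$ is fixed by all automorphisms by Lemma \ref{lem:annul}, giving exactly one self-loop; in the split case the two primes $\mathfrak{L},\overline{\mathfrak{L}}$ are principal, distinct, and each $\mathfrak{L}$-torsion and $\overline{\mathfrak{L}}$-torsion subgroup is automorphism-invariant by Lemma \ref{lem:annul}, giving two self-loops; in the inert case there is no horizontal isogeny and hence no self-loop. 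This is exactly the self-loop count claimed, and it mirrors the case analysis in Proposition \ref{horizontal}.

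Next I would handle the descending isogenies from $0$ and the size of level $1$. Here the key point is that the automorphism group $\langle\zeta_6\rangle$ acts on the $\ell+1$ cyclic subgroups of $E[\ell]$, fixing exactly the (zero, one or two) subgroups of the form $E[\mathfrak{L}]$ with $\mathfrak{L}\mid \ell$ invertible — these are the horizontal ones by Lemma \ref{lem:invertible}, and they are automorphism-invariant by Lemma \ref{lem:annul} — and acting \emph{freely} on the remaining subgroups, which are the kernels of descending isogenies. Indeed any automorphism fixing a descending kernel $C$ would, by Lemma \ref{lem:annul}, force $C=E[\mathfrak{L}]$ for an ideal $\mathfrak{L}$ of norm $\ell$; but then $\mathfrak{L}$ would have to be non-invertible, impossible since $\ell\nmid f=1$ (Remark \ref{ideal:norm}). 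Thus $\langle\zeta_6\rangle/\{\pm1\}\cong\mathbb{Z}/3$ acts freely on the set of descending kernels, which therefore has cardinality divisible by $3$. Its size is $(\ell+1)-\bigl(1+\left(\frac{-3}{\ell}\right)\bigr)=\ell-\left(\frac{-3}{\ell}\right)$, so the number of descending kernels is $3$ when $\ell=3$, $\ell-1$ when $\ell\equiv 1\pmod 3$ (which is $3\cdot\frac{\ell-1}{3}$), and $\ell+1$ when $\ell\equiv 2\pmod 3$ (which is $3\cdot\frac{\ell+1}{3}$). The number of \emph{edges} in $\mathcal{G}_\ell$ from $0$ downward is the number of orbits, namely $1$, $\frac{\ell-1}{3}$, $\frac{\ell+1}{3}$ respectively — but each such edge in the multigraph is recorded with its multiplicity, which is the orbit size $3$; this is precisely the assertion ``three descending isogenies towards each level-$1$ vertex''. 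Whether level $1$ is empty or not depends on whether the order $\mathcal{O}'=\mathbb{Z}+\ell\mathcal{O}_K$ of index $\ell$ is realised in $\mathcal{G}_\ell(\mathbb{F}_p)$, i.e. on whether $\ell\mid v'$ in the notation of Section \ref{subsec:belts}; when it is, Corollary \ref{cor:everything_is_here} gives $h(\mathcal{O}')$ vertices at level $1$, and one computes $h(\mathcal{O}')=\frac{1}{3}\bigl(\ell-\left(\frac{-3}{\ell}\right)\bigr)\,h(\mathcal{O})\cdot[\mathcal{O}_K^\times:\mathcal{O}'^\times]$; since $h(\mathcal{O})=1$ and the unit index is $3$ (as $\mathcal{O}_K^\times=\langle\zeta_6\rangle$ has order $6$ while $\mathcal{O}'^\times=\{\pm1\}$), this is $\frac{\ell-1}{3}$ in the split case and $\frac{\ell+1}{3}$ in the inert case, and for $\ell=3$ one checks directly $h(\mathbb{Z}+3\mathbb{Z}[\zeta_6])=1$, giving the unique level-$1$ vertex.

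Finally, the ascending isogeny from each level-$1$ vertex $j$ back to $0$: a curve $E_j$ at level $1$ has $\mathrm{End}(E_j)\cong\mathcal{O}'$ with $\mathcal{O}'^\times=\{\pm1\}$, so the argument of Proposition \ref{descending} applies verbatim at this vertex — there is exactly one subgroup of $E_j[\ell]$ whose quotient climbs to level $0$ (the kernel $E_j[\mathfrak{L}]$ for $\mathfrak{L}=(\ell,\ell\omega)$ the unique non-invertible prime above $\ell$, by Lemma \ref{lem:invertible}(3)), and since $\mathrm{Aut}(E_j)=\{\pm1\}$ acts trivially on subgroups this is a single edge. Counting edges both ways (three down from $0$ to $j$ in the multigraph, one up from $j$ to $0$) is consistent: the total count of descending edge-multiplicity from $0$ is $\ell-\left(\frac{-3}{\ell}\right)$, matching $h(\mathcal{O}')$-many upward edges of multiplicity one times the ratio, exactly as in the counting argument of Proposition \ref{descending}. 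The main obstacle, and the only place real care is needed, is the bookkeeping in the second paragraph: one must consistently distinguish \emph{subgroups} of $E[\ell]$ from \emph{edges} of $\mathcal{G}_\ell(\mathbb{F}_p)$ (which are $\mathrm{Aut}(E)$-orbits of subgroups, counted with multiplicity equal to the orbit size), and verify that $\mathrm{Aut}(E)$ acts freely on the descending kernels while fixing the horizontal ones — everything else is a direct application of Lemma \ref{lem:invertible}, Lemma \ref{lem:annul}, Corollary \ref{cor:horizon}, Corollary \ref{cor:everything_is_here} and Cox's class number formula, as in Proposition \ref{descending}.
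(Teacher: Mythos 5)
Your proof is correct and follows essentially the same route as the paper's: both arguments classify the orbits of $\mathrm{Aut}_{\overline{\mathbb{F}}_p}(E)\cong\mu_6$ on the $\ell+1$ cyclic subgroups of $E[\ell]$ via Lemma \ref{lem:annul} (singleton orbits are the $E[\mathfrak{L}]$ giving horizontal edges, size-$3$ orbits give the descending edges of multiplicity $3$), count the level-$1$ vertices as $h(\mathcal{O}')=\tfrac{1}{3}\bigl(\ell-\bigl(\tfrac{-3}{\ell}\bigr)\bigr)$ by Corollary \ref{cor:everything_is_here} and \cite[Corollary 7.28]{Cox97}, and conclude by matching the total of $\ell-\bigl(\tfrac{-3}{\ell}\bigr)$ oriented descending edges against the vertex count. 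The only blemish is your displayed class-number formula, which as written multiplies by the unit index $[\mathcal{O}_K^\times:\mathcal{O}'^\times]=3$ instead of dividing by it; the value $\tfrac{1}{3}\bigl(\ell-\bigl(\tfrac{-3}{\ell}\bigr)\bigr)$ you actually use is the correct one.
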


\begin{proof}
The structure of the crater is already described in Proposition \ref{horizontal}. If level $1$ is empty we are done. We then assume that there is at least one $j$-invariant at level $1$.

Let $\mu_6$ be the group of $\overline{\mathbb{F}}_p$-automorphisms of $E$. It acts on the set $S$ of subgroups of order $\ell$ in $E$. The orbits of this action can be described by means of Lemma \ref{lem:annul}: if $H \in S$ is of the form $H=E[\mathfrak{L}]$ for some ideal $\mathfrak{L} \subseteq \mathbb{Z}[\zeta_3]$ of norm $\ell$ then its orbit is a singleton, otherwise the orbit of $H$ contains three elements. Each singleton orbit gives rise to a self-loop around $0$ thanks to Proposition \ref{horizontal}, while each orbit of cardinality 3 gives rise to three directed edges (descending isogenies) from $0$ towards the same $j$-invariant in level $1$ by Lemma \ref{lem:invertible}. Note also that there is a directed edge from each $j$-invariant at level $1$ to $0$ by Lemma \ref{lem:invertible}. Dualising this isogeny, we see that there is at least one, hence three, directed edges from $0$ to each $j$-invariant at level $1$. Let $\mathcal{O}$ be the order corresponding to the vertices at level $1$. The number of vertices on this level is given by the class number $h(\mathcal{O})$ (see Corollary \ref{cor:everything_is_here}), which in turn is equal to
\[
h(\mathcal{O})=\frac{1}{3} \left( \ell - \left(\frac{-3}{\ell}\right) \right)
\] 
by \cite[Corollary 7.28]{Cox97}. On the other hand, the total number of oriented edges from level $0$ to level $1$ is precisely $\ell-\left(\frac{-3}{\ell} \right)$. This, together with the discussion above, implies that for each vertex $j$ at level $1$ there are exactly three directed edges from $0$ to $j$ in the isogeny graph. By counting, one also easily sees that there is exactly one ascending isogeny from each vertex at level $1$ to $0$.

\end{proof}

\begin{figure}[h]
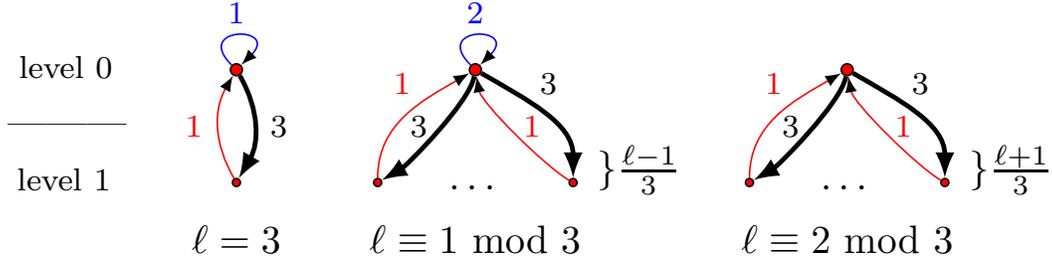

    \centering
    \scalebox{1.5}{\tikzfig{fig/example0_3}}
    \scalebox{1.5}{\tikzfig{fig/example0_1}} 
    \scalebox{1.5}{\tikzfig{fig/example0_2}}
    \caption{All possible neighbourhoods of depth 1 around $0$. Numbers next to directed edges represent multiplicities.}
    \label{fig:neighbours0}
\end{figure}

\begin{proposition}\label{prop:1728}
Suppose that $1728 \in \mathbb{F}_p$ is the $j$-invariant of an ordinary elliptic curve $E$ and let $\ell \neq p$ be a prime. In the volcano containing $0$, the directed subgraph induced by the neighbours of $0$ at level zero and level 1 can be described as:
\begin{itemize}
    \item If $\ell=2$: the vertex $1728$ has one self-loop, two descending isogenies towards a unique vertex $j$ at level 1, and there is one unique isogeny from $j$ to $1728$;
    \item If $\ell\equiv 1 \text{ mod } 4$: the vertex $1728$ has two self-loops. Level 1 has either zero or $(\ell-1)/2$ vertices. In the latter case, each of these vertices receives two descending isogenies from $1728$ and sends one ascending isogeny to $1728$;
    \item If $\ell\equiv 3 \text{ mod } 4$, the vertex $1728$ has no self-loop. Level 1 has either zero or $(\ell+1)/2$ vertices. In the latter case, each of these vertices receives two descending isogenies from $1728$, and sends one ascending isogeny to $1728$.
\end{itemize}
\end{proposition}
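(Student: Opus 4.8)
The plan is to mirror, essentially verbatim, the argument used for Proposition~\ref{prop:zero}, replacing the role of $\mathbb{Z}[\zeta_3]$ (and its group of units $\mu_6$) with $\mathbb{Z}[i]$ (and its group of units $\mu_4 = \langle i \rangle$). First I would dispose of the crater: its shape is already given by Proposition~\ref{horizontal}, and the three cases ($\ell = 2$ ramified, $\ell \equiv 1 \bmod 4$ split, $\ell \equiv 3 \bmod 4$ inert) correspond exactly to the three bullet points through the Legendre-symbol value $\left(\frac{-4}{\ell}\right) = \left(\frac{-1}{\ell}\right)$. If level~$1$ is empty there is nothing more to prove, so I would assume at least one $j$-invariant sits at level~$1$.

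Next I would analyse the action of $\mathrm{Aut}_{\overline{\mathbb{F}}_p}(E) \cong \mu_4$ on the set $S$ of order-$\ell$ subgroups of $E(\overline{\mathbb{F}}_p)$. By Lemma~\ref{lem:annul}, a subgroup $H \in S$ has a singleton orbit precisely when $H = E[\mathfrak{L}]$ for an ideal $\mathfrak{L} \subseteq \mathbb{Z}[i]$ of norm $\ell$; otherwise its orbit has cardinality $|\mu_4 / \{\pm 1\}| = 2$. (Here the relevant quotient is by $\pm 1$ since $-1$ acts trivially on every subgroup.) Singleton orbits produce self-loops at $1728$ via Proposition~\ref{horizontal}; orbits of size $2$ produce two directed edges (descending isogenies) from $1728$ to a common level-$1$ vertex via Lemma~\ref{lem:invertible}. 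Conversely each level-$1$ vertex $j$ admits an ascending isogeny to $1728$ by Lemma~\ref{lem:invertible}, and dualising gives at least one — hence, by the orbit count, exactly two — descending edges from $1728$ to $j$.

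Finally I would close the count. Let $\mathcal{O}$ be the order attached to level~$1$; by Corollary~\ref{cor:everything_is_here} the number of level-$1$ vertices is $h(\mathcal{O})$, and by \cite[Corollary 7.28]{Cox97} applied to the order of discriminant $-4$ one gets
\[
h(\mathcal{O}) = \tfrac{1}{2}\left( \ell - \left(\tfrac{-4}{\ell}\right) \right),
\]
which is $1$ when $\ell = 2$, $(\ell-1)/2$ when $\ell \equiv 1 \bmod 4$, and $(\ell+1)/2$ when $\ell \equiv 3 \bmod 4$. On the other hand the total number of oriented edges from level~$0$ to level~$1$ is $\ell - \left(\tfrac{-4}{\ell}\right)$ (the maximal $\ell+1$ out-edges from $E$ minus the $1 + \left(\tfrac{-4}{\ell}\right)$ horizontal ones from Corollary~\ref{cor:horizon}), so averaging over the $h(\mathcal{O})$ targets forces exactly two edges from $1728$ to each level-$1$ vertex and, by a parallel count on ascending isogenies, exactly one edge back.

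The only point requiring a little care — and the analogue of the one subtlety in the $j=0$ case — is the bookkeeping with the automorphism group: one must remember that $\mathrm{Aut}_{\overline{\mathbb{F}}_p}(E)$ for $j=1728$ has order $4$, not $6$, so non-fixed orbits have size $2$ rather than $3$, and that the discriminant-$-4$ class number formula carries a factor $\tfrac12$ rather than $\tfrac13$. Everything else transfers unchanged, including the use of Corollary~\ref{cor:horizon} to pin down the crater edges and the use of Lemma~\ref{lem:annul} to identify the fixed subgroups. I therefore expect no real obstacle beyond transcribing the $j=0$ proof with these numerical substitutions.
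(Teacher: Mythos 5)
Your proposal is correct and matches the paper's intent exactly: the paper's own proof of this proposition is literally the single sentence ``The proof goes along the same lines as the proof of Proposition~\ref{prop:zero},'' and what you have written is precisely that transcription, with the correct substitutions ($\mu_4$ in place of $\mu_6$, orbit size $2$ in place of $3$, the factor $\tfrac12$ in the class number formula from \cite[Corollary 7.28]{Cox97}, and the discriminant $-4$ Kronecker symbol). All the numerical checks (one vertex at level $1$ when $\ell=2$, $(\ell-1)/2$ when $\ell\equiv 1 \bmod 4$, $(\ell+1)/2$ when $\ell\equiv 3 \bmod 4$, and two descending edges per level-$1$ vertex) come out as claimed.
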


\begin{proof}
The proof goes along the same lines as the proof of Proposition \ref{prop:zero}.
\end{proof}

\begin{figure}[h]
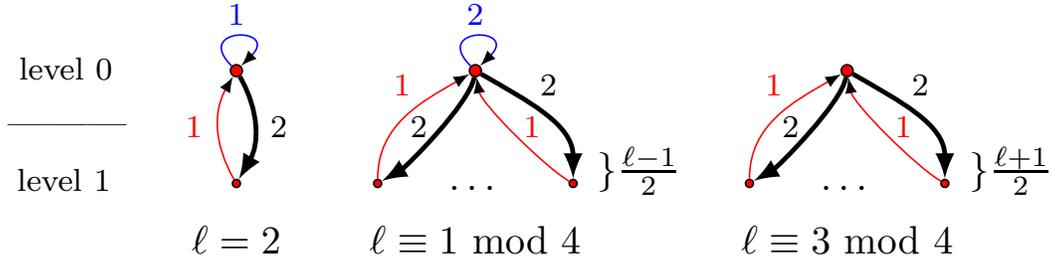

    \centering
    \scalebox{1.5}{\tikzfig{fig/exemple1728_2}}
    \scalebox{1.5}{\tikzfig{fig/example1728_1}} 
    \scalebox{1.5}{\tikzfig{fig/exemple1728_3}}
    \caption{All possible neighbourhoods of depth 1 around $1728$. Numbers next to directed edges represent multiplicities.}
    \label{fig:neighbours1728}
\end{figure}

\subsection{Mapping the territory}

In this paragraph, we count the number of volcanic structures.

\begin{lemma}
For $t\in\Z_{>0}$, the number of non-empty $t$-cordilleras in $\mathcal{G}_\ell(\mathbb{F}_p)$ is $\lfloor 2\sqrt{p} \rfloor$.
\end{lemma}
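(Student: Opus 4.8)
The plan is to count, for fixed $t \in \mathbb{Z}_{>0}$, the vertices lying in the $t$-cordillera $\mathcal{C}_t$, which by definition are the $j \in \mathbb{F}_p$ with $\mathrm{Tr}(\pi_j) = \pm t$. The cordillera is non-empty precisely when the associated integer $t$ satisfies the Hasse bound $-2\sqrt{p} \le t \le 2\sqrt{p}$, by part (1) of Lemma~\ref{lem:cordillera_empty} (Waterhouse's theorem). Hence the set of integers $t$ with $1 \le t$ for which $\mathcal{C}_t$ is non-empty is exactly $\{1, 2, \dots, \lfloor 2\sqrt{p} \rfloor\}$: the value $t = 2\sqrt{p}$ is never an integer since $p$ is prime (so $p$ is not a perfect square), and more to the point, if $t^2 = 4p$ then $p \mid t$, which is the supersingular case and excluded from $\mathcal{V}$.

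The only subtlety I anticipate is making sure that each such $t$ really does give rise to at least one ordinary $j$-invariant, i.e. that $\mathcal{V}_t \ne \varnothing$, and not merely that the norm equation $4p = t^2 - v^2 D(\mathcal{O}_K)$ is solvable in the abstract. But this is exactly the content of Lemma~\ref{lem:cordillera_empty}(1): for $t \ne 0$ in the open interval $(-2\sqrt{p}, 2\sqrt{p})$ the set $\mathcal{V}_t$ is non-empty. One should also note that distinct positive values of $t$ give disjoint vertex subsets (by definition of $\mathcal{V}_t$ and since $t \mapsto \mathrm{Tr}(\pi_j)^2$ determines $t$ up to sign), so there is no double-counting: the number of non-empty $t$-cordilleras with $t > 0$ is genuinely the number of such $t$, not a count with multiplicity.

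Putting this together: the plan is simply to observe that $\{t \in \mathbb{Z}_{>0} : \mathcal{V}_t \ne \varnothing\} = \{t \in \mathbb{Z}_{>0} : 1 \le t \le 2\sqrt{p}\} = \{1, \dots, \lfloor 2\sqrt{p}\rfloor\}$, where the first equality is Lemma~\ref{lem:cordillera_empty}(1) together with the remark that $t = 2\sqrt{p}$ cannot occur (as $p$ is prime, hence not a square, and $p \nmid t$ for ordinary curves), and the second equality is the definition of the floor function. The cardinality of this set is $\lfloor 2\sqrt{p}\rfloor$, which is the claim. There is no real obstacle here; the proof is a one-line consequence of Lemma~\ref{lem:cordillera_empty}, and the only thing worth spelling out is why the endpoint $t = \lfloor 2\sqrt p\rfloor$ is always attained (because $2\sqrt p$ is irrational, $\lfloor 2\sqrt p\rfloor < 2\sqrt p$, so it lies in the open Hasse interval).
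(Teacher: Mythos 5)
Your proof is correct and follows essentially the same route as the paper: both arguments reduce the count to the number of positive traces attainable in the Hasse interval, which is $\lfloor 2\sqrt{p}\rfloor$ by Waterhouse's theorem (Lemma~\ref{lem:cordillera_empty}(1)). Your extra remarks on the endpoint $t=2\sqrt{p}$ and on disjointness of the $\mathcal{V}_t$ are fine but not a different method.
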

\begin{proof}
The number of cordilleras is the number of possible traces up to sign. By Waterhouse \cite[Theorem 4.1 page 536]{Wat69}, each trace in the Hasse interval $[-2\sqrt{p},2\sqrt{p}]$ is attained over $\mathbb{F}_p$. This gives $\lfloor 2\sqrt{p} \rfloor$ possibilities for non-zero traces up to sign. It is independent of $\ell$.
\end{proof}

\begin{lemma}
Let $t\in{[1,2\sqrt{p}]}$ be an integer. Let $K$ be the associated field $\mathbb{Q}(\sqrt{t^2-4p})$. Let $D(\mathcal{O}_K)$ denote its discriminant. Let $v=\sqrt{(t^2-4p)/D(\mathcal{O}_K)}$.
Let $d=v_\ell(v)$. The number of belts in the $t$-cordillera is $\omega(v\ell^{-d})$, where $\omega(k)$ is the number of positive divisors of the integer $k$.
\end{lemma}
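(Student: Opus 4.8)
The plan is to establish that, as $m$ ranges over the positive divisors of $v':=v\ell^{-d}$, the belts $\mathcal{B}_{t,m}$ are pairwise distinct and together exhaust the $t$-cordillera; the number of belts is then precisely the number of positive divisors of $v'$, which is $\omega(v\ell^{-d})$ in the paper's convention.

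\textbf{Step 1: every vertex lies in exactly one belt.} First I would check that the sets $\mathcal{V}_{t,m}$ with $m\mid v'$ partition $\mathcal{V}_t$. For $j\in\mathcal{V}_t$, the order $\mathcal{O}_j:=\en_{\overline{\mathbb{F}}_p}(E_j)$ satisfies $\mathbb{Z}[\pi]\subseteq\mathcal{O}_j\subseteq\mathcal{O}_K$, so $[\mathcal{O}_K:\mathcal{O}_j]$ divides $[\mathcal{O}_K:\mathbb{Z}[\pi]]=v=\ell^d v'$. Writing $[\mathcal{O}_K:\mathcal{O}_j]=\ell^a m$ with $0\le a\le d$ and $m\mid v'$ (hence $\gcd(\ell,m)=1$), the definition of the saturation gives $[\mathcal{O}_K:\widetilde{\mathcal{O}_j}]=\ell^a m\cdot\ell^{d-a}=\ell^d m$, so $\widetilde{\mathcal{O}_j}=\mathbb{Z}+m\ell^d\mathcal{O}_K$ and therefore $j\in\mathcal{V}_{t,m}$. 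Since $m$ is intrinsic to $j$ (it is the prime-to-$\ell$ part of $[\mathcal{O}_K:\mathcal{O}_j]$), these sets are disjoint and their union is $\mathcal{V}_t$; this also reproves the disjointness asserted in the preceding lemma.

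\textbf{Step 2: every belt is non-empty.} Fix a divisor $m\mid v'$ and set $\mathcal{O}:=\mathbb{Z}+m\ell^d\mathcal{O}_K$, an order of index $m\ell^d$ in $\mathcal{O}_K$. Since $m\ell^d\mid v'\ell^d=v$, we get $v\mathcal{O}_K\subseteq m\ell^d\mathcal{O}_K$, hence $\mathbb{Z}[\pi]=\mathbb{Z}+v\mathcal{O}_K\subseteq\mathcal{O}$; in particular $\mathcal{O}$ contains the order of discriminant $t^2-4p$. The $t$-cordillera is non-empty by hypothesis ($1\le t\le 2\sqrt{p}$), so Lemma~\ref{lem:cordillera_empty}(2) produces an elliptic curve $E/\mathbb{F}_p$ with $\en_{\overline{\mathbb{F}}_p}(E)\cong\mathcal{O}$ and $j(E)\in\mathcal{V}_t$. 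As $v_\ell([\mathcal{O}_K:\mathcal{O}])=v_\ell(m\ell^d)=d$, the order $\mathcal{O}$ is $\ell$-saturated, i.e.\ $\widetilde{\mathcal{O}}=\mathcal{O}=\mathbb{Z}+m\ell^d\mathcal{O}_K$; hence $j(E)\in\mathcal{V}_{t,m}$, so $\mathcal{B}_{t,m}\neq\emptyset$.

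\textbf{Conclusion and main obstacle.} Combining the two steps, the belts $\mathcal{B}_{t,m}$ with $m\mid v'$ are non-empty, pairwise disjoint, and cover $\mathcal{V}_t$, so the $t$-cordillera contains exactly as many belts as $v'=v\ell^{-d}$ has positive divisors, namely $\omega(v\ell^{-d})$. I do not expect a genuine obstacle: the argument is pure bookkeeping with saturations together with the existence statement in Lemma~\ref{lem:cordillera_empty}(2) (which itself rests on Waterhouse's theorem). The only point requiring care is the two small divisibility checks used above — that $m\ell^d\mid v$, so that $\mathcal{O}\supseteq\mathbb{Z}[\pi]$ and Lemma~\ref{lem:cordillera_empty}(2) is applicable, and that $\mathcal{O}$ equals its own saturation because $m$ is prime to $\ell$.
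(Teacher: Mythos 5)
Your argument is correct and follows the same route as the paper: the belts are indexed by the positive divisors $m$ of $v\ell^{-d}$ via $\widetilde{\mathcal{O}_j}\cong\mathbb{Z}+m\ell^d\mathcal{O}_K$, and counting them reduces to counting those divisors. The paper's own proof is a one-line exhibition of the sets $\mathcal{V}_{t,m}$, so your two verifications (that the $\mathcal{V}_{t,m}$ partition $\mathcal{V}_t$ because $m$ is the prime-to-$\ell$ part of the index, and that each belt is non-empty via Lemma~\ref{lem:cordillera_empty}(2)) are exactly the details it leaves implicit, correctly supplied.
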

\begin{proof}
  For every positive divisor $m$ of $v\ell^{-d}$, we have the belt $B_{t,m}$ induced by the following set of vertices:
$$\mathcal{V}_{t,m}=\{j\in{\mathcal{V}_t}\,\vert\, \widetilde{\mathrm{End}(E_j)}\cong\mathbb{Z}+m\ell^d \mathcal{O}_K\}.$$
\end{proof}

\begin{lemma}
Let $\mathcal{B}_{t,m}$ be a belt associated with a trace $t\neq0$ and integer $m$. Let $K$ be the associated field. Let $\mathcal{O}=\mathbb{Z}+m \mathcal{O}_K$.
\begin{itemize}
    \item[(i)] If $\ell$ is inert, the number of volcanoes in $\mathcal{B}_{t,m}$ is $h(\mathcal{O})$;
    \item[(r)] If $\ell$ is ramified, if $\ell \mathcal{O}=\mathcal{L}^2$ with $\mathcal{L}$ principal, then the number of volcanoes in $\mathcal{B}_{t,m}$ is $h(\mathcal{O})$.
    If $\ell \mathcal{O}=\mathcal{L}^2$ with $\mathcal{L}$ not principal, then the number of volcanoes in $\mathcal{B}_{t,m}$ is $h(\mathcal{O})/2$;
    \item[(s)] If $\ell$ is split and $\ell \mathcal{O}=\mathfrak{L} \overline{\mathfrak{L}}$, let $r$ be the order of $\mathcal{L}$ in $\Cl(\mathcal{O})$, then the number of volcanoes in $\mathcal{B}_{t,m}$ is $h(\mathcal{O})/r$.
\end{itemize}
\end{lemma}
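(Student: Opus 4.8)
The plan is to compute the number of connected components of the belt $\mathcal{B}_{t,m}$ by counting the total number of vertices at level $0$ and dividing by the size of each crater, then observing that every vertex in the belt lies in a unique volcano whose crater is determined by Proposition \ref{horizontal}. First I would note that, by Corollary \ref{cor:everything_is_here} and Lemma \ref{lem:cordillera_empty}(3), the vertices of $\mathcal{B}_{t,m}$ at level $0$ are exactly the $h(\mathcal{O})$ $j$-invariants of elliptic curves over $\mathbb{F}_p$ with complex multiplication by $\mathcal{O}=\mathbb{Z}+m\mathcal{O}_K$ (after absorbing the $\ell^d$ factor into the belt normalisation; here $\mathcal{O}$ is $\ell$-dry), and these vertices are partitioned among the craters of the volcanoes in $\mathcal{B}_{t,m}$. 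Since each volcano has exactly one crater and the number of volcanoes equals the number of level-$0$ connected components, it suffices to divide $h(\mathcal{O})$ by the number of level-$0$ vertices in a single crater.

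Next I would invoke Proposition \ref{horizontal} to read off the crater size in each of the three cases. If $\ell$ is inert in $K$, then by Remark \ref{ideal:norm} there are no prime ideals of norm $\ell$ in $\mathcal{O}$, so each crater is a single vertex and the number of volcanoes is $h(\mathcal{O})$. If $\ell$ is ramified, $\ell\mathcal{O}=\mathcal{L}^2$: when $\mathcal{L}$ is principal the crater is a single vertex (case (2) of Proposition \ref{horizontal}), giving $h(\mathcal{O})$ volcanoes; when $\mathcal{L}$ is not principal the crater consists of the two vertices joined by the orbit of $[\mathcal{L}]$ (case (4)), and since $[\mathcal{L}]$ has order $2$ in $\mathrm{Cl}(\mathcal{O})$ the crater has $2$ vertices, giving $h(\mathcal{O})/2$. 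If $\ell$ splits, $\ell\mathcal{O}=\mathfrak{L}\overline{\mathfrak{L}}$, and by case (6) of Proposition \ref{horizontal} each crater is a cycle of length $r=\mathrm{ord}_{\mathrm{Cl}(\mathcal{O})}[\mathfrak{L}]$ — this uses that the class group action of Theorem \ref{thm:main_theorem_CM} is faithful and transitive, so the craters are precisely the orbits of the cyclic subgroup $\langle[\mathfrak{L}]\rangle$ acting on the $h(\mathcal{O})$ level-$0$ vertices, hence all have size $r$ and there are $h(\mathcal{O})/r$ of them.

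The only real subtlety — and the step I would be most careful about — is checking that every level-$0$ vertex of $\mathcal{B}_{t,m}$ really does lie in the crater of a volcano in $\mathcal{B}_{t,m}$ (rather than, say, only being reachable via descending isogenies into lava flows that leave the belt), so that the level-$0$ subgraph $\mathcal{C}_{t,m}$ genuinely decomposes as a disjoint union of craters. This is exactly the content of the first sentence of Proposition \ref{horizontal} (``each connected component of $\mathcal{C}_{t,m}$ is a crater of a volcano in $\mathcal{B}_{t,m}$''), together with Corollary \ref{cor:horizon} which guarantees that horizontal isogenies stay within a belt and within level $0$; combining these two facts closes the argument. I would also remark that the hypothesis $t\neq 0$ ensures $\mathbb{Z}[\pi]$ is a genuine imaginary quadratic order with $p\nmid$ conductor, so $\mathcal{O}$ is well-defined and the counting via class numbers applies verbatim, and that the division $h(\mathcal{O})/r$ (resp. $h(\mathcal{O})/2$) is automatically an integer since $r$ (resp. $2$) is the order of an element of the group $\mathrm{Cl}(\mathcal{O})$, so by Lagrange it divides $h(\mathcal{O})$.
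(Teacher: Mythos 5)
Your proof is correct and follows essentially the same route as the paper's: count the $h(\mathcal{O})$ level-$0$ vertices of the belt (via Corollary \ref{cor:everything_is_here}) and divide by the crater size read off from Proposition \ref{horizontal}, using that all craters in a belt are isomorphic orbits of $\langle[\mathfrak{L}]\rangle$ under the class group action. The extra care you take about the partition of level-$0$ vertices into craters and the integrality of $h(\mathcal{O})/r$ is sound but is exactly what the paper's (much terser) proof implicitly relies on.
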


\begin{proof}
The number of volcanoes is the number of craters, and the craters are isomorphic within a belt. The total number of vertices of depth 0 in $\mathcal{B}_{t,m}$ is $h(\mathcal{O})$. The size of each crater is given by Proposition \ref{horizontal}. 
\end{proof}

\begin{remark}
All volcanoes on the same cordillera have the same depth. All volcanoes on the same belt have the exact same shape. The total number of vertices at level $0$ in a belt with endomorphism order $\mathcal{O}$ is $h(\mathcal{O})$.
\end{remark}

\begin{lemma}\label{count}
Let $V$ be a volcano in the belt $\mathcal{B}_{t,m}$, where $t$ corresponds to a cordillera of depth $d$. Let $K$ be the associated field. Let $\mathcal{O}=\mathbb{Z}+m \mathcal{O}_K$. The number of vertices in the volcano $V$ is

$$c+\frac{2c}{\# \mathcal{O}^\times} \left(\left(\ell-\left(\frac{D(\mathcal{O})}{\ell}\right) \right)\frac{\ell^d-1}{\ell-1}\right).$$
\end{lemma}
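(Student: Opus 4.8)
The plan is to slice $V$ into its levels $V_0, \dots, V_d$, count the vertices on each level in terms of class numbers, and then collapse the resulting sum using Cox's class number formula.

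First I would introduce the tower of orders $\mathcal{O}_n := \mathbb{Z} + m\ell^{n}\mathcal{O}_K$ for $0 \le n \le d$, so that $\mathcal{O}_0 = \mathcal{O}$, and write $c$ for the number of vertices of the crater $V_0$ (as described in Proposition~\ref{horizontal}). Recalling that the conductor $v$ of $\mathbb{Z}[\pi]$ has $\ell$-part $\ell^{d}$ and prime-to-$\ell$ part $v'$, and that $m \mid v'$ (so in particular $\ell \nmid m$), one unwinds the definitions of belt and level: a vertex $j(E)$ of $\mathcal{B}_{t,m}$ lies at level $n$ exactly when $\mathrm{End}_{\overline{\mathbb{F}}_p}(E) \cong \mathcal{O}_n$; every $\mathcal{O}_n$ with $0 \le n \le d$ contains $\mathbb{Z}[\pi]$; and the levels $n > d$ are empty. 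Hence, by Lemma~\ref{lem:cordillera_empty}(2)--(3) together with Corollary~\ref{cor:everything_is_here}, each level $n \le d$ of $\mathcal{B}_{t,m}$ carries exactly $h(\mathcal{O}_n)$ vertices, all of them lying in $\mathcal{V}_t \subseteq \mathbb{F}_p$.

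Next I would descend from the belt to the single volcano $V$. By the preceding lemma, $\mathcal{B}_{t,m}$ contains exactly $h(\mathcal{O})/c$ volcanoes, and since all volcanoes in a belt are isomorphic, the $h(\mathcal{O}_n)$ level-$n$ vertices of $\mathcal{B}_{t,m}$ are distributed evenly among them, so $V$ has $c\,h(\mathcal{O}_n)/h(\mathcal{O})$ vertices at level $n$, and $\#V = c\sum_{n=0}^{d} h(\mathcal{O}_n)/h(\mathcal{O})$. It remains to evaluate the ratios $h(\mathcal{O}_n)/h(\mathcal{O})$. Since $\ell \nmid m$ we have $\left(\frac{D(\mathcal{O}_K)}{\ell}\right) = \left(\frac{D(\mathcal{O})}{\ell}\right)$ (the same remark used in the proof of Proposition~\ref{descending}), and for $n \ge 1$ the conductor $m\ell^{n}$ of $\mathcal{O}_n$ is $> 1$, hence $\mathcal{O}_n^\times = \{\pm 1\}$. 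Feeding $\mathcal{O}$ and $\mathcal{O}_n$ into \cite[Corollary~7.28]{Cox97} and dividing, the factor $h(\mathcal{O}_K)$ and all Euler factors at primes dividing $m$ cancel, the only surviving Euler factor is the one at $\ell$, and the unit indices combine into $\#\mathcal{O}_n^\times/\#\mathcal{O}^\times = 2/\#\mathcal{O}^\times$, giving
\[
\frac{h(\mathcal{O}_n)}{h(\mathcal{O})} = \frac{2}{\#\mathcal{O}^\times}\,\ell^{n}\left(1 - \left(\frac{D(\mathcal{O})}{\ell}\right)\frac{1}{\ell}\right) = \frac{2}{\#\mathcal{O}^\times}\,\ell^{n-1}\left(\ell - \left(\frac{D(\mathcal{O})}{\ell}\right)\right)
\]
for $n \ge 1$, while the $n = 0$ term is $1$. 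Substituting this in and summing the geometric series $\sum_{n=1}^{d}\ell^{n-1} = \frac{\ell^{d}-1}{\ell-1}$ yields the claimed formula (and it correctly collapses to $\#V = c$ when $d = 0$).

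The one genuinely delicate step is the unit-group bookkeeping in the quotient of Cox's formula: this is exactly what replaces the naive factor $2$ by $\#\mathcal{O}^\times$, and it is also what makes the statement hold uniformly, including in the pathological cases $D(\mathcal{O}) \in \{-3,-4\}$ where $\mathcal{O} = \mathcal{O}_K$ has four or six units --- so that one never has to appeal to Proposition~\ref{descending}, which excludes those discriminants. One should also note in passing that the inputs $h(\mathcal{O}_n)$ and $h(\mathcal{O})/c$ stay correct there, which is immediate since a belt whose level-$0$ order is $\mathbb{Z}[\zeta_3]$ or $\mathbb{Z}[i]$ has class number $1$ and hence consists of a single volcano.
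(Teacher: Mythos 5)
Your proof is correct, but it takes a genuinely different route from the one the paper sketches. The paper's proof is local: it walks down a single volcano using the out-degree data of Proposition~\ref{descending} (each crater vertex has $\ell-\left(\frac{D(\mathcal{O})}{\ell}\right)$ descending edges, each deeper non-floor vertex has $\ell$), and then handles the discriminants $-3$ and $-4$ separately via Propositions~\ref{prop:zero} and~\ref{prop:1728}, which is where the factor $\frac{2}{\#\mathcal{O}^\times}$ comes from in those cases. You instead count globally: you tally all level-$n$ vertices of the whole belt as $h(\mathbb{Z}+m\ell^n\mathcal{O}_K)$ via Corollary~\ref{cor:everything_is_here}, divide by the number of volcanoes $h(\mathcal{O})/c$, and collapse the ratios with a single application of \cite[Corollary~7.28]{Cox97}. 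What your approach buys is uniformity --- the unit index in Cox's formula produces the factor $\frac{2}{\#\mathcal{O}^\times}$ automatically, so the pathological cases $D(\mathcal{O})\in\{-3,-4\}$ need no separate treatment (you rightly observe that the auxiliary count $h(\mathcal{O})/c$ of volcanoes per belt is trivially correct there since $h(\mathcal{O}_K)=1$). What it costs is one extra structural input: the even distribution of the level-$n$ vertices among the volcanoes of the belt, i.e.\ the fact that all volcanoes in a belt are isomorphic, which the paper asserts in a remark but which ultimately rests on the same transitive class-group action (or on Proposition~\ref{descending} itself); it would be worth a sentence making clear which of these you are invoking. Both computations agree with the worked examples in Appendix~\ref{app:1009}.
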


\begin{proof}
Use Proposition \ref{descending} in the regular case and Propositions \ref{prop:zero} and \ref{prop:1728} in the non-regular case. A formula for $c$ is given in Proposition \ref{horizontal}.
\end{proof}

\begin{remark}
There are exactly $p$ $j$-invariants in $\F_p$. Therefore, by taking into account supersingular curves, we can partition $p$ and obtain a mass formula. A full study in the case $p=1009$ and $\ell=3$ is enclosed in the appendix \ref{app:1009}.
\end{remark}

\section{The Inverse Volcano Problem}
\label{sec:inv}

Inspired by the structure of the connected components of $\mathcal{G}_\ell(\mathbb{F}_p)$, we give the following definition.

\begin{definition}\label{volcano:graph}
An \emph{abstract volcano} $V=(\mathcal{V}, \mathcal{E})$ of \emph{depth} $d\ge0$ is a connected undirected graph together with a distinguished subset $\mathcal{V}_0 \subseteq \mathcal{V}$ such that the subgraph $V_0$ induced by $\mathcal{V}_0$ is one of the graphs described in Proposition \ref{horizontal}. Moreover, $d>0$ if and only if $\mathcal{V} \setminus \mathcal{V}_0 \neq \emptyset$, in which case there exists a partition
\[
\mathcal{V} \setminus \mathcal{V}_0 = \bigcup_{i=1}^d \mathcal{V}_i
\]
and a prime number $\ell \in \mathbb{Z}$ such that, denoting by $V_i$ the subgraph induced by $\mathcal{V}_i$, the following holds:
\begin{enumerate}
    \item All vertices in $\mathcal{V}_0 \cup ... \cup \mathcal{V}_{d-1}$ have degree $\ell+1$ and all vertices in $\mathcal{V}_d$ have degree $1$;
    \item If $v \in \mathcal{V}_r$ and $v' \in \mathcal{V}_k$ are connected by an edge, then $|r-k| \leq 1$;
    \item For all $0<r\leq d$, the graph $V_r$ is totally disconnected;
    \item For $0<r \leq d$, each vertex in $V_r$ has exactly one edge to a vertex in $V_{r-1}$;
    \end{enumerate}
We call $V_0$ the \textit{crater} of $V$ and we say that the vertices in $\mathcal{V}_r$ lie at \textit{level} $r$.
\end{definition}
If the depth $d$ of an abstract volcano $\mathcal{V}$ is strictly positive, then the prime $\ell$ appearing in Definition \ref{volcano:graph} is uniquely determined by condition (1) above. In this case, we will also speak of $\mathcal{V}$ as of an (abstract) $\ell$-volcano.

\begin{figure}[h]
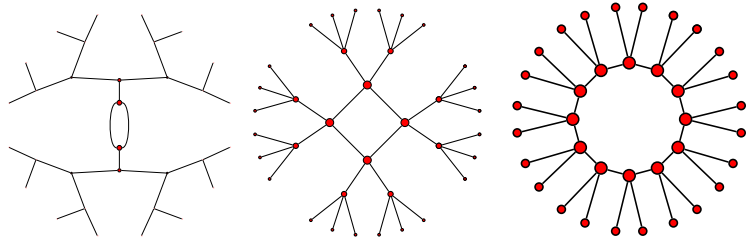

    \centering
    \scalebox{0.6}{\tikzfig{volcanoes/l=2/1009/30}}
    \scalebox{1}{\tikzfig{volcanoes/l=3/3121/783}} 
    \scalebox{1.5}{\tikzfig{volcanoes/l=3/3121/1}}
    \caption{Three abstract volcanoes.}
    \label{fig:volcanoes}
\end{figure}

It follows from the discussion in Section \ref{sec:volc} that a connected component of $\mathcal{G}_\ell(\mathbb{F}_p)$ not containing $0$ or $1728$ is an abstract volcano in the sense above. One may wonder whether the converse is also true. More precisely, we pose the following question.

\begin{question}[Inverse Volcano Problem over $\mathbb{F}_p$] \label{inverse volcano problem}
Let $V$ be an abstract volcano as in Definition~\ref{volcano:graph}. Can we find primes $p, \ell\in \mathbb{Z}$ with $p\neq \ell$ such that $V$ is a connected component in the isogeny graph $\G_{\ell}(\F_p)$?
\end{question}  

Roughly speaking, one can study this question by dividing it into two subcases:
\begin{enumerate}
    \item The volcano $V$ has depth $d=0$ i.e. $V=V_0$;
    \item The volcano $V$ has depth $d>0$ i.e. $V \neq V_0$.
\end{enumerate}
These two cases are fundamentally different in nature. Indeed, in the first case we can try to find (and we will find) the volcano $V$ as connected component of $\mathcal{G}_{\ell}(\mathbb{F}_p)$ where both $\ell$ and $p$ are allowed to vary. However, in the second case the prime $\ell$ is fixed, since it must be one less than the degree of any vertex in $V_0$. Hence, in this second setting we have less freedom of choice and the inverse volcano problem becomes more difficult.

To study Question \ref{inverse volcano problem}, it is useful to introduce a couple of definitions. First of all note that, letting $V_0$ be one of the graphs described in Proposition \ref{horizontal}, $\ell$ a prime number and $d\in \mathbb{Z}_{>0}$, there exists a unique abstract $\ell$-volcano $V=V(V_0,\ell, d)$ with crater $V_0$ and depth $d$. We call $V$ the \textit{volcano induced by the triple $(V_0,\ell,d)$}. We also give the following definition of abstract crater associated with a prime ideal.

\begin{definition} \label{def:induced_crater}
Let $\mathcal{O}$ be an imaginary quadratic order and $\ell \in \mathbb{Z}_{>0}$ a prime number not dividing the conductor of $\mathcal{O}$. Choose a prime ideal $\mathfrak{L} \subseteq \mathcal{O}$ lying above $\ell$. The \textit{abstract crater $V_0$ associated with $\mathfrak{L}$} is the graph consisting of: 
\begin{itemize}
 \item A single vertex if $\ell$ is inert in $\mathcal{O}$;
    \item A single vertex with a self-loop if $\ell$ is ramified in $\mathcal{O}$ and $\mathfrak{L}$ is principal;
    \item A single vertex with two self-loops if $\ell$ splits in $\mathcal{O}$ and $\mathfrak{L}$ is principal;
    \item Two vertices connected with a single edge if $\ell$ ramifies in $\mathcal{O}$ and $\mathfrak{L}$ is not principal;
    \item A cycle of size the order of $[\mathfrak{L}]$ in the class group $\Cl(\Or)$ if $\ell$ splits in $\mathcal{O}$ and we are not in any of the previous cases.
\end{itemize}
\end{definition}
The relationship between Definition~\ref{def:induced_crater}, Proposition~\ref{horizontal} and Question~\ref{inverse volcano problem} is clear: if we want to realise an $\ell$-volcano $V$ as the connected component of $\mathcal{G}_\ell(\mathbb{F}_p)$ for some prime $p$, it seems natural to begin by realising its crater $V_0$. By Proposition \ref{horizontal}, a good start would be to find an imaginary quadratic field $K$ whose ring of integers contains an ideal $\mathfrak{L} \mid \ell$ such that $V_0$ is equal to the abstract crater associated with $\mathfrak{L}$ (here we are evidently hoping to view the vertices of $V_0$ as $j$-invariants of CM elliptic curves with complex multiplication by the \textit{maximal order} in $K$).

In fact, we will now prove that, in order to solve the inverse volcano problem, realising $V_0$ as a crater is the key step.

\begin{proposition} \label{prop:abstract_crater_realizable}
Let $\mathcal{O}$ be an order of discriminant $D(\mathcal{O})<-4$ in an imaginary quadratic field $K$ and let $\ell \in \mathbb{Z}_{>0}$ be a prime not dividing the conductor of $\mathcal{O}$. Let $\mathfrak{L} \subseteq \mathcal{O}$ be a prime lying above $\ell$ and let $V_0$ be the abstract crater associated with $\mathfrak{L}$. Then for every $d>0$ there exist infinitely many primes $p=p(d) \in \mathbb{Z}_{>0}$ such that $\mathcal{G}_\ell(\mathbb{F}_p)$ contains the volcano induced by $(V_0, \ell, d)$ as connected component. Moreover, if $\ell\neq 2$ the same is true with $d=0$.
\end{proposition}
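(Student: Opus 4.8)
The plan is to realise the abstract crater $V_0$ — already available to us as the crater attached to the prime $\mathfrak{L}$ of $\mathcal{O}$ — inside an isogeny graph, and then to control the depth by choosing $p$ so that the Frobenius order sits exactly $d$ levels below the appropriate $\ell$-saturated order. First I would use the theory of complex multiplication over $\overline{\mathbb{Q}}$: the order $\mathcal{O}'=\mathbb{Z}+\ell^d\mathcal{O}$ of conductor $\ell^d\cdot(\text{cond }\mathcal{O})$ has a Hilbert class polynomial $H_{D(\mathcal{O}')}(X)$, and I want a prime $p$ that splits completely in the ring class field $H_{\mathcal{O}'}$ (equivalently, such that $H_{D(\mathcal{O}')}$ splits completely mod $p$, $p$ splits in $K$, and the chosen prime $\mathfrak{L}$ of $\mathcal{O}$ lying above $\ell$ has the order prescribed by $V_0$ in $\mathrm{Cl}(\mathcal{O})$ — but the latter is automatic since $V_0$ is by hypothesis the abstract crater of $\mathfrak{L}$). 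Such $p$ exist in infinitude by Chebotarev. For such $p$, the prime $\ell$ is split or ramified in $K$ according to $V_0$, $\ell$ is coprime to $p$ (shrink the set of admissible $p$ to ensure this, and $p>\ell$), and Corollary~\ref{cor:everything_is_here} together with Deuring reduction gives $h(\mathcal{O})$ distinct $j$-invariants in $\mathbb{F}_p$ with CM by $\mathcal{O}$; by Proposition~\ref{horizontal} these sit on a crater isomorphic to $V_0$.

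The second step is to force the volcano above this crater to have depth exactly $d$ and the regular shape described in Proposition~\ref{descending}. Here the key is to arrange that $\mathbb{Z}[\pi_p]$, the Frobenius order, has $\ell$-conductor exactly $\ell^d\cdot v_\ell(\text{cond }\mathcal{O})$ — in the cleanest case $\mathcal{O}=\mathcal{O}_K$ so we just need $v_\ell(\text{cond }\mathbb{Z}[\pi_p])=d$. Writing $4p=t^2-u^2 D(\mathcal{O}_K)$, I need $v_\ell(u)=d$ exactly. This is a congruence condition on $p$: asking $u\equiv 0\pmod{\ell^d}$ but $u\not\equiv 0\pmod{\ell^{d+1}}$ translates, via the theory of the split/ramified behaviour of $\ell$, into asking that $p$ lies in a prescribed nonempty union of residue classes modulo $\ell^{d+1}$ (or $\ell^{d+2}$ when $\ell=2$) — concretely, $p$ should be a norm from $\mathcal{O}'=\mathbb{Z}+\ell^d\mathcal{O}_K$ but not from $\mathbb{Z}+\ell^{d+1}\mathcal{O}_K$, which is exactly the statement that $p$ splits completely in $H_{\mathcal{O}'}$ but not in $H_{\mathbb{Z}+\ell^{d+1}\mathcal{O}_K}$. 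Since $H_{\mathcal{O}'}\subsetneq H_{\mathbb{Z}+\ell^{d+1}\mathcal{O}_K}$ strictly, Chebotarev again supplies infinitely many such $p$. For these $p$, Proposition~\ref{descending} (the hypothesis $D(\mathcal{O})<-4$ guarantees we avoid the pathological orders $-3,-4$, and one checks $0,1728\notin V$) shows the volcano on our crater is precisely $V(V_0,\ell,d)$, and it is a full connected component because no horizontal or vertical isogeny can leave it.

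For the final sentence — the case $d=0$ when $\ell\neq 2$ — I would simply drop the second step: take $p$ splitting completely in $H_{\mathcal{O}}$ but not in $H_{\mathbb{Z}+\ell\mathcal{O}_K}$, so that $v_\ell(\text{cond }\mathbb{Z}[\pi_p])=0$ and the whole belt lives at level $0$; then the connected component through our $j$-invariants is the crater $V_0$ itself, with no lava flow. The restriction $\ell\neq 2$ enters because for $\ell=2$ and $D(\mathcal{O})$ odd one can have $2\mid[\mathcal{O}_K:\mathbb{Z}[\pi_p]]$ forced by $4p=t^2-u^2D$, so a depth-$0$ $2$-volcano on the maximal order may be unavoidably enlarged — this subtlety is exactly what later forces the detour through $K=\mathbb{Q}(\sqrt{1-2^{n+2}})$ in Theorem~\ref{thm:prime_given_order_intro}(1) rather than a naive choice. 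The main obstacle in the whole argument is the simultaneous control: one must choose a single $p$ that both makes the crater rational with the right class-group structure (a splitting condition in $H_{\mathcal{O}}$) and pins the Frobenius conductor to the exact $\ell$-power (a splitting-but-not-further condition in a larger ring class field), and verifying that these conditions are compatible and cut out infinitely many primes is where the Chebotarev bookkeeping — and the careful treatment of $\ell=2$ — has to be done honestly.
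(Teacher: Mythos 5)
Your argument is correct, and it is in fact precisely the ``more sophisticated'' alternative that the authors themselves record in the remark immediately following their proof: you ask that $p$ split completely in the ring class field of $\mathbb{Z}+\ell^{d}\mathcal{O}$ but not in that of $\mathbb{Z}+\ell^{d+1}\mathcal{O}$, and then read off the crater from Proposition~\ref{horizontal} and the depth from Corollary~\ref{cor:everything_is_here} and Proposition~\ref{descending}. The paper's own proof is more down-to-earth: it runs Chebotar\"ev on the tower of ring class fields of the orders $\mathbb{Z}[\ell^{k}\sqrt{D(\mathcal{O})}]$, uses \cite[Theorem 9.4]{Cox97} to write $p=x^{2}-D(\mathcal{O})\ell^{2d}y^{2}$ with $\ell\nmid y$, and sets $t=2x$, $v=2\ell^{d}y$ to produce a solution of $4p=t^{2}-v^{2}D(\mathcal{O})$ with $v_{\ell}(v)=d$; the extra factor $2$ is harmless for $\ell$ odd and is exploited for $\ell=2$ by shifting the tower down one step, which is exactly where the hypothesis $d>0$ enters and matches your diagnosis of the $\ell=2$, $d=0$ obstruction. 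What the paper's route buys is computational: $4p=t^{2}-v^{2}D(\mathcal{O})$ can be solvable when $p=x^{2}-D(\mathcal{O})y^{2}$ is not, so it tends to yield smaller admissible primes; your route is cleaner conceptually and avoids the norm-equation bookkeeping. Three small points to tighten: the relevant orders are $\mathbb{Z}+\ell^{k}\mathcal{O}$ throughout (you occasionally write $\mathbb{Z}+\ell^{k}\mathcal{O}_{K}$, which is wrong when $\mathcal{O}$ is non-maximal); the condition on $p$ is a Chebotar\"ev condition in a generally non-abelian ring class field, not literally a union of residue classes modulo $\ell^{d+1}$, though your subsequent reformulation is the correct one; and the strict inclusion of ring class fields you invoke should be justified by the class number formula \cite[Corollary 7.28]{Cox97} --- it is precisely this computation that fails for $d=0$, $\ell=2$ split, since then $h(\mathbb{Z}+2\mathcal{O})=h(\mathcal{O})$ and the two fields coincide.
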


\begin{proof}
Given a positive integer $d$, our goal is to find infinitely many primes $p \in \mathbb{Z}_{>0}$ and pairs $(t,v) \in \mathbb{Z}^2$ such that $t\neq 0$, the prime power $\ell^d$ divides exactly $v$ and $4p=t^2-v^2 D(\mathcal{O})$. Indeed, suppose this is the case. Then of course we must have $-2\sqrt{p} \leq t \leq 2\sqrt{p}$ so by Lemma \ref{lem:cordillera_empty} the $t$-cordillera in $\mathcal{G}_\ell(\mathbb{F}_p)$ is non-empty. Since the order $\mathcal{O}$ certainly contains the order of discriminant $v^2 D(\mathcal{O})$ and $\ell$ does not divide the conductor of $\mathcal{O}$, by Proposition \ref{descending} combined again with Lemma \ref{lem:cordillera_empty} the $t$-cordillera possesses a connected component isomorphic to the volcano induced by $(V_0, \ell, d)$.

To find primes $p$ and corresponding couples $(t,v)$ as above, we proceed as follows. For every $k \in \mathbb{N}$, denote by $H_k$ the ring class field of the order $\mathbb{Z}[\ell^k \sqrt{D(\mathcal{O})}] \subseteq \mathcal{O}$. We have that $H_k \subsetneq H_{k+1}$ for all $k \in \mathbb{N}$. To see this, one can simply use \cite[Corollary 7.28]{Cox97} to compute $[H_{k+1}:H_k]$: if $\ell \neq 2$ one has $[H_{k+1}:H_k] \in \{\ell-1, \ell, \ell+1 \}$ for all $k\geq 0$, while in the case $\ell=2$ one has $[H_{k+1}:H_k]=2$ for all $k\geq 0$ since $\mathbb{Z}[2^k \sqrt{D(\mathcal{O})}]$ has always even discriminant. 

In particular $H_d \subsetneq H_{d+1}$, so by the Chebotar\"ev density theorem there are infinitely many primes $p\nmid 2\ell D(\mathcal{O})$ that split completely in $H_d$ but do not split completely in $H_{d+1}$. Using \cite[Theorem 9.4]{Cox97}, we see that there exist $x,y \in \mathbb{Z}$ such that $p=x^2-D(\mathcal{O})\ell^{2d}y^2$. Moreover, we also have $\ell \nmid y$, since otherwise there would exist $\tilde{y} \in \mathbb{Z}$ such that $p=x^2-D(\mathcal{O})\ell^{2d+2}\tilde{y}^2$ and then $p$ would split completely (again by \cite[Theorem 9.4]{Cox97}) in $H_{d+1}$, contradicting our assumptions. Now such a $p$ satisfies the norm equation
\[
4p=t^2-v^2D(\mathcal{O})
\]
where $t=2x$ and $v=2\ell^d y$. We certainly have $t\neq 0$ since $p$ is split in $K$ by our choices. Moreover, if $\ell \neq 2$, the power $\ell^d$ divides exactly $v$ so in this case the theorem is proved.

If $\ell=2$ then the same arguments work by considering primes splitting completely in $H_{d-1}$ but not in $H_d$ (here we use the fact that $d>0$). This concludes the proof.
\end{proof}

\begin{remark}
In the above proof we have chosen a down-to-earth approach, proving the existence of the volcano induced by $(V_0,\ell,d)$ in $\mathcal{G}_\ell(\mathbb{F}_p)$ by solving an equation of the form $4p=t^2-v^2 \ell^{2d} D(\mathcal{O})$ with $t,v \in \mathbb{Z}$. We could have been more sophisticated and argued as follows: if one manages to find a prime that splits completely in the ring class field of $\mathbb{Z}+\ell^d \mathcal{O}$ but not in the ring class field of $\mathbb{Z} + \ell^{d+1} \mathcal{O}$, then the residue field at $p$ will certainly contain all $j$-invariants of elliptic curves with CM by $\mathbb{Z}+\ell^d \mathcal{O}$ but no $j$-invariant of elliptic curves with CM by $\mathbb{Z} + \ell^{d+1} \mathcal{O}$ (cfr. Corollary \ref{cor:everything_is_here}) and this would ensure the existence of the desired volcano. By the Chebotar\"ev density theorem, this can be achieved if and only if the two aforementioned ring class fields are distinct, which certainly happens if $\ell \neq 2$ or $\ell=2$ and $d>0$.

Ultimately, the difference in the two proofs lies in the following fact: in the first proof we have solved the equation
\begin{equation} \label{eq:4p}
    4p=t^2-v^2 \ell^{2d} D(\mathcal{O})
\end{equation}
by first solving the auxiliary equation 
\begin{equation} \label{eq:p}
    p=x^2-y^2 \ell^{2d} D(\mathcal{O})
\end{equation}
and then multiplying by $2$ the found $x,y$. However, \eqref{eq:4p} may have a solution even if \eqref{eq:p} does not. For instance, the prime $3$ is certainly not of the form $x^2+11 y^2$, but we have $4\cdot 3=1^2+11\cdot 1^2$. The proof appearing in this remark directly solves equation \eqref{eq:4p} without expressing $p$ itself in the form $t^2-v^2 \ell^{2d} D(\mathcal{O})$. This may be useful in view of explicit computations, since the smallest prime $p$ solving \eqref{eq:4p} is smaller or equal than the smallest prime solving \eqref{eq:p}.
\end{remark}

\subsection*{Depth \texorpdfstring{$d=0$}{d=0}}
Let us now analyze more closely the first instance of the inverse volcano problem, that is, the case when our given volcano $V$ coincides with its crater $V_0$. We will now answer Question \ref{inverse volcano problem} in this case.

\begin{proof}[Proof of Theorem \ref{thm:abstract_crater_depth0_intro}]
We refer to the possible shapes of $V=V_0$ as described in Proposition \ref{horizontal}. For each of the cases appearing in the proposition, we first want to find an imaginary quadratic field $K$ and a prime $\mathfrak{L} \subseteq \mathcal{O}_K$ such that $V$ is the abstract crater associated with $\mathfrak{L}$. In cases $(1)-(5)$ it is easy to find such a field $K$ of discriminant $D(\mathcal{O}_K)<-4$ and a prime ideal $\mathfrak{L}$ with odd residue characteristic.

To deal with the case where $V$ is a cycle of length $n\geq 3$ we appeal to~\cite[Theorem 2]{Yamamoto}, which ensures the existence of an imaginary quadratic field $K$ of discriminant $<-4$ whose class group contains an element of order $n$. Since by~\cite[Theorem 9.12]{Cox97} every ideal class contains infinitely many prime ideals, we deduce that there exists a prime $\ell>2$ that splits in $\mathcal{O}_K$ into two prime ideals of order $n$ in the class group.

Applying Proposition~\ref{prop:abstract_crater_realizable} now allows us to conclude.
\end{proof}

\subsection*{Depth \texorpdfstring{$d>0$}{d>0}} 
We now turn to the second, more difficult instance of the inverse volcano problem \textit{i.e.} the case when $V$ has depth $d>0$.
Given an $\ell$-volcano $V$ of depth $d>0$, realising its crater $V_0$ as an abstract crater amounts to finding an imaginary quadratic order of conductor coprime to $\ell$ where there exists a prime ideal $\mathfrak{L} \supseteq \ell$ satisfying the condition in Proposition \ref{horizontal} corresponding to $V_0$. In order to apply Proposition \ref{prop:abstract_crater_realizable} we may also want the discriminant of the order to be smaller than $-4$. So let us fix $\ell$ prime and see if we manage to find, for each of the six conditions expressed in Proposition \ref{horizontal}, an imaginary quadratic order that realises that condition:
\begin{enumerate}
    \item By Dirichlet's theorem on primes in arithmetic progression, there are infinitely many imaginary quadratic fields where $\ell$ is inert;
    \item If $\ell \neq 3$, then $\ell$ ramifies in a principal ideal in the ring of integers of $\mathbb{Q}(\sqrt{-\ell})$ and the latter has discriminant $<-4$. If $\ell=3$ the same is true if we consider the order $\mathbb{Z}[\sqrt{-3}]$ of conductor $2$ in $\mathbb{Q}(\sqrt{-3})$;
    \item In the imaginary quadratic field $K=\mathbb{Q}(\sqrt{1-4\ell})$ the integral element $\alpha=\frac{1+\sqrt{1-4\ell}}{2}$ has norm $\ell$. We deduce that $\ell$ splits in $\mathcal{O}_K$ into the two principal prime ideals generated by $\alpha$ and $\overline{\alpha}$;
    \item If $q$ is a prime that is sufficiently large with respect to $\ell$ then in the ring of integers of $K=\mathbb{Q}(\sqrt{-\ell q})$ the prime $\ell$ is ramified into a non-principal ideal. This follows from the fact that every element $\alpha \in \mathcal{O}_K \setminus \mathbb{Z}$ has norm $N_{K/\mathbb{Q}}(\alpha) \geq (1+\ell q)/4$.
\end{enumerate}

Conditions (5) and (6) in Proposition \ref{horizontal} are more obscure, as they require to construct an imaginary quadratic field $K$ where $\ell$ splits into two prime ideals whose class in the ideal class group of $K$ has prescribed order $n$. We now prove that it is always possible to find such a field. Our construction is inspired by the techniques used by Nagell in \cite{Nagell_1955} and very much depends on whether $\ell=2$ or $\ell>2$. We begin by treating the first case.

\begin{proposition}
\label{prop:prime_given_order_2}
Let $n \neq 4$ be a positive integer and let $K=\mathbb{Q}(\sqrt{1-2^{n+2}})$. Then in $\mathcal{O}_K$ the prime $2$ splits into two prime ideals whose corresponding classes in $\mathrm{Cl}(\mathcal{O}_K)$ have order $n$.
\end{proposition}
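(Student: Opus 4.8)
The plan is to show that the element $\alpha = \tfrac{1+\sqrt{1-2^{n+2}}}{2}\in\mathcal{O}_K$ has norm $N(\alpha) = \tfrac{1-(1-2^{n+2})}{4} = 2^n$, so that the principal ideal $(\alpha)$ factors as a product of primes above $2$; since $1-2^{n+2}\equiv 1 \pmod 8$, the prime $2$ splits in $K$ as $\mathfrak{L}\overline{\mathfrak{L}}$ with $\mathfrak{L}\neq\overline{\mathfrak{L}}$, and comparing valuations forces $(\alpha) = \mathfrak{L}^n$ (up to swapping $\mathfrak{L}$ and $\overline{\mathfrak{L}}$, as $\alpha/\overline\alpha$ is not a unit for $n\ge 1$). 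This immediately gives that the order of $[\mathfrak{L}]$ in $\mathrm{Cl}(\mathcal{O}_K)$ \emph{divides} $n$. The whole content of the proposition is therefore the reverse inequality: $[\mathfrak{L}]^k$ is nontrivial for every proper divisor $k\mid n$, equivalently for every $k<n$ with $k\mid n$.

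First I would set up the descent. Suppose $[\mathfrak{L}]^k = 1$ for some $1\le k<n$; then $\mathfrak{L}^k = (\beta)$ for some $\beta\in\mathcal{O}_K$ with $N(\beta) = 2^k$. Writing $\beta = \tfrac{x+y\sqrt{D}}{2}$ with $D = 1-2^{n+2}$ and $x\equiv y\pmod 2$, the norm condition becomes the Diophantine equation $x^2 - D y^2 = 2^{k+2}$, i.e.
\[
x^2 + (2^{n+2}-1)y^2 = 2^{k+2}.
\]
If $y\neq 0$ the left side is at least $2^{n+2}-1 > 2^{k+2}$ since $k<n$, a contradiction; so $y = 0$, forcing $x^2 = 2^{k+2}$, hence $k$ even, $x = \pm 2^{(k+2)/2}$, and $\beta\in\mathbb{Z}$. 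But then $(\beta) = (2^{(k/2+1)})$ is not a power of $\mathfrak{L}$ alone (it is $\mathfrak{L}^{k/2+1}\overline{\mathfrak{L}}^{\,k/2+1}$), contradicting $\mathfrak{L}^k = (\beta)$ unless $k = 0$. This kills every $k$ with $1\le k<n$, so the order of $[\mathfrak{L}]$ is exactly $n$.

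The main subtlety — and where the hypothesis $n\neq 4$ enters — is the very last step: one must be careful that $\mathfrak{L}^k$ being principal really does yield an element $\beta$ with $y=0$ forced, and to double-check the low-valuation bookkeeping when $D$ itself could fail to be squarefree (so that $\mathbb{Q}(\sqrt{D})$'s ring of integers is smaller than one might naively write). Concretely, $1-2^{n+2}$ need not be squarefree — e.g. for $n=4$, $1-2^6 = -63 = -9\cdot 7$, and $\mathbb{Q}(\sqrt{-63}) = \mathbb{Q}(\sqrt{-7})$, in which $2$ splits but $[\mathfrak{L}]$ has order $\ne 4$ — which is exactly the excluded case, handled separately in Theorem~\ref{thm:prime_given_order_intro}(2) via $\mathbb{Q}(\sqrt{-39})$. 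So I would first record the (easy) fact that $1-2^{n+2}$ is squarefree for $n\neq 4$ — reducing to a check that no odd prime square and no factor of $4$ divides it, the prime $3$ being the only real danger since $1-2^{n+2}\equiv 1-(-1)^{n}\cdot 4\equiv 0,-3\pmod 9$ depending on $n\bmod 2$ and more refined residues — and only then run the norm-equation descent above. The bound argument $2^{n+2}-1 > 2^{k+2}$ for $k<n$ is the engine; everything else is bookkeeping about ramification of $2$ and squarefreeness.
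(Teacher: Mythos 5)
There is a genuine gap, and it sits exactly where you flagged the ``main subtlety'': your claim that $1-2^{n+2}$ is squarefree for every $n\neq 4$ is false. Since $\mathrm{ord}_9(2)=6$, we have $9\mid 2^{n+2}-1$ whenever $n\equiv 4 \pmod 6$, so already $n=10$ gives $1-2^{12}=-4095=-3^2\cdot 455$; other primes intervene too, e.g.\ $n=19$ gives $1-2^{21}=-7^2\cdot 127\cdot 337$ because $\mathrm{ord}_{49}(2)=21$. For all such $n$ the proposition is still asserted (only $n=4$ is excluded), so your proof must handle them, and it does not: once $D=1-2^{n+2}=-Ax^2$ with $A$ squarefree and $x>1$, elements of $\mathcal{O}_K$ are $\frac{u+v\sqrt{-A}}{2}$, the norm equation for a generator of $\mathfrak{L}^k$ becomes $u^2+Av^2=2^{k+2}$, and the engine of your descent --- ``if $v\neq 0$ the left side is at least $2^{n+2}-1>2^{k+2}$'' --- only yields the weaker bound $u^2+Av^2\geq A=(2^{n+2}-1)/x^2$. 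To force $v=0$ for every proper divisor $k\leq n/2$ you would need $A>2^{n/2+2}$, i.e.\ a bound of roughly $2^{n/4}$ on the square part of $2^{n+2}-1$; no such unconditional bound is available (it runs into Wieferich-prime/abc-type territory). This is precisely why the paper's proof, after reducing to a putative $q$-th root $\left(\frac{u+v\sqrt{-A}}{2}\right)^q=\frac{1+x\sqrt{-A}}{2}$, works structurally with the binomial expansion and invokes Mahler's theorem \cite[Theorem 16]{Nagell_1955} on the Pell-type equation $U^2-DV^2=-A$ rather than a size estimate.

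It is worth saying that in the special case where $1-2^{n+2}$ \emph{is} squarefree your argument is correct and genuinely simpler than the paper's: $y\neq 0$ really does force $x^2+(2^{n+2}-1)y^2\geq 2^{n+2}-1>2^{k+2}$ for $k<n$, and the endgame ($y=0$ gives $\beta=\pm 2^{k/2}$, whose ideal is $\mathfrak{L}^{k/2}\overline{\mathfrak{L}}^{k/2}\neq\mathfrak{L}^{k}$) is clean. So the proposal could be salvaged as a short proof for those $n$ with $2^{n+2}-1$ squarefree, but for the remaining infinitely many $n$ a different argument --- essentially the one in the paper --- is required.
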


\begin{proof}
Some preliminary remarks: write $\sqrt{1-2^{n+2}}= x \cdot \sqrt{-A}$ with $x,A \in \mathbb{Z}$ and $A>0$ squarefree. In particular, we can write \begin{equation} \label{eq:diophantine equation for 2}
    Ax^2+1=2^{n+2}.
\end{equation}
Moreover, since $n\geq 1$ we know that $x$ is odd and we thus also have
\begin{equation} \label{eq:1 mod 8}
    1 \equiv 1-2^{n+2} \equiv x^2 \cdot (-A) \equiv -A \text{ mod } 8
\end{equation}
so that $D(\mathcal{O}_K)=-A$ and $2 \mathcal{O}_K$ splits into two distinct conjugate prime ideals $\mathfrak{p}_2$ and $\overline{\mathfrak{p}}_2$.

Consider now the two conjugate principal ideals $\mathfrak{a}:=\left(\frac{1+x\sqrt{-A}}{2}\right)$ and $\overline{\mathfrak{a}}$. We have
\[
\mathfrak{a} \cdot \overline{\mathfrak{a}} = N_{K/\mathbb{Q}} \left(\frac{1+x\sqrt{-A}}{2} \right)=\frac{Ax^2+1}{4}=2^{n}
\]
hence these two ideals can be divisible only by $\mathfrak{p}_2$ and $\overline{\mathfrak{p}}_2$. Moreover, the ideals $\mathfrak{a}$ and $\overline{\mathfrak{a}}$ are coprime, as one can see by adding their generators. Hence, we can assume without loss of generality that the prime ideal factorization of $\mathfrak{a}$ is
\[
\mathfrak{a}=\left( \frac{1+x\sqrt{-A}}{2} \right)=\mathfrak{p}_2^n
\]
and in particular, the class of $\mathfrak{p}_2$ in $\mathrm{Cl}(\mathcal{O}_K)$ has order dividing $n$. Our goal for the rest of this proof is to prove that this order is precisely $n$. 

Assume by contradiction that this is not the case. Then there exists a prime $q$ and $\tilde{u}, \tilde{v} \in \mathbb{Z}$ such that the following equality of ideals holds:
\begin{equation} \label{eq:equality of ideals ell=2}
    \left(\frac{\tilde{u}+\tilde{v} \sqrt{-A}}{2}\right)^q = \left( \frac{1+x\sqrt{-A}}{2} \right).
\end{equation}
We now distinguish two cases.
\vspace{0.3cm}

\textbf{First case: $q$ odd.} We begin by noting that $A \neq 3$ since by \eqref{eq:1 mod 8} we have $-A \equiv 1 \text{ mod } 8$. This implies, using that fact that $q$ is odd, that all the units in $\mathcal{O}_K$ are $q$-th powers. Hence, there exist $u,v \in \mathbb{Z}$ with $u\equiv v \text{ mod } 2$ such that the following equality \textit{of elements of $K$} holds:
\begin{equation} \label{eq:equality_of_elements}
    \left ( \frac{u+v \sqrt{-A}}{2}\right )^q =  \frac{1+x\sqrt{-A}}{2}.
\end{equation}
Expanding the left-hand side and collecting the rational terms together, we reach the equality
\begin{equation} \label{eq:binomial_equality}
    u^q-\binom{q}{2}u^{q-2}v^2 A+ ... + \binom{q}{q-1} u v^{q-1} (-A)^{\frac{q-1}{2}} = 2^{q-1}.
\end{equation}
Clearly, \eqref{eq:binomial_equality} implies that $u$ divides $2^{q-1}$. Suppose initially that $u$ is even \textit{i.e.} $u=2c$ for some $c \in \mathbb{Z}$. Since $u\equiv v \text{ mod } 2$ we can also write $v=2d$ for some $d \in \mathbb{Z}$. Plugging $2c$ and $2d$ in place of $u$ and $v$ in equation \eqref{eq:binomial_equality}, one obtains that $2^q$ divides $2^{q-1}$, a contradiction.

Hence, $u$ must be odd, \textit{i.e.} $u = \pm 1$. Reducing \eqref{eq:binomial_equality} modulo $q$ we obtain
\[
(\pm 1)^q \equiv 2^{q-1} \equiv 1 \text{ mod } q,
\]
so, since $q$ is odd, we in fact have $u=1$. In particular, equality \eqref{eq:equality_of_elements} now reads
\[
\left( \frac{1+v \sqrt{-A}}{2}\right)^q =  \frac{1+x\sqrt{-A}}{2}
\]
and taking norms we obtain
\[
\left(\frac{1+v^2 A}{4} \right)^q=\frac{Ax^2+1}{4}.
\]
%We now use \eqref{eq:diophantine equation for 2} to write $2^n=(Ax^2+1)/4$, which combined with the above equality gives
%\[
%\left(\frac{1+v^2 A}{4} \right)^q=\frac{Ax^2+1}{4}.
%\]
After some manipulations, this can be rewritten as follows:
\[
A^2 x^2 - (A+v^2A^2)\left[\left(\frac{1+v^2A}{4}\right)^{\frac{q-1}{2}} \right]^2 = -A
\]
so we reach an equation of the form
\begin{equation} \label{eq:Pell}
    U^2-D V^2=-A
\end{equation}
where $U=Ax$, $D=A+v^2 A^2$ and $V=[(1+v^2A)/4]^{(q-1)/2}$. We now wish to apply Mahler's theorem~\cite[Theorem 16]{Nagell_1955}, since all the prime factors of $V$ divide $D$. Let us verify that the hypotheses of the theorem are satisfied:
\begin{itemize}
    \item We certainly have $A \neq 1$ since $-A \equiv 1 \text{ mod } 8$. This also implies that $A \neq D$. Moreover $A$ is squarefree by hypothesis; 
    \item By the previous bullet point we have $D=A(1+v^2A)>1$. Moreover, $D$ is not a perfect square since $v \neq 0$ (because it is odd), so all the primes dividing $A$ cannot divide also $1+v^2 A$;
\end{itemize}
Hence, \cite[Theorem 16]{Nagell_1955} implies that the only positive solutions to \eqref{eq:Pell} are given by the fundamental solution $U,V=U_1,V_1$ (that is, the solution with $U,V>0$ such that $|U+\sqrt{D}V|$ is the smallest possible in absolute value) and by 
\[
U=\frac{U_1^3+3U_1 V_1^2 D}{A}, \hspace{0.5cm} V=\frac{3U_1^2 V_1+DV_1^3}{A}.
\]
In our case, the fundamental solution is given by $U=vA$ and $V=1$. This latter equality yields in particular $v^2 A=3$, hence $A=3$, which is not possible.

By looking at the $V$ of the non-fundamental solution, we find
\[
\left(\frac{1+v^2A}{4}\right)^{\frac{q-1}{2}}=V=\frac{3v^2 A^2 + D}{A}=1+4v^2 A=4(1+v^2 A)-3.
\]
In order to conclude, note that since $v$ is odd, we have 
\[
1+v^2A \equiv 1+1\cdot 7 \equiv 0 \text{ mod } 8
\]
so in particular the left-hand side of the above equality is even. However, the right-hand side of the equality is odd, contradiction. This concludes the proof in this case.
\vspace{0.3cm}

\textbf{Second case: $q=2$.} By \eqref{eq:1 mod 8} we have $A\neq 1,3$ and so $\mathcal{O}_K^\times =\{\pm1\}$. Hence the equality of ideals \eqref{eq:equality of ideals ell=2} yields an equality of elements of $K$
\[
\left(\frac{u+v\sqrt{-A}}{2}\right)^2=\pm \frac{1+x\sqrt{-A}}{2}
\]
where $u,v\in\Z$ are such that $u\equiv v \text{ mod } 2$ (here we have simply set $u=\tilde{u}$ and $v=\tilde{v}$).
Expanding and looking at rational/irrational parts gives
\[
\begin{cases}
u^2-v^2A  &=\pm 2 \\
2uv &=\pm 2x.
\end{cases}
\]
By substituting $v=\pm x/u$ in the first equation and expanding we get
\[
u^4\mp 2u^2-Ax^2=0,
\]
and solving this quadratic equation gives, after using \eqref{eq:diophantine equation for 2}
\[
u^2=2^{n/2+1}\pm1.
\]
Looking modulo $4$ one sees that $u^2=2^{n/2+1}-1$ cannot hold, as $n\ge2$. Hence we must have $u^2=2^{n/2+1}+1$, or otherwise written $(u-1)(u+1)=2^{n/2+1}$. This implies yields $u=\pm3$ and $n=4$. However, this case is excluded by our assumptions and the theorem is proved.
\end{proof}

One can directly verify that in $\mathbb{Q}(\sqrt{-39})$ the prime $2$ splits into two prime ideals having order $4$ in the class group. This observation and the previous proposition imply that for every $n \in \mathbb{Z}_{>0}$ there exists an imaginary quadratic field $K$ where $2$ splits into two prime ideals having order $n$ in $\mathrm{Cl}(\mathcal{O}_K)$.

We now treat the case when $\ell$ is odd. We will prove that for every $n \in \mathbb{Z}_{>0}$ at least one among the imaginary quadratic fields $\mathbb{Q}(\sqrt{1-\ell^n})$ and $\mathbb{Q}(\sqrt{1-4\ell^n})$, call it $K$, has the property that the prime $\ell$ splits in $\mathcal{O}_K$ into two prime ideals having order $n$ in $\mathrm{Cl}(\mathcal{O}_K)$. Let us begin by studying these fields separately.

\begin{proposition} \label{prop:prime_given_order_l}
Let $\ell \in \mathbb{N}$ be an odd prime and let $n \in \mathbb{Z}_{>0}$. Define $K:=\mathbb{Q}(\sqrt{1-\ell^{n}})$. Suppose that:
\begin{enumerate}
    \item Either $n \geq 3$ is odd and $(\ell, n) \neq (3,5)$;
    \item Or $n$ is even and neither $\frac{\ell^{n/2}+1}{2}$ nor $\frac{\ell^{n/2}-1}{2}$ is a square;
\end{enumerate}
Then in $\mathcal{O}_K$ the prime $\ell$ splits into two prime ideals whose corresponding classes in $\mathrm{Cl}(\mathcal{O}_K)$ have order $n$.
\end{proposition}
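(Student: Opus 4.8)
The plan is to mimic the proof of Proposition~\ref{prop:prime_given_order_2}, replacing the explicit principal ideal of norm $2^{n}$ used there by an element of norm $\ell^{n}$ built from the defining equation of $K$. Write $\sqrt{1-\ell^{n}}=x\sqrt{-A}$ with $A>0$ squarefree and $x\in\mathbb{Z}_{>0}$, so that $Ax^{2}+1=\ell^{n}$. Reducing modulo $\ell$ gives $\ell\nmid A$ and shows $-A$ is a square modulo $\ell$, hence $\ell$ splits in $\mathcal{O}_K$, say $\ell\mathcal{O}_K=\mathfrak{p}\overline{\mathfrak{p}}$. The element $\alpha:=1+\sqrt{1-\ell^{n}}\in\mathcal{O}_K$ has norm $1+Ax^{2}=\ell^{n}$, and since $\alpha+\overline{\alpha}=2$ is prime to $\ell$ the ideals $(\alpha)$ and $(\overline{\alpha})$ are coprime; so, up to interchanging $\mathfrak{p}$ and $\overline{\mathfrak{p}}$, we get $(\alpha)=\mathfrak{p}^{n}$. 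Thus the order of $[\mathfrak{p}]$ in $\mathrm{Cl}(\mathcal{O}_K)$ divides $n$, and it remains to show that $\mathfrak{p}^{n/q}$ is non-principal for every prime $q\mid n$.

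Suppose $\mathfrak{p}^{n/q}=(\gamma)$ with $N(\gamma)=\ell^{n/q}=:M\ge\ell\ge 3$; then $\gamma^{q}=\eta\alpha$ for some unit $\eta$. First I would clear away the degenerate discriminants: if $\mathcal{O}_K$ has non-trivial unit group or class number one then $A$ lies in the finite list $\{1,2,3,7,11,19,43,67,163\}$, and substituting this into $Ax^{2}+1=\ell^{n}$ and invoking classical results on equations of this shape (Nagell--Ljunggren type) shows that, under hypothesis~(1) or~(2), the only surviving pair is $(\ell,n)=(3,5)$ --- for which $K=\mathbb{Q}(\sqrt{-2})$ has class number one --- which is excluded, together with pairs for which one of $\tfrac{\ell^{n/2}\pm1}{2}$ is already a square, so that hypothesis~(2) does not apply. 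Hence we may assume $\mathcal{O}_K^{\times}=\{\pm1\}$, so that $\gamma^{q}=\pm\alpha$. If $q=2$ (which forces $n$ even), expanding $\gamma^{2}=\pm\alpha$ in the integral basis of $\mathcal{O}_K$ and eliminating the $\sqrt{-A}$-component yields $(2a^{2}\mp1)^{2}=\ell^{n}$ (or $(u^{2}\mp2)^{2}=4\ell^{n}$ according to whether $\mathcal{O}_K=\mathbb{Z}[\sqrt{-A}]$ or $\mathbb{Z}[\tfrac{1+\sqrt{-A}}{2}]$); in particular $\ell^{n}$ is a perfect square, forcing $n$ even --- a contradiction under hypothesis~(1) --- and then $2a^{2}=\ell^{n/2}\pm1$, i.e.\ one of $\tfrac{\ell^{n/2}+1}{2},\tfrac{\ell^{n/2}-1}{2}$ is a perfect square, contradicting hypothesis~(2).

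The crux is the case $q$ odd. A short analysis of rational parts together with reduction modulo $q$ reduces, after replacing $\gamma$ by $-\gamma$ if necessary, to the situation $\gamma=1+b\sqrt{-A}$ (or $\tfrac{1+v\sqrt{-A}}{2}$) with $\gamma^{q}=\alpha$. Taking norms gives $1+Ab^{2}=M$, and rearranging $\gamma^{q}=\alpha$ exactly as in the proof of Proposition~\ref{prop:prime_given_order_2} produces a generalised Pell equation
\[
U^{2}-DV^{2}=-A,\qquad U=Ax,\quad D=A(1+Ab^{2})=AM,\quad V=M^{(q-1)/2},
\]
in which every prime dividing $V$ --- namely $\ell$ --- divides $D$. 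Mahler's theorem~\cite[Theorem~16]{Nagell_1955} then gives that the positive solutions of this form are the fundamental one, here $(Ab,1)$, and one further solution obtained by cubing the fundamental one. Since $q\ge 3$ forces $V=M^{(q-1)/2}\ge\ell>1$, our solution cannot be the fundamental one, hence it is the cubed solution, whose $V$-coordinate equals $4Ab^{2}+1=4M-3$. Therefore $M^{(q-1)/2}=4M-3$, and one checks directly that for integers $M\ge 3$ this forces $q=5$ and $M=3$, i.e.\ $(\ell,n)=(3,5)$, which is excluded by hypothesis~(1). This contradiction completes the argument.

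I expect the obstacles to be twofold. First, the bookkeeping that pins down $(\ell,n)=(3,5)$ as the unique genuine exception: ruling out the nine exceptional values of $A$ forces an appeal to classical solutions of Nagell--Ljunggren type equations $Ax^{2}+1=\ell^{n}$, and one must carefully track how hypotheses~(1) and~(2) interact with those solutions. Second, performing the reduction to the Pell equation and the precise application of Mahler's theorem in the ramified-basis case $\mathcal{O}_K=\mathbb{Z}[\tfrac{1+\sqrt{-A}}{2}]$, where the auxiliary factor of $4$ that makes everything line up in the $\ell=2$ setting of Proposition~\ref{prop:prime_given_order_2} is missing and the constants $D,U,V$ in the Pell equation have to be recalibrated accordingly.
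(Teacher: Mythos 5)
Your skeleton is exactly the paper's: the principal ideal $(1+x\sqrt{-A})=\mathfrak{p}^n$ of norm $\ell^n$, the reduction to showing $\mathfrak{p}^{n/q}$ is non-principal for each prime $q\mid n$, the case $q=2$ killed by $a^{2}=\frac{\ell^{n/2}\pm1}{2}$ against hypothesis (2), and, for odd $q$, a Pell equation $U^{2}-AMV^{2}=-A$ with fundamental solution $(A|b|,1)$ to which Mahler's theorem applies, forcing $M^{(q-1)/2}=4M-3$ and hence $(q,M)=(5,3)$, i.e.\ $(\ell,n)=(3,5)$. That computation is correct when $\gamma\in\mathbb{Z}[\sqrt{-A}]$, and it is a legitimate alternative to what the paper actually does for odd $q$, namely citing the proof of \cite[Theorem 25]{Nagell_1955}, which is precisely the Diophantine analysis you are reconstructing (the paper itself carries out your style of reconstruction in Proposition \ref{prop:prime_given_order_l_part_2}).

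There are, however, two genuine gaps. First, the ``degenerate discriminant'' step is an unproven assertion: determining which $A\in\{1,2,3,7,11,19,43,67,163\}$ admit solutions of $Ax^{2}+1=\ell^{n}$ compatible with hypotheses (1)--(2) is a family of Lebesgue--Ramanujan--Nagell equations, each a theorem in its own right, and your claim that only $(\ell,n)=(3,5)$ survives is exactly the content you are supposed to be proving. The paper never needs this list: in the $q=2$ case it writes $Ay^{2}=\frac{\ell^{n/2}-1}{2}\cdot\frac{\ell^{n/2}+1}{2}$ with coprime factors and deduces directly from hypothesis (2) that $A$ has at least two prime divisors, hence $A>3$ and $\mathcal{O}_K^{\times}=\{\pm1\}$; in the odd-$q$ case Nagell's theorem already accounts for all units and all $A$. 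Your Mahler argument, by contrast, presupposes that every unit is a $q$-th power, which fails precisely for $A=3$, $q=3$, a configuration you have excluded only by fiat. Second, you leave the half-integer case $\gamma=\frac{u+v\sqrt{-A}}{2}$ with $u,v$ odd unresolved. It does need an argument, but not a recalibrated Pell equation: because $\alpha=1+x\sqrt{-A}$ is an honest algebraic integer (unlike the $\frac{1+x\sqrt{-A}}{2}$ of Proposition \ref{prop:prime_given_order_2}), the rational part of $(u+v\sqrt{-A})^{q}=2^{q}\alpha$ gives $u\mid 2^{q}$, so $u=\pm1$, while reduction mod $q$ gives $u\equiv 2^{q}\equiv2\pmod q$; this is impossible for $q\geq5$, and for $q=3$, $u=-1$ the rational-part identity forces $v^{2}A=3$ and $N(\gamma)=1$, absurd. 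So the obstacle you anticipate dissolves, but only after a computation you did not perform, and the ``recalibration of $D,U,V$'' you propose instead would not go through as stated, since in that basis $D-A=A(v^{2}A-3)/4$ is no longer a perfect square and $(A|b|,1)$ is no longer the fundamental solution.
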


\begin{proof}
We proceed as in the proof of Proposition \ref{prop:prime_given_order_2}. Write $\sqrt{1-\ell^{n}}= x \cdot \sqrt{-A}$ with $x,A \in \mathbb{Z}$ and $A>0$ squarefree, so that we have
\begin{equation} \label{eq:diophantine equation for l}
    Ax^2+1 = \ell^n.
\end{equation}
This equation implies in particular that $-A$ is a square modulo $\ell$, so that $\ell \mathcal{O}_K=\mathfrak{p}_\ell \overline{\mathfrak{p}}_\ell$ with $\mathfrak{p}_\ell, \overline{\mathfrak{p}}_\ell \subseteq \mathcal{O}_K$ distinct prime ideals.
Consider the two conjugate principal ideals $\mathfrak{a}:=(1+x\sqrt{-A})$ and $\overline{\mathfrak{a}}$. We have
\[
\mathfrak{a} \cdot \overline{\mathfrak{a}} = N_{K/\mathbb{Q}}(1+x\sqrt{-A})=\ell^n.
\]
Since $\ell$ is odd, the ideals $\mathfrak{a}$ and $\overline{\mathfrak{a}}$ are coprime, so we have, without loss of generality, $\mathfrak{a}=\mathfrak{p}_\ell^n$. In particular, the class of $\mathfrak{p}_\ell$ in $\mathrm{Cl}(\mathcal{O}_K)$ has order dividing $n$. If this order is not precisely $n$, then there exists a prime $q$ and $u,v \in \mathbb{Z}$ with $u\equiv v \text{ mod } 2$ such that the following equality of ideals holds:
\begin{equation} \label{eq:equality of ideals l not 2}
    (1+x\sqrt{-A})=\left( \frac{u+v\sqrt{-A}}{2} \right)^q.
\end{equation}
Suppose first that $q$ is odd. Then, the proof of \cite[Theorem 25]{Nagell_1955} shows that we must have $x=11$, $A=2$, $\ell=3$ and $q=5$. From \eqref{eq:diophantine equation for l}, we deduce that $q=n=5$, which contradicts assumption $(1)$.

Hence, we must have $q=2$ and, in particular, $n$ is even. Reducing equation \eqref{eq:diophantine equation for l} modulo $4$ and using that $A$ is squarefree, we see that $x$ is even, say $x=2y$ for $y\in\mathbb{Z}$. Now we can write
\begin{equation}\label{eq:lm}
    Ay^2=\left(\frac{\ell^{n/2}-1}{2}\right)\left(\frac{\ell^{n/2}+1}{2}\right).
\end{equation}
The factors on the right hand side are consecutive, hence coprime, integers. By assumption $(2)$ neither of them can be a square, and we deduce that $A$ must be divisible by at least two different primes. In particular, $A>3$ and $\mathcal{O}_K^\times = \{\pm1\}$.

Now from \eqref{eq:equality of ideals l not 2} we get the following equality of elements of $K$:
\[
\left(\frac{u+v\sqrt{-A}}{2}\right)^2=\pm(1+x\sqrt{-A})
\]
Expanding and looking at rational/irrational parts gives
\[
\begin{cases}
u^2-v^2A  &=\pm4 \\
2uv &=\pm4x.
\end{cases}
\]
From the second equation we get $u\equiv v\equiv x \equiv 0 \text{ mod } 2$. So writing $(u',v',y)=(\frac{u}{2},\frac{v}{2},\frac{x}{2})$ and proceeding as in the second case of Proposition \ref{prop:prime_given_order_2}, we get
\[
u^2=\frac{\pm 1 + \sqrt{1+4y^2A}}{2}=\frac{\pm 1 + \ell^{n/2}}{2},
\]
which is excluded by our hypotheses. This concludes the proof.
\end{proof}

\begin{proposition}\label{prop:prime_given_order_l_part_2}
Let $\ell \in \mathbb{N}$ be an odd prime, and $n$ an even positive integer. Define $K:=\Q(\sqrt{1-4\ell^n})$. If $\ell^{n/2}$ is not the sum two consecutive squares, then $\ell$ splits in $\mathcal{O}_K$ into two prime ideals whose classes in $\Cl(K)$ have order $n$.
\end{proposition}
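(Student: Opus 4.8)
The plan is to mirror the proofs of Propositions \ref{prop:prime_given_order_2} and \ref{prop:prime_given_order_l}. Write $\sqrt{1-4\ell^{n}}=x\sqrt{-A}$ with $x,A\in\mathbb{Z}_{>0}$ and $A$ squarefree, so that $Ax^{2}+1=4\ell^{n}$. Since $Ax^{2}=4\ell^{n}-1$ is odd, both $x$ and $A$ are odd, and reducing modulo $8$ gives $A\equiv Ax^{2}\equiv 4\ell^{n}-1\equiv 3\pmod 8$; hence $D(\mathcal{O}_{K})=-A$, and since $Ax^{2}\equiv-1\pmod{\ell}$ forces $-A$ to be a nonzero square modulo $\ell$, the prime $\ell$ splits in $\mathcal{O}_{K}$ as $\mathfrak{p}_{\ell}\overline{\mathfrak{p}}_{\ell}$. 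The first real step is to rule out $A=3$: if $A=3$ then $3x^{2}=(2\ell^{n/2}-1)(2\ell^{n/2}+1)$, the two factors are coprime, and the one of them not divisible by $3$ (there is exactly one unless $\ell=3$, in which case a reduction modulo $3$ already gives a contradiction) has all prime exponents even, hence is a perfect square; a short calculation then yields that either $\ell^{n/2}$ is even — impossible, $\ell$ being odd — or $\ell^{n/2}=m^{2}+(m+1)^{2}$ for some $m\in\mathbb{Z}$, contradicting the hypothesis. Thus $A\ge 11$, so $D(\mathcal{O}_{K})<-4$ and $\mathcal{O}_{K}^{\times}=\{\pm 1\}$.

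Next, as in the cited proofs, $\tfrac{1+x\sqrt{-A}}{2}\in\mathcal{O}_{K}$ has norm $\tfrac{Ax^{2}+1}{4}=\ell^{n}$, and the conjugate ideals $\mathfrak{a}:=\bigl(\tfrac{1+x\sqrt{-A}}{2}\bigr)$ and $\overline{\mathfrak{a}}$ are coprime (their generators sum to $1$), so we may assume $\mathfrak{a}=\mathfrak{p}_{\ell}^{n}$; in particular $[\mathfrak{p}_{\ell}]$ has order dividing $n$. Suppose this order $m$ is a proper divisor of $n$; picking a prime $q\mid n/m$ (so that $m\mid n/q$), the ideal $\mathfrak{p}_{\ell}^{n/q}$ is principal, generated by some $\tfrac{u+v\sqrt{-A}}{2}$ with $u\equiv v\pmod 2$, and raising to the $q$-th power and comparing with the generator of $\mathfrak{a}$ gives, in $K$,
\[
\left(\frac{u+v\sqrt{-A}}{2}\right)^{\!q}=\pm\,\frac{1+x\sqrt{-A}}{2},
\]
where the sign may be absorbed when $q$ is odd.

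For $q=2$, expanding and separating rational and irrational parts yields $u^{2}-v^{2}A=\pm 2$ and $uv=\pm x$; combined with $u\equiv v\pmod 2$ and $A\equiv 3\pmod 8$ this forces $u,v$ odd and $u^{2}-v^{2}A=-2$, and substituting $x=\pm uv$ into $Ax^{2}+1=4\ell^{n}$ collapses to $(u^{2}+1)^{2}=4\ell^{n}$, i.e. $u^{2}+1=2\ell^{n/2}$; writing $u=2m+1$ gives $\ell^{n/2}=m^{2}+(m+1)^{2}$, against the hypothesis. For $q$ odd, I would follow the odd case of Proposition \ref{prop:prime_given_order_2}: the rational part shows $u\mid 2^{q-1}$, a parity argument excludes $u$ even, and reduction modulo $q$ forces $u=1$; taking norms then gives $1+v^{2}A=4\ell^{n/q}$ with $v$ odd. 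Setting $n'=n/q$, $U=Ax$, $D=A(1+v^{2}A)=4A\ell^{n'}$ and $V=\ell^{n'(q-1)/2}$ turns the equality into $U^{2}-DV^{2}=-A$, with every prime factor of $V$ dividing $D$, and the hypotheses of Mahler's theorem \cite[Theorem 16]{Nagell_1955} are readily verified ($A$ squarefree, $A\neq 1$, $A\neq D$, $D>1$ and not a square since $\gcd(A,\ell)=1$). Its fundamental solution is $(vA,1)$, and since our $V>1$ we must land on the exceptional non-fundamental solution, which forces $\ell^{n'(q-1)/2}=1+4v^{2}A=16\ell^{n'}-3$; reduction modulo $\ell^{n'}$ gives $\ell^{n'}\mid 3$, so $\ell=3$ and $n'=1$, and then $3^{(q-1)/2}=45$, which is impossible. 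This contradiction shows $[\mathfrak{p}_{\ell}]$, and likewise $[\overline{\mathfrak{p}}_{\ell}]=[\mathfrak{p}_{\ell}]^{-1}$, has order exactly $n$.

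The main obstacle is the odd-$q$ case: it is the only place where genuine Diophantine input is needed (Mahler's theorem on $U^{2}-DV^{2}=-A$), and extracting a clean numerical contradiction from the exceptional Mahler solution takes some care. By contrast the case $q=2$ and the elimination of $A=3$ are elementary, and they are precisely the steps where the ``not a sum of two consecutive squares'' hypothesis is used.
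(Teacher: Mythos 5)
Your proof is correct and follows essentially the same route as the paper's: the same elimination of $A=3$ via the consecutive-squares hypothesis, the same ideal identity $\mathfrak{a}=\mathfrak{p}_\ell^n$, the same $q=2$ computation leading to $u^2+1=2\ell^{n/2}$, and the same reduction of the odd-$q$ case to Mahler's theorem, ending at the equation $z^{(q-1)/2}=16z-3$ (your $z=\ell^{n'}$). The only differences are cosmetic — your $q=2$ system carries the correct constants $\pm2$ and $uv=\pm x$ where the paper's display has a harmless slip, and you exploit $\tfrac{1+v^2A}{4}=\ell^{n/q}$ to phrase the final contradiction as $\ell^{n'}\mid 3$ rather than $z\mid 3$.
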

\begin{proof}
Write $\sqrt{1-4\ell^n}=x\cdot\sqrt{-A}$ with $x,A\in \mathbb{Z}$ and $A>0$ squarefree. In particular,
\begin{equation}\label{eq:4lm}
Ax^2+1=4\ell^n.
\end{equation}
We have that $x$ is odd as it divides $4\ell^n-1$. Thus $A\equiv 3 \text{ mod }8$ and so $A\neq1$. Suppose we have $A=3$. Then \eqref{eq:4lm} becomes
\[
3x^2=(2\ell^{n/2}-1)(2\ell^{n/2}+1).
\]
Both factors on the right hand side are consecutive odd integers, so they are coprime. Hence one of them must be a perfect square and the other three times a perfect square. Suppose that $2\ell^{n/2}-1$ is a square. Since it is odd, we would have $2\ell^{n/2}-1=(2j+1)^2$ which is equivalent to $\ell^{n/2}=j^2+(j+1)^2$, contradicting our assumptions. This means that there exists $k\in\Z_{>0}$ such that $2\ell^{n/2} +1 = k^2$. However, reducing this equality modulo $4$ shows that this cannot happen either and we conclude that $A\neq3$. In particular, $\mathcal{O}_K^\times = \{\pm 1 \}$.

Equality \eqref{eq:4lm} implies that $\ell$ splits in $\mathcal{O}_K$ into distinct conjugate prime ideals $\mathfrak{p}_{\ell}$ and $\overline{\mathfrak{p}}_\ell$. Now consider the conjugate principal ideals $\mathfrak{a}=\left(\frac{1+x\sqrt{-A}}{2}\right)$ and $\overline{\mathfrak{a}}$. We have
\[
\mathfrak{a}\overline{\mathfrak{a}}=N_{K/\Q}\left(\frac{1+x\sqrt{-A}}{2}\right)=\frac{1+Ax^2}{4}=\ell^n,
\]
and $\mathfrak{a}+\overline{\mathfrak{a}}=\mathcal{O}_K$, so we can assume without loss of generality that $\mathfrak{a}=\mathfrak{p}_\ell^n$. 
Once again the class of $\mathfrak{p}_\ell$ in $\Cl(K)$ has order dividing $n$ and we want to prove this order is exactly $n$. Assume by contradiction it is not the case. Then there exists a prime divisor $q$ of $m$ and $u,v\in \mathbb{Z}$ with $u\equiv v \text{ mod } 2$ such that the following equality of ideals holds: 
\[
\left(\frac{u+v\sqrt{-A}}{2}\right)^q=\left(\frac{1+x\sqrt{-A}}{2}\right).
\]
If $q$ is odd we use the same argument as in the first part of the proof of Proposition~\ref{prop:prime_given_order_2}, using that $A\neq 1,3$ to rule out this case. The argument goes through unchanged up until the final part, when we reach the equality
\[
\left(\frac{1+v^2A}{4}\right)^{\frac{q-1}{2}}=4(1+v^2 A)-3
\]
with $v \in \mathbb{Z}$ odd. In Proposition~\ref{prop:prime_given_order_2} here we concluded by using the fact that $A\equiv 7 \text{ mod } 8$ in that setting. In the current setting however, the congruence $A \equiv 3 \text{ mod } 8$ does not yield any contradiction. Instead to conclude one can notice that after setting $z=\frac{1+v^2A}{4}$, the above equation becomes
\[
z^{\frac{q-1}{2}}=16z-3
\]
which does not have any integral solution.

Hence, we can assume that $q=2$.
Again using the fact that $\mathcal{O}_K^\times =\{\pm1\}$, we have the following equality of elements of $K$:
\[
\left(\frac{u+v\sqrt{-A}}{2}\right)^2=\pm\frac{1+x\sqrt{-A}}{2}
\]
where $u,v\in\Z$ are such that $u\equiv v \text{ mod } 2$. With the usual arguments we arrive at
\[
u^2=\frac{\pm2+\sqrt{4+4x^2A}}{2}=\pm1+\sqrt{1+x^2A}=\pm1+2\ell^{m/2}.
\]
As we showed above this is impossible and the proof is concluded.
\end{proof}

One can directly verify that in $\mathbb{Q}(\sqrt{-971})=\mathbb{Q}(\sqrt{1-4\cdot 3^5})$ the prime $3$ splits into two prime ideals of order $5$ in the class group. This observation and the two previous propositions imply that for every $n \in \mathbb{Z}_{>0}$ and every odd prime $\ell$ there exists an imaginary quadratic field $K$ where $\ell$ splits into two prime ideals having order $n$ in $\mathrm{Cl}(\mathcal{O}_K)$.

We are now ready to prove Theorem \ref{thm:prime_given_order_intro}.

\begin{proof}[Proof of Theorem \ref{thm:prime_given_order_intro}]
By Propositions \ref{prop:prime_given_order_2}, \ref{prop:prime_given_order_l} and \ref{prop:prime_given_order_l_part_2} and the comments in between, it suffices to show that for every even $n \in \mathbb{N}$ the two sets
\[
E_1(n) : =\left \{\ell>2 \text{ prime}: \frac{\ell^{n/2}-1}{2}=j^2\text{ or }\frac{\ell^{n/2}+1}{2}=j^2 \text{ for some } j\in \mathbb{N} \right \},
\]
\[
E_2(n) := \left \{\ell>2 \text{ prime}: \ell^{n/2}=j^2+(j+1)^2 \text{ for some } j\in \mathbb{N} \right \}
\]
have empty intersection. Let $\ell\in E_1(n)\cap E_2(n)$. Then there exists $j\in\N$ such that $\ell^{n/2}=j^2+(j+1)^2$. We have
\[
\frac{\ell^{n/2}-1}{2}=j^2+j,
\]
and
\[
\frac{\ell^{n/2}+1}{2}=j^2+j+1.
\]
Suppose there exists $k\in\N$ such that $k^2=\frac{\ell^{n/2}-1}{2}$. Then $(k-j)(k+j)=j$, so $k+j$ divides $j$, which is impossible as $j,k>0$. Hence, since $\ell\in E_1(n)$, there exists $k\in\N$ such that $k^2=\frac{\ell^{n/2}+1}{2}$, that is, $(k-j)(k+j)=j+1$. Now we must have $k+j$ divides $j+1$ yielding $k=1$ and $j=0$. This is impossible, and the corollary follows.
\end{proof}

Theorem \ref{thm:inverse_volcano_intro} now follows by combining Proposition \ref{prop:abstract_crater_realizable} and Theorem \ref{thm:prime_given_order_intro}. The inverse volcano problem over $\mathbb{F}_p$ is solved.

\section{New questions}

In this final section, we discuss two follow-up projects: first, the inverse volcano problem over more general finite fields. Second, the question of solving the inverse volcano problem with algorithmic efficiency.

\subsection{The inverse volcano problem over \texorpdfstring{$\mathbb{F}_{p^s}$ with $s>1$}{Fpk with k>1}}

The inverse volcano problem over $\mathbb{F}_{p^s}$ with $s>1$ does not always have a solution. An example where $s=2$ is provided by the next proposition.

\begin{proposition}\label{non_inverse_over_F_p^2}
Let $V_0$ be a cycle of length $2$ and let $V$ be the abstract volcano induced by $(V_0,2,1)$, as in Figure~\ref{fig:volcano_counter_example}. For every prime $p \neq 2$, the volcano $V$ is not a connected component of $\mathcal{G}_2(\mathbb{F}_{p^2})$.
\begin{figure}[h!]
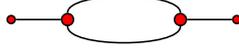

    \centering
    \scalebox{1.5}{\tikzfig{volcanoes/l=2/square/ex}} 
    \caption{The abstract volcano induced by $(V_0,2,1)$.}
    \label{fig:volcano_counter_example}
\end{figure}
\end{proposition}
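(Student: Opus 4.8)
The plan is to argue by contradiction: suppose that for some prime $p\neq 2$ the abstract volcano $V$ is a connected component of $\mathcal{G}_2(\mathbb{F}_{p^2})$, extract the imaginary quadratic data forced by the shape of $V$, and derive an impossible congruence modulo $8$. Since the crater $V_0$ is a cycle of length $2$, by the $\mathbb{F}_{p^2}$-analogue of Proposition~\ref{horizontal} its two vertices are $j$-invariants of ordinary elliptic curves over $\mathbb{F}_{p^2}$ with a common CM order $\mathcal{O}$ in an imaginary quadratic field $K$, with $2$ \emph{splitting} in $K$ and the prime $\mathfrak{L}\mid 2$ of order $2$ in $\mathrm{Cl}(\mathcal{O})$. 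All I need from this is that $2$ splits in $\mathcal{O}_K$, equivalently $D(\mathcal{O}_K)\equiv 1\pmod 8$, i.e.\ $-D(\mathcal{O}_K)\equiv 7\pmod 8$. In particular $D(\mathcal{O}_K)\le -7$, so $V$ contains neither $0$ nor $1728$ (for $\ell=2$ these vertices never lie on a $2$-cycle crater, by Propositions~\ref{prop:zero} and~\ref{prop:1728}), and the volcano structure theory of Section~\ref{sec:volc} applies to $V$ literally.

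Next I would bring in the Frobenius. Fix a crater vertex $j$ and a curve $E/\mathbb{F}_{p^2}$ with $j(E)=j$; its Frobenius $\pi$ satisfies $\pi^2-t\pi+p^2=0$ with $t=\mathrm{Tr}(\pi)\in\mathbb{Z}$, the order $\mathbb{Z}[\pi]\subseteq\mathcal{O}_K$ has conductor $v:=[\mathcal{O}_K:\mathbb{Z}[\pi]]$, and
\[
4p^2-t^2=v^2\,\bigl(-D(\mathcal{O}_K)\bigr).
\]
The key structural input is that the depth of every volcano in the $t$-cordillera equals $v_2(v)$; this is part of the structure theory of Section~\ref{sec:volc} transported mutatis mutandis to $\mathbb{F}_{p^2}$ (concretely: the level-$1$ vertices of $V$ have degree $1$, hence admit no descending $2$-isogeny, so by the analogue of Proposition~\ref{descending} they lie at the bottom level, forcing depth $=v_2(v)$). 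Since $V$ has depth $1$ we conclude $v=2v'$ with $v'$ odd.

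It then remains to conclude by reduction modulo $8$. Substituting $v=2v'$ in the norm equation gives $t^2=4\bigl(p^2-v'^2(-D(\mathcal{O}_K))\bigr)$, so $t=2s$ is even and $s^2=p^2-v'^2(-D(\mathcal{O}_K))$. As $p$ and $v'$ are odd, $p^2\equiv v'^2\equiv 1\pmod 8$, while $-D(\mathcal{O}_K)\equiv 7\pmod 8$, so $s^2\equiv 1-7\equiv 2\pmod 8$; this is impossible, since squares are $\equiv 0,1,4\pmod 8$. The contradiction proves the proposition.

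I expect the only delicate point to be the identification $v_2(v)=1$: one must verify that ``$V$ occurs as a connected component of depth $1$'' pins down the $2$-adic valuation of the Frobenius conductor to be exactly $1$ (not merely $\ge 1$), since it is precisely the parity of $v'$ that drives the contradiction. Morally, the obstruction is a clash between ``$2$ splits'' (which forces $-D(\mathcal{O}_K)\equiv 7\bmod 8$) and ``depth $1$ over a \emph{quadratic} extension of $\mathbb{F}_p$'' (which forces the Frobenius conductor, hence $t$, to be even).
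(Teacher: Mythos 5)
Your proof is correct, but it takes a genuinely different route from the paper's. The paper first uses Lemma \ref{lem:bound_order_classgroup} to pin down $K=\mathbb{Q}(\sqrt{-15})$ as the \emph{only} imaginary quadratic order in which $2$ splits into prime ideals of order $2$ in the class group, and then argues via class field theory: the ring class field $H$ of $\mathbb{Z}[2\sqrt{-15}]$ has $\mathrm{Gal}(H/\mathbb{Q})\cong(\mathbb{Z}/2\mathbb{Z})^3$, so every residue degree over $\mathbb{Q}$ is at most $2$; hence for every split $p$ the field $\mathbb{F}_{p^2}$ already contains the level-$2$ $j$-invariants, and the component containing the crater necessarily has depth $\geq 2$. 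You never identify $K$: you use only that a $2$-cycle crater forces $2$ to split, i.e.\ $-D(\mathcal{O}_K)\equiv 7\pmod 8$, combine this with the norm equation $4p^2=t^2+v^2\lvert D(\mathcal{O}_K)\rvert$ and the fact that depth $1$ means $v_2(v)=1$ exactly (which you correctly pin down, not merely as $\geq 1$, via the analogue of Proposition \ref{descending}: the degree-$1$ vertices admit no descending isogeny, so the cordillera depth is at most $1$), and then read the equation modulo $8$ to get $s^2\equiv 2$, which is impossible. Both arguments are sound, and your reliance on the $\mathbb{F}_{p^2}$-version of the structure theory is sanctioned by the paper's remark at the start of Section \ref{sec:volc}. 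The paper's version explains \emph{what} the component containing such a crater actually looks like (its depth is always $\geq 2$), which is informative for the follow-up questions it raises; yours is more elementary, requiring no Chebotar\"ev or ring class field computation, and in fact proves more than asked: since the mod-$8$ contradiction uses only splitness and $v_2(v)=1$, it shows that \emph{no} abstract $2$-volcano of depth exactly $1$ whose crater falls in the split cases of Proposition \ref{horizontal} can be a connected component of $\mathcal{G}_2(\mathbb{F}_{p^2})$, not merely the one with a $2$-cycle crater.
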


\begin{proof}

Let us first notice that the ring of integers $\mathcal{O}_K$ of $K:=\mathbb{Q}(\sqrt{-15})$ is the only imaginary quadratic order where $2$ splits into two ideals having order $2$ in its class group (for example this can be seen using Lemma \ref{lem:bound_order_classgroup} proved below). Hence, if $V$ were an isogeny volcano in characteristic $p$, the elliptic curves corresponding to the vertices on its crater would have necessarily complex multiplication by $\mathcal{O}_K$ and $p$ would be split in it. Let $H$ be the ring class field relative to the order $\mathcal{O}:= \mathbb{Z}[2\sqrt{-15}]$. The natural exact sequence (see \cite[Chapter I, Proposition 12.9]{Neukirch})
\[
 1 \to \left( \mathcal{O}_K/4\mathcal{O}_K\right)^\times / \{\pm 1\} \to \mathrm{Cl}(\mathcal{O}) \to \mathrm{Cl}(\mathcal{O}_K) \to 1
\]
splits, so we have $\Gal(H/K) \cong (\mathbb{Z}/2\mathbb{Z})^2$. In particular, by \cite[Lemma 9.3]{Cox97}
\[
\Gal(H/\mathbb{Q}) \cong (\mathbb{Z}/2\mathbb{Z})^2 \rtimes \mathbb{Z}/2\mathbb{Z} \cong (\mathbb{Z}/2\mathbb{Z})^3
\]
and this implies that every prime $\mathfrak{p} \subseteq \mathcal{O}_H$ has residue degree bounded by $2$. Hence, for every prime $p \neq 2$ and split in $\mathcal{O}_K$ the field $\mathbb{F}_{p^2}$ contains the $j$-invariants of elliptic curves with complex multiplication by $\mathcal{O}$. The connected component of $\mathcal{G}_2(\mathbb{F}_{p^2})$ containing them must necessarily be a volcano of depth $\geq 2$ whose crater vertices correspond to all the elliptic curves with complex multiplication by $\mathcal{O}_K$. This proves the proposition.
\end{proof}

This fact triggers many questions: what are the obstructions to a possible solution? Are there infinitely many abstract volcanoes that are not connected components of ordinary isogeny graphs over $\mathbb{F}_{p^s}$, for fixed $s>1$? If there are only finitely many counter-examples, how many of them? This will be the subject of further research.

\subsection{Algorithmic corner}

For any given abstract volcano $V$ it is possible to explicitly find some ordinary isogeny graph that contains $V$ as a connected component, as our proofs show. However, computationally speaking, our theorems are far from optimal. For instance, by Theorem \ref{thm:prime_given_order_intro} the prime $3$ certainly splits into two prime ideals having order $5$ in the ideal class group of $\mathbb{Q}(\sqrt{-971})$. However, the same is true for the field $\mathbb{Q}(\sqrt{-47})$, whose discriminant is more than $20$ times smaller in absolute value. In this respect, the following easy lemma can be a useful tool for computational purposes.

\begin{lemma}\label{lem:bound_order_classgroup}
Let $\ell$ be a prime and $n\in \mathbb{N}$ a fixed integer. Suppose that $\mathcal{O}$ is an imaginary quadratic order of conductor coprime to $\ell$ where $\ell$ splits into two prime ideals having order $n$ in $\mathrm{Cl}(\mathcal{O})$. Then $|D(\mathcal{O})| \leq 4\ell^n-1$.
\end{lemma}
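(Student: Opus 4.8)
The plan is to turn the group-theoretic hypothesis into a diophantine one by taking an $n$-th power and reading off norms. First I would pick one of the two primes $\mathfrak{p}_\ell \subseteq \mathcal{O}$ above $\ell$. Since the conductor of $\mathcal{O}$ is coprime to $\ell$, the ideal $\mathfrak{p}_\ell$ is invertible and lies in the subgroup of $\mathrm{Cl}(\mathcal{O})$ generated by ideals prime to the conductor; by hypothesis its class has order $n$, so $\mathfrak{p}_\ell^n = \alpha\mathcal{O}$ for some $\alpha \in \mathcal{O}$. As $\ell$ splits, $\mathfrak{p}_\ell$ has residue degree $1$, so $N(\mathfrak{p}_\ell)=\ell$; multiplicativity of the ideal norm on ideals prime to the conductor then gives $|N_{K/\mathbb{Q}}(\alpha)| = N(\mathfrak{p}_\ell^n) = \ell^n$, and since $K$ is imaginary quadratic this is simply $N_{K/\mathbb{Q}}(\alpha) = \ell^n$.

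Next I would write $\alpha = \frac{u + v\sqrt{D}}{2}$ with $u,v \in \mathbb{Z}$ and $D := D(\mathcal{O}) < 0$, so that the norm equation becomes
\[
u^2 + v^2|D| = 4\ell^n .
\]
The key point is that $v \neq 0$: otherwise $\alpha \in \mathbb{Q}\cap\mathcal{O} = \mathbb{Z}$, the principal ideal $\alpha\mathcal{O}$ would be stable under complex conjugation, and hence $\mathfrak{p}_\ell^n = \overline{\mathfrak{p}}_\ell^n$; but $\mathfrak{p}_\ell \neq \overline{\mathfrak{p}}_\ell$ are distinct invertible primes, so unique factorization of ideals prime to the conductor forces $n = 0$, a contradiction. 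With $v^2 \geq 1$ in hand, the norm equation yields $|D| \leq v^2|D| = 4\ell^n - u^2 \leq 4\ell^n$.

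To get the sharper bound $|D| \leq 4\ell^n - 1$ I would rule out equality: if $|D| = 4\ell^n$, the displayed equation forces $u = 0$ and $v = \pm 1$, so $\alpha = \pm\tfrac{\sqrt{D}}{2}$ and $\alpha^2 = D/4 = -\ell^n \in \mathbb{Z}$, whence $\mathfrak{p}_\ell^{2n} = (\ell^n) = \mathfrak{p}_\ell^n\overline{\mathfrak{p}}_\ell^n$ and again $\mathfrak{p}_\ell^n = \overline{\mathfrak{p}}_\ell^n$, which is impossible as before. This gives the claim.

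There is essentially no hard step here; the only thing to be careful about is that $\mathcal{O}$ need not be maximal, so one should invoke the coprimality of the conductor to $\ell$ precisely where multiplicativity of the norm and unique factorization into prime ideals are used (these hold for the monoid of ideals prime to the conductor, cf. \cite[\textsection 7]{Cox97}).
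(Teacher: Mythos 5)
Your argument is correct and follows essentially the same route as the paper: write $\mathfrak{L}^n=(\alpha)$ with $N_{K/\mathbb{Q}}(\alpha)=\ell^n$, observe $\alpha\notin\mathbb{Z}$, and bound $|D(\mathcal{O})|$ by a lower bound on norms of non-rational elements of $\mathcal{O}$. Your separate exclusion of the boundary case $|D(\mathcal{O})|=4\ell^n$ (via $u=0$, $v=\pm1$ and $\mathfrak{p}_\ell^n=\overline{\mathfrak{p}}_\ell^n$) is in fact slightly more careful than the paper's blanket inequality $N_{K/\mathbb{Q}}(\beta)\geq \frac{1+|D(\mathcal{O})|}{4}$ for $\beta\in\mathcal{O}\setminus\mathbb{Z}$, which as literally stated fails for $\beta=\pm\sqrt{D}/2$ when $4\mid D$, precisely the case your argument handles explicitly.
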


\begin{proof}
Set $K:=\mathrm{Frac}(\mathcal{O})$ and let $\mathfrak{L}$ and $\overline{\mathfrak{L}}$ be the two distinct prime ideals lying above $\ell$. Then by assumption there exists $\alpha \in \mathcal{O}$ such that $\mathfrak{L}^n=(\alpha)$. We have that $\alpha \not \in \mathbb{Z}$ since otherwise $n$ would be even and $\alpha=\pm \ell^{n/2}$, implying that $\mathfrak{L}=\overline{\mathfrak{L}}$. The lemma now follows from the fact that every element $\beta \in \mathcal{O} \setminus \mathbb{Z}$ satisfies $N_{K/\mathbb{Q}}(\beta) \geq \frac{1+|D(\mathcal{O})|}{4}$.
\end{proof}

Applying the above lemma sometimes leads to easier (\textit{i.e.} with associated imaginary quadratic field that has smaller discriminant) solutions than the ones provided by Theorem \ref{thm:prime_given_order_intro}. 

\begin{example}

Let $V_0$ be a cycle of length $6$ and let $V$ be the volcano induced by $(V_0,2,1)$, as in Figure~\ref{fig:volcano_ex}.

\begin{figure}[h!]
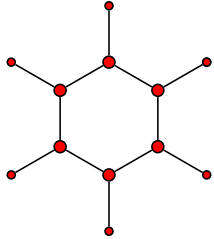

    \centering
    \scalebox{1.5}{\tikzfig{volcanoes/l=3/103/8}} 
    \caption{The abstract volcano $V$.}
    \label{fig:volcano_ex}
\end{figure}

We claim that $V$ is realisable as a connected component of $\mathcal{G}_2(\mathbb{F}_{103})$ (more precisely, as a connected component of the $8$-cordillera in this graph). Indeed, using Lemma \ref{lem:bound_order_classgroup} one finds that $K=\mathbb{Q}(\sqrt{-87})$ is an imaginary quadratic field where $\ell=2$ splits into two ideals having order $6$ in $\mathrm{Cl}(\mathcal{O}_K)$. The prime $p=103$ splits completely in the ring class field of $\mathbb{Z}[\sqrt{-87}]$ but does not split completely in the ring class field of  $\mathbb{Z}[2\sqrt{-87}]$. We have
\[
4\cdot 103 = 8^2+87\cdot 2^2
\]
and the results explained in this paper now imply the result.
\end{example}

We encourage the reader to work out that the three abstract volcanoes appearing in Figure \ref{fig:volcanoes} are connected components, respectively, of the isogeny graphs $\mathcal{G}_2(\mathbb{F}_{1009})$, $\mathcal{G}_3(\mathbb{F}_{1303})$ and $\mathcal{G}_3(\mathbb{F}_{997})$.

\newpage

\appendix

\section{Full study for \texorpdfstring{$p=1009$ and $\ell=3$}{p=1009 and l=3}}
\label{app:1009}
In this section, we present the $3$-isogeny graph $\G_{3}(\F_{1009})$, and count the number of vertices in each connected components. This count is essentially another take on the Hurwitz class number formula (see~\cite{Cox97}, formula (14.21))
\[
p = \frac{1}{2}\sum_{0<|a|<2\sqrt{p}}H(a^2-4p).
\]
From Lemma~\ref{lem:cordillera_empty}, we expect traces of maximal absolute value $\lfloor2\sqrt{1009}\rfloor=63$.

Note that $1009\equiv1$ mod $12$, therefore $j=0$ and $j=719$ are $j$-invariants of ordinary elliptic curves (where $1728\equiv 719$ mod $1009$).

\subsection{Supersingulars}

We can check that the set of supersingular $j$-invariants is $$\{149,155,157,529,602,605,838,890,897,905\},$$ of cardinality $10$.

\subsection{More automorphisms than usual: \texorpdfstring{$j=0$}{j=0}}
As $1009\equiv1$ mod $3$, $0$ is ordinary, and we can compute the three traces in its cordillera: $(t_1,t_2,t_3)=(19,43,62)$. The orders $\Z[\pi_{t_i}]$ for $i\in\{t_1,t_2,t_3\}$ have pairwise coprime conductors $f_1,f_2,f_3$. If two of the conductors had a common prime factor $q$, then there would exist a $q$-isogeny connecting curves with different traces, which is impossible if neither of the curves have $j$-invariant $0$ or $1728$. We obtain $f_1=5\cdot7$, $f_2=3^3$ and $f_3=2^3$. We deduce that $0$ must connect to points corresponding to curves of trace $t_2=43$, as $\ell|f_2$. We expect volcanoes in the cordillera to have depth $0$ if the trace of their curves is $t_1$ or $t_3$, and depth $3$ if it is $t_2$ or the curve has $j$-invariant $0$. How $0$ connects to the rest of the cordillera has been described in Proposition~\ref{prop:zero}. This behaviour can be observed in Figure~\ref{fig:1009:0} with vertex $j=0$ pictured in blue. 

\begin{figure}[h]
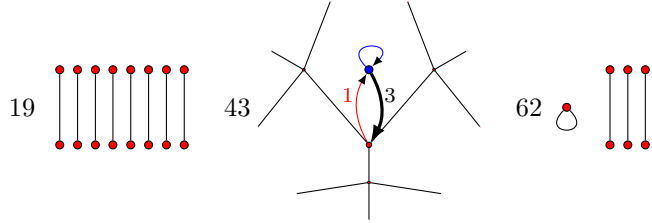

    \centering
    \scalebox{1}{\tikzfig{volcanoes/l=3/1009/19}}
    \scalebox{1}{\tikzfig{volcanoes/l=3/1009/43}} 
    \scalebox{1}{\tikzfig{volcanoes/l=3/1009/62}}
    \caption{The $19/43/62$-cordillera in $\G_3(\F_{1009})$.}
    \label{fig:1009:0}
\end{figure}

Using Lemma~\ref{count}, the number of vertices in the volcano containing $0$ is given by \[1+\frac{2\cdot 1}{6}\times\left(3-\left(\frac{D(\mathcal{O}_K)}{3}\right)\right)\frac{3^{3}-1}{3-1}=1+\frac{27-1}{3-1}=\color{red}14\color{black}\] as $K=\Q(\sqrt{-3})$.  We can compute the number of vertices in the cordilleras associated with $K$ that are not connected to $j=0$, belt by belt. The conductors of $\Z[\pi_{t_i}]$ stripped of multiples of $3$ are $5\cdot7$ for $t_1=19$, $1$ for $t_2=43$ and $2^3$ for $t_3=62$, therefore the number of vertices for trace $19$ is \[h(5^2\cdot(-3))+h(7^2\cdot(-3))+h(35^2\cdot(-3))=2+2+12=\color{red}16\color{black}\] and \[h(2^2\cdot(-3))+h(4^2\cdot(-3))+h(8^2\cdot(-3))=1+2+4=\color{red}7\color{black}\] for trace $62$. There are no additional belts for trace $43$ because the $f_2$ is a power of $\ell$.

\subsection{More automorphisms than usual: \texorpdfstring{$j=1728$}{j=1728}}

The same study can be done for $1728$ (or rather $719$ as we are in $\F_{1009}$), $1728\equiv1$ mod $4$ so we know that two traces are associated with $719$ in this case: $(t_4,t_5)=(30,56)$. The conductors are coprime $f_4=2^2\cdot7$ and $f_5=3\cdot5$, hence according to Proposition~\ref{prop:1728}, $719$ in blue connects to $\frac{3+1}{2}$ curves of trace $t_5$. Figure~\ref{fig:1009:1728} shows the non-regular subgraph for $1728$.

\begin{figure}[h]
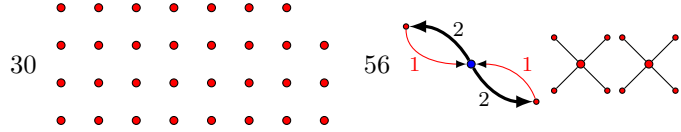

    \centering
    \scalebox{1}{\tikzfig{volcanoes/l=3/1009/30}}
    \scalebox{1}{\tikzfig{volcanoes/l=3/1009/56}} 
    \caption{The $30$- and $56$-cordilleras in $\G_3(\F_{1009})$.}
    \label{fig:1009:1728}
\end{figure}

As $K=\Q(\sqrt{-4})$, we can compute the number of points in the belt where the vertex $1728$ attaches: 
\[
1+\frac{2\cdot1}{4}\left(3-\left(\frac{D(\mathcal{O}_K)}{3}\right)\right)\frac{3^1-1}{3-1}=1+\frac{4}{2}=\color{red}3\color{black}.
\]
Counting the other vertices gives
\[
\sum_{1<m|28}h(-4m^2)=\color{red}31\color{black}
\]
for trace $30$ and 
\[
h(-4\cdot5^2)\left(1+(3-(-1))\left(\frac{3^1-1}{3-1}\right)\right)=\color{red}10\color{black}
\]
for trace $56$. 

\subsection{Regular case}

In this section we present an extensive classification of all volcanoes appearing as regular connected components in $\G_3(\F_{1009})$, and we count their number of vertices. We go through the graph belt by belt and expose the results in four tables.

The first table, Figure~\ref{graph:1009:1} lists volcanoes that are single vertices. We can note that they arise in cordilleras whose traces are multiples of $3$. Indeed if the trace is a multiple of $3$, $v$ in the equation $4p=t^2-D(\mathcal{O}_K)v^2$ can not be divisible by three, and the Kronecker symbol $(D(\mathcal{O}_K)/3)$ is always $-1$, meaning that $3$ is inert in $K$. This is also the case for the $30$-cordillera, already discussed in the non-regular case (see Figure~\ref{fig:1009:1728}).

Secondly in Figure~\ref{graph:1009:2} we look at cordilleras with volcanoes that are just two vertices connected by a single edge. These are frequent and can be listed in the same way. Another example is the $19$-cordillera shown in Figure~\ref{fig:1009:0}, with $16$ vertices and $8$ volcanoes.

\begin{figure}[H]
\centering
\begin{tabular}{c|c|c|c|l}
    Cordillera & Associated & Belt & Vertex & Volcano\\
    (Trace) & field & (Order) & count & shape
    \\\hline
     $3$ & $\Q(\sqrt{-4027})$ & $\Z[\sqrt{-4027}]$ & $h(-4027)=\color{red}9\color{black}$ & \scalebox{1}{\tikzfig{volcanoes/point}}$\times 9$\\\hline
     $6$ & $\Q(\sqrt{-40})$ & $\Z[\sqrt{-40}]$ & $h(-40)=\color{red}2\color{black}$ & \scalebox{1}{\tikzfig{volcanoes/point}}$\times 2$\\  &   & $\Z[2\sqrt{-40}]$ & $h(-160)=\color{red}4\color{black}$ & \scalebox{1}{\tikzfig{volcanoes/point}}$\times 4$\\  &   & $\Z[5\sqrt{-40}]$ & $h(-1000)=\color{red}10\color{black}$ & \scalebox{1}{\tikzfig{volcanoes/point}}$\times 10$\\
      &   & $\Z[10\sqrt{-40}]$ & $h(-4000)=\color{red}20\color{black}$ & \scalebox{1}{\tikzfig{volcanoes/point}}$\times 20$\\\hline
     $9$ & $\Q(\sqrt{-3955})$ & $\Z[\sqrt{-3955}]$ & $h(-3955)=\color{red}12\color{black}$ & \scalebox{1}{\tikzfig{volcanoes/point}}$\times 12$\\\hline
     $12$ & $\Q(\sqrt{-3892})$ & $\Z[\sqrt{-3892}]$ & $h(-3892)=\color{red}12\color{black}$ & \scalebox{1}{\tikzfig{volcanoes/point}}$\times 12$\\\hline
     $15$ & $\Q(\sqrt{-3811})$ & $\Z[\sqrt{-3811}]$ & $h(-3811)=\color{red}10\color{black}$ & \scalebox{1}{\tikzfig{volcanoes/point}}$\times 10$\\\hline
     $18$ & $\Q(\sqrt{-232})$ & $\Z[\sqrt{-232}]$ & $h(-232)=\color{red}2\color{black}$ & \scalebox{1}{\tikzfig{volcanoes/point}}$\times 2$\\
      &   & $\Z[2\sqrt{-232}]$ & $h(-928)=\color{red}4\color{black}$ & \scalebox{1}{\tikzfig{volcanoes/point}}$\times 4$\\
      &   & $\Z[4\sqrt{-232}]$ & $h(-3712)=\color{red}8\color{black}$ & \scalebox{1}{\tikzfig{volcanoes/point}}$\times 8$\\\hline
     $21$ & $\Q(\sqrt{-3595})$ & $\Z[\sqrt{-3595}]$ & $h(-3595)=\color{red}8\color{black}$ & \scalebox{1}{\tikzfig{volcanoes/point}}$\times 8$\\\hline
     $24$ & $\Q(\sqrt{-3460})$ & $\Z[\sqrt{-3460}]$ & $h(-3460)=\color{red}16\color{black}$ & \scalebox{1}{\tikzfig{volcanoes/point}}$\times 16$\\\hline
     $27$ & $\Q(\sqrt{-3307})$ & $\Z[\sqrt{-3307}]$ & $h(-3307)=\color{red}9\color{black}$ & \scalebox{1}{\tikzfig{volcanoes/point}}$\times 9$\\\hline
     $33$ & $\Q(\sqrt{-2947})$ & $\Z[\sqrt{-2947}]$ & $h(-2947)=\color{red}8\color{black}$ & \scalebox{1}{\tikzfig{volcanoes/point}}$\times 8$\\\hline
     $36$ & $\Q(\sqrt{-2740})$ & $\Z[\sqrt{-2740}]$ & $h(-2740)=\color{red}12\color{black}$ & \scalebox{1}{\tikzfig{volcanoes/point}}$\times 12$\\\hline
     $39$ & $\Q(\sqrt{-2515})$ & $\Z[\sqrt{-2515}]$ & $h(-2515)=\color{red}6\color{black}$ & \scalebox{1}{\tikzfig{volcanoes/point}}$\times 6$\\\hline
     $42$ & $\Q(\sqrt{-568})$ & $\Z[\sqrt{-568}]$ & $h(-568)=\color{red}4\color{black}$ & \scalebox{1}{\tikzfig{volcanoes/point}}$\times 4$\\ & & $\Z[2\sqrt{-568}]$ & $h(-2272)=\color{red}8\color{black}$ & \scalebox{1}{\tikzfig{volcanoes/point}}$\times 8$\\\hline
     $45$ & $\Q(\sqrt{-2011})$ & $\Z[\sqrt{-2011}]$ & $h(-2011)=\color{red}7\color{black}$ & \scalebox{1}{\tikzfig{volcanoes/point}}$\times 7$\\\hline
     $48$ & $\Q(\sqrt{-1732})$ & $\Z[\sqrt{-1732}]$ & $h(-1732)=\color{red}12\color{black}$ & \scalebox{1}{\tikzfig{volcanoes/point}}$\times 12$\\\hline
     $51$ & $\Q(\sqrt{-1435})$ & $\Z[\sqrt{-1435}]$ & $h(-1435)=\color{red}4\color{black}$ & \scalebox{1}{\tikzfig{volcanoes/point}}$\times 4$\\\hline
     $54$ & $\Q(\sqrt{-280})$ & $\Z[\sqrt{-280}]$ & $h(-280)=\color{red}4\color{black}$ & \scalebox{1}{\tikzfig{volcanoes/point}}$\times 4$\\ &  & $\Z[2\sqrt{-280}]$ & $h(-1120)=\color{red}8\color{black}$ & \scalebox{1}{\tikzfig{volcanoes/point}}$\times 8$\\\hline
     $57$ & $\Q(\sqrt{-787})$ & $\Z[\sqrt{-787}]$ & $h(-787)=\color{red}5\color{black}$ & \scalebox{1}{\tikzfig{volcanoes/point}}$\times 5$\\\hline
     $60$ & $\Q(\sqrt{-436})$ & $\Z[\sqrt{-436}]$ & $h(-436)=\color{red}6\color{black}$ & \scalebox{1}{\tikzfig{volcanoes/point}}$\times 6$\\\hline
     $63$ & $\Q(\sqrt{-67})$ & $\Z[\sqrt{-67}]$ & $h(-67)=\color{red}1\color{black}$ & \scalebox{1}{\tikzfig{volcanoes/point}} 
\end{tabular}
\caption{Distribution of single-point volcanoes in $\G_3(\F_{1009})$.}
\label{graph:1009:1}
\end{figure}

\begin{figure}[H]
\centering
\begin{tabular}{c|c|c|c|l}
    Cordillera & Associated & Belt & Vertex & Volcano\\
    (Trace) & field & (Order) & count & shape\\\hline
     $1$ & $\Q(\sqrt{-4035})$ & $\Z[\sqrt{-4035}]$ & $h(-4035)=\color{red}12\color{black}$ & \scalebox{1}{\tikzfig{volcanoes/line}}$\times 6$\\\hline
     $4$ & $\Q(\sqrt{-4020})$ & $\Z[\sqrt{-4020}]$ & $h(-4020)=\color{red}16\color{black}$ & \scalebox{1}{\tikzfig{volcanoes/line}}$\times 8$\\\hline
     $5$ & $\Q(\sqrt{-4011})$ & $\Z[\sqrt{-4011}]$ & $h(-4011)=\color{red}20\color{black}$ & \scalebox{1}{\tikzfig{volcanoes/line}}$\times 10$\\\hline
     $8$ & $\Q(\sqrt{-3972})$ & $\Z[\sqrt{-3972}]$ & $h(-3972)=\color{red}12\color{black}$ & \scalebox{1}{\tikzfig{volcanoes/line}}$\times 6$\\\hline
     $10$ & $\Q(\sqrt{-984})$ & $\Z[\sqrt{-984}]$ & $h(-984)=\color{red}12\color{black}$ & \scalebox{1}{\tikzfig{volcanoes/line}}$\times 6$\\ & & $\Z[2\sqrt{-984}]$ & $h(-3936)=\color{red}24\color{black}$ & \scalebox{1}{\tikzfig{volcanoes/line}}$\times 12$\\\hline
     $13$ & $\Q(\sqrt{-3867})$ & $\Z[\sqrt{-3867}]$ & $h(-3867)=\color{red}14\color{black}$ & \scalebox{1}{\tikzfig{volcanoes/line}}$\times 7$\\\hline
     $14$ & $\Q(\sqrt{-15})$ & $\Z[\sqrt{-15}]$ & $h(-15)=\color{red}2\color{black}$ & \scalebox{1}{\tikzfig{volcanoes/line}} \\ & & $\Z[2\sqrt{-15}]$ & $h(-60)=\color{red}2\color{black}$ & \scalebox{1}{\tikzfig{volcanoes/line}} \\ & & $\Z[4\sqrt{-15}]$ & $h(-240)=\color{red}4\color{black}$ & \scalebox{1}{\tikzfig{volcanoes/line}}$\times 2$\\ & & $\Z[8\sqrt{-15}]$ & $h(-960)=\color{red}8\color{black}$ & \scalebox{1}{\tikzfig{volcanoes/line}}$\times 4$\\ & & $\Z[16\sqrt{-15}]$ & $h(-3840)=\color{red}16\color{black}$ & \scalebox{1}{\tikzfig{volcanoes/line}}$\times 8$\\\hline
     $17$ & $\Q(\sqrt{-3747})$ & $\Z[\sqrt{-3747}]$ & $h(-3747)=\color{red}12\color{black}$ & \scalebox{1}{\tikzfig{volcanoes/line}}$\times 6$\\\hline
     $22$ & $\Q(\sqrt{-888})$ & $\Z[\sqrt{-888}]$ & $h(-888)=\color{red}12\color{black}$ & \scalebox{1}{\tikzfig{volcanoes/line}}$\times 6$\\& & $\Z[2\sqrt{-888}]$ & $h(-3552)=\color{red}24\color{black}$ & \scalebox{1}{\tikzfig{volcanoes/line}}$\times 12$\\\hline
     $23$ & $\Q(\sqrt{-3507})$ & $\Z[\sqrt{-3507}]$ & $h(-3507)=\color{red}8\color{black}$ & \scalebox{1}{\tikzfig{volcanoes/line}}$\times 4$\\\hline
     $26$ & $\Q(\sqrt{-840})$ & $\Z[\sqrt{-840}]$ & $h(-840)=\color{red}8\color{black}$ & \scalebox{1}{\tikzfig{volcanoes/line}}$\times 4$\\ & & $\Z[2\sqrt{-840}]$ & $h(-3360)=\color{red}16\color{black}$ & \scalebox{1}{\tikzfig{volcanoes/line}}$\times 8$\\\hline
     $28$ & $\Q(\sqrt{-3252})$ & $\Z[\sqrt{-3252}]$ & $h(-3252)=\color{red}12\color{black}$ & \scalebox{1}{\tikzfig{volcanoes/line}}$\times 6$\\\hline
     $31$ & $\Q(\sqrt{-123})$ & $\Z[\sqrt{-123}]$ & $h(-123)=\color{red}2\color{black}$ & \scalebox{1}{\tikzfig{volcanoes/line}} \\ & & $\Z[5\sqrt{-123}]$ & $h(-3075)=\color{red}12\color{black}$ & \scalebox{1}{\tikzfig{volcanoes/line}}$\times 6$\\\hline
     $32$ & $\Q(\sqrt{-3012})$ & $\Z[\sqrt{-3012}]$ & $h(-3012)=\color{red}12\color{black}$ & \scalebox{1}{\tikzfig{volcanoes/line}}$\times 6$\\\hline
     $35$ & $\Q(\sqrt{-2811})$ & $\Z[\sqrt{-2811}]$ & $h(-2811)=\color{red}16\color{black}$ & \scalebox{1}{\tikzfig{volcanoes/line}}$\times 8$\\\hline
     $37$ & $\Q(\sqrt{-2667})$ & $\Z[\sqrt{-2667}]$ & $h(-2667)=\color{red}8\color{black}$ & \scalebox{1}{\tikzfig{volcanoes/line}}$\times 4$\\\hline
     $40$ & $\Q(\sqrt{-2436})$ & $\Z[\sqrt{-2436}]$ & $h(-2436)=\color{red}16\color{black}$ & \scalebox{1}{\tikzfig{volcanoes/line}}$\times 8$\\\hline
     $41$ & $\Q(\sqrt{-2355})$ & $\Z[\sqrt{-2355}]$ & $h(-2355)=\color{red}12\color{black}$ & \scalebox{1}{\tikzfig{volcanoes/line}}$\times 6$\\\hline
     $44$ & $\Q(\sqrt{-84})$ & $\Z[\sqrt{-84}]$ & $h(-84)=\color{red}4\color{black}$ & \scalebox{1}{\tikzfig{volcanoes/line}}$\times 2$\\ & & $\Z[5\sqrt{-84}]$ & $h(-2100)=\color{red}16\color{black}$ & \scalebox{1}{\tikzfig{volcanoes/line}}$\times 8$\\\hline
     $46$ & $\Q(\sqrt{-120})$ & $\Z[\sqrt{-120}]$ & $h(-120)=\color{red}4\color{black}$ & \scalebox{1}{\tikzfig{volcanoes/line}}$\times 2$\\ & & $\Z[2\sqrt{-120}]$ & $h(-480)=\color{red}8\color{black}$ & \scalebox{1}{\tikzfig{volcanoes/line}}$\times 4$\\ & & $\Z[4\sqrt{-120}]$ & $h(-1920)=\color{red}16\color{black}$ & \scalebox{1}{\tikzfig{volcanoes/line}}$\times 8$\\\hline
     $49$ & $\Q(\sqrt{-1635})$ & $\Z[\sqrt{-1635}]$ & $h(-1635)=\color{red}8\color{black}$ & \scalebox{1}{\tikzfig{volcanoes/line}}$\times 4$\\\hline
     $50$ & $\Q(\sqrt{-24})$ & $\Z[\sqrt{-24}]$ & $h(-24)=\color{red}2\color{black}$ & \scalebox{1}{\tikzfig{volcanoes/line}} \\
     & & $\Z[2\sqrt{-24}]$ & $h(-96)=\color{red}4\color{black}$ & \scalebox{1}{\tikzfig{volcanoes/line}}$\times 2$\\  & & $\Z[4\sqrt{-24}]$ & $h(-384)=\color{red}8\color{black}$ & \scalebox{1}{\tikzfig{volcanoes/line}}$\times 4$\\  & & $\Z[8\sqrt{-24}]$ & $h(-1536)=\color{red}16\color{black}$ & \scalebox{1}{\tikzfig{volcanoes/line}}$\times 8$\\\hline
     $53$ & $\Q(\sqrt{-1227})$ & $\Z[\sqrt{-1227}]$ & $h(-1227)=\color{red}4\color{black}$ & \scalebox{1}{\tikzfig{volcanoes/line}}$\times 2$\\\hline
     $55$ & $\Q(\sqrt{-1011})$ & $\Z[\sqrt{-1011}]$ & $h(-1011)=\color{red}12\color{black}$ & \scalebox{1}{\tikzfig{volcanoes/line}}$\times 6$\\\hline
     $58$ & $\Q(\sqrt{-168})$ & $\Z[\sqrt{-168}]$ & $h(-168)=\color{red}4\color{black}$ & \scalebox{1}{\tikzfig{volcanoes/line}}$\times 2$\\ & & $\Z[2\sqrt{-168}]$ & $h(-672)=\color{red}8\color{black}$ & \scalebox{1}{\tikzfig{volcanoes/line}}$\times 4$\\\hline
     $59$ & $\Q(\sqrt{-555})$ & $\Z[\sqrt{-555}]$ & $h(-555)=\color{red}4\color{black}$ & \scalebox{1}{\tikzfig{volcanoes/line}}$\times 2$
     \end{tabular}
\caption{Distribution of double-point volcanoes in $\G_3(\F_{1009})$.}
\label{graph:1009:2}
\end{figure}

The next cordilleras have volcanoes in an X-shape. They are pictured in Figure~\ref{graph:1009:3}.

\begin{figure}[H]
\centering
\begin{tabular}{c|c|c|c|l}
    Cordillera & Associated & Belt & Vertex & Volcano\\
    (Trace) & field & (Order) & count & shape\\\hline
     $2$ & $\Q(\sqrt{-7})$ & $\Z[\sqrt{-7}]$ & $\left(1+(3-(-7/3))\left(\frac{3^1-1}{3-1}\right)\right)h(-7) =\color{red}5\color{black}$ & 
     %\left(1+(3-(-7/3))\left(\frac{3^1-1}{3-1}\right)\right)
     \scalebox{1}{\tikzfig{volcanoes/cross}} \\ & & $\Z[2\sqrt{-7}]$ & $\left(1+(3-(-28/3))\left(\frac{3^1-1}{3-1}\right)\right)h(-28)=\color{red}5\color{black}$ & 
     \scalebox{1}{\tikzfig{volcanoes/cross}} \\ & & $\Z[4\sqrt{-7}]$ & $\left(1+(3-(-112/3))\left(\frac{3^1-1}{3-1}\right)\right)h(-112)=\color{red}10\color{black}$ & 
     \scalebox{1}{\tikzfig{volcanoes/cross}}$\times 2$\\     & & $\Z[8\sqrt{-7}]$ & $\left(1+(3-(-448/3))\left(\frac{3^1-1}{3-1}\right)\right)h(-448)=\color{red}20\color{black}$ & 
     \scalebox{1}{\tikzfig{volcanoes/cross}}$\times 4$\\\hline
     $25$ & $\Q(\sqrt{-379})$ & $\Z[\sqrt{-379}]$ & $\left(1+(3-(-379/3))\left(\frac{3^1-1}{3-1}\right)\right)h(-379)=\color{red}15\color{black}$ & \scalebox{1}{\tikzfig{volcanoes/cross}}$\times 3$\\\hline
     $29$ & $\Q(\sqrt{-355})$ & $\Z[\sqrt{-355}]$ & $\left(1+(3-(-355/3))\left(\frac{3^1-1}{3-1}\right)\right)h(-355)=\color{red}20\color{black}$ & \scalebox{1}{\tikzfig{volcanoes/cross}}$\times 4$\\\hline
     $52$ & $\Q(\sqrt{-148})$ & $\Z[\sqrt{-148}]$ & $\left(1+(3-(-148/3))\left(\frac{3^1-1}{3-1}\right)\right)h(-148)=\color{red}10\color{black}$ & \scalebox{1}{\tikzfig{volcanoes/cross}}$\times 2$
\end{tabular}
\caption{Distribution of X-shaped volcanoes in $\G_3(\F_{1009})$.}
\label{graph:1009:3}
\end{figure}

Finally, the remaining vertices form larger more diverse volcanoes that are described in Figure~\ref{graph:1009:4}.

\begin{figure}[H]
\centering
\begin{tabular}{c|c|c|c|l}
    Cordillera & Associated & Belt & Vertex & Volcano\\
    (Trace) & field & (Order) & count & shape\\\hline
     $7$ & $\Q(\sqrt{-443})$ & $\Z[\sqrt{-443}]$ & $3^1\cdot h(-443)=\color{red}15\color{black}$ & 
     \scalebox{0.5}{\tikzfig{volcanoes/l=3/1009/7}} \\\hline
     $11$ & $\Q(\sqrt{-435})$ & $\Z[\sqrt{-435}]$ & $(1+3)h(-435)=\color{red}16\color{black}$ & 
     \scalebox{0.5}{\tikzfig{volcanoes/l=3/1009/11}}$\times 2$\\\hline
     $16$ & $\Q(\sqrt{-420})$ & $\Z[\sqrt{-420}]$ & $(1+3)h(-420)=\color{red}32\color{black}$ & 
     \scalebox{0.5}{\tikzfig{volcanoes/l=3/1009/16}}$\times 4$\\\hline
     $20$ & $\Q(\sqrt{-404})$ & $\Z[\sqrt{-404}]$ & $3^1\cdot h(-404)=\color{red}42\color{black}$ & 
     \scalebox{0.5}{\tikzfig{volcanoes/l=3/1009/20}} \\\hline
     $34$ & $\Q(\sqrt{-20})$ & $\Z[\sqrt{-20}]$ & $3^1\cdot  h(-20)=\color{red}6\color{black}$ & 
     \scalebox{0.5}{\tikzfig{volcanoes/l=3/1009/34}} \\ & & $\Z[2\sqrt{-20}]$ & $3^1\cdot h(-80)=\color{red}12\color{black}$ & 
     \scalebox{0.5}{\tikzfig{volcanoes/l=3/1009/34b}} \\ & & $\Z[4\sqrt{-20}]$ & $3^1\cdot h(-320)=\color{red}24\color{black}$ & 
     \scalebox{0.5}{\tikzfig{volcanoes/l=3/1009/34b}}$\times 2$\\\hline
     $38$ & $\Q(\sqrt{-8})$ & $\Z[\sqrt{-8}]$ & $3^2\cdot h(-8)=\color{red}9\color{black}$ & 
     \scalebox{0.5}{\tikzfig{volcanoes/l=3/1009/38a}} \\ & & $\Z[2\sqrt{-8}]$ & $3^2\cdot h(-32)=\color{red}18\color{black}$ & 
     \scalebox{0.5}{\tikzfig{volcanoes/l=3/1009/38b}} \\\hline
     $47$ & $\Q(\sqrt{-203})$ & $\Z[\sqrt{-203}]$ & $3^1\cdot h(-203)=\color{red}12\color{black}$ & 
     \scalebox{0.5}{\tikzfig{volcanoes/l=3/1009/47}} \\\hline
     $61$ & $\Q(\sqrt{-35})$ & $\Z[\sqrt{-35}]$ & $3^1\cdot h(-35)=\color{red}6\color{black}$ & 
     \scalebox{0.5}{\tikzfig{volcanoes/l=3/1009/61}} 
\end{tabular}
\caption{Distribution of larger volcanoes in $\G_3(\F_{1009})$.}
\label{graph:1009:4}
\end{figure}

This completes our extensive description of $\G_3(\F_{1009})$. We may now check that all vertices are accounted for by adding all red numbers. We obtain a total of 
\begin{itemize}
    \item $10$ supersingular $j$-invariants;
    \item $14+16+7=37$ non-regular vertices for $0$;
    \item $3+31+10=44$ non-regular vertices for $1728$;
    \item $211$ solo regular vertices;
    \item $430$ vertices in regular duos;
    \item $85$ vertices in X-shaped volcanoes; 
    \item and
    $192$ vertices in larger volcanoes.
    %\item $9+2+4+10+20+12+12+10+2+4+8+8+16+9+8+12+6+4+8+7+12+4+4+8+5+6+1=211$ solo regular vertices;
    %\item $12+16+20+12+12+24+14+2+2+4+8+16+12+12+24+8+8+16+12+2+12+12+16+8+16+12+4+16+4+8+16+8+2+4+8+16+4+12+4+8+4=430$ vertices in regular duos;
    %\item $5+5+10+20+15+20+10 = 85$ vertices in X-shaped volcanoes; 
    %\item and    $15+16+32+42+6+12+24+9+18+12+6=192$ vertices in larger volcanoes.    
\end{itemize}

In total $10+37+44+211+430+85+192=1009=p$ vertices!

\section{The full isogeny graph for \texorpdfstring{$p=1009$ and $\ell=3$}{p=1009 and l=3}}
\label{app:constellation}
\begin{figure}[H]
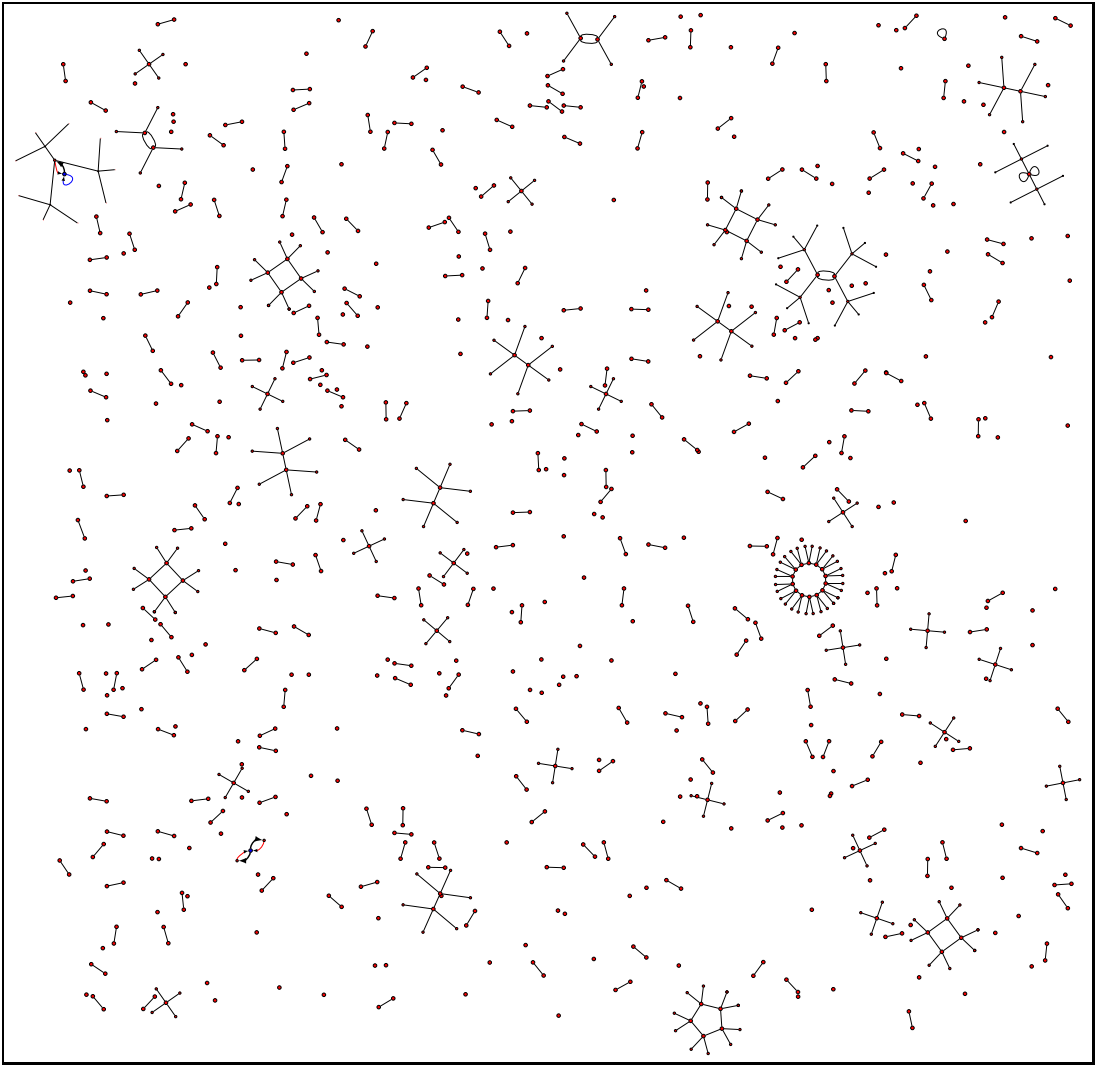

    \centering
%    \hspace*{-2.4cm}
    \fbox{\scalebox{0.45}{\tikzfig{fig/constellation}}}
    \caption{The volcano park $\mathcal{G}_{3}(1009)$}
    \label{fig:const}
\end{figure}

\begin{landscape}
\section{Examples of cordilleras with different values of \texorpdfstring{$\ell$}{l}}
\label{app:tables}
\begin{figure}[!htb]
\minipage{0.3\textwidth}
\begin{center}
\begin{tabular}{c|c|c|c}
    $m$ & $h(\Or_0)$ & $\ord_m$ & $\mathcal{B}_{t,m}$ \\\hline
     $1$ & $12$ & $2$ & \scalebox{0.5}{\tikzfig{volcanoes/l=2/7321/99}} $\times 6$ \\ \hline
     $3$ & $6$ & $2$ & \scalebox{0.5}{\tikzfig{volcanoes/l=2/7321/99}}$\times 3$\\\hline
     $5$ & $2$ & $2$ & \scalebox{0.5}{\tikzfig{volcanoes/l=2/7321/99}} \\\hline
     $3\cdot5$ & $1$ & $1$ & \scalebox{1}{\tikzfig{volcanoes/l=2/7321/679}}  
\end{tabular}
\end{center}
\caption{The $22$-cordillera in $\G_2(\F_{7321})$.}
\label{tab:2}\endminipage\hspace{50pt}
\minipage{0.3\textwidth}
\begin{center}
\begin{tabular}{c|c|c|c}
    $m$ & $h(\Or_0)$ & $\ord_m$ & $\mathcal{B}_{t,m}$ \\\hline
     $1$ & $24$ & $12$ & \scalebox{0.75}{\tikzfig{volcanoes/l=3/7321/5352}}$\times 2$ \\ \hline
     $2$ & $12$ & $6$ & \scalebox{0.75}{\tikzfig{volcanoes/l=3/7321/3660}}$\times 2$\\\hline
     $2^2$ & $6$ & $6$ & \scalebox{0.75}{\tikzfig{volcanoes/l=3/7321/3660}} \\\hline
     $5$ & $4$ & $4$ & \scalebox{0.75}{\tikzfig{volcanoes/l=3/7321/2525}} \\\hline
     $2\cdot5$ & $2$ & $2$ & \scalebox{0.75}{\tikzfig{volcanoes/l=3/7321/1490}} \\\hline
     $2^2\cdot5$ & $1$ & $1$ & \scalebox{1}{\tikzfig{volcanoes/l=3/7321/679}} 
\end{tabular}
\end{center}
\caption{The $22$-cordillera in $\G_3(\F_{7321})$.}
\label{tab:3}
\endminipage\hspace{50pt}
\minipage{0.3\textwidth}%
\begin{center}
\begin{tabular}{c|c|c|c}
    $m$ & $h(\Or_0)$ & $\ord_m$ & $\mathcal{B}_{t,m}$ \\\hline
     $1$ & $8$ & $1$ & \scalebox{1}{\tikzfig{volcanoes/l=5/7321/99}} $\times 8$ \\ \hline
     $2$ & $4$ & $1$ & \scalebox{1}{\tikzfig{volcanoes/l=5/7321/99}} $\times 4$ \\\hline
     $2^2$ & $2$ & $1$ & \scalebox{1}{\tikzfig{volcanoes/l=5/7321/99}}$\times 2$\\\hline
     $3$ & $4$ & $1$ & \scalebox{1}{\tikzfig{volcanoes/l=5/7321/99}}$\times 4$\\\hline
     $2\cdot3$ & $2$ & $1$ & \scalebox{1}{\tikzfig{volcanoes/l=5/7321/99}}$\times 2$\\\hline
     $2^2\cdot3$ & $1$ & $1$ & \scalebox{1}{\tikzfig{volcanoes/l=5/7321/99}} 
\end{tabular}
\end{center}
\caption{The $22$-cordillera in $\G_5(\F_{7321})$.}
\label{tab:5}
\endminipage
\end{figure}
\end{landscape}

%\section{Further Ideas/Things to do}

%\begin{itemize}
%    \item Variations on $\ell$
%    \item Adjacency Matrices and Laplace Matrices, characteristic polynomials
%    \item $\F_p\rightarrow \F_q$
%    \item Supersingular over $\F_p$
%    \item $\Z/p^n\Z$
%\end{itemize}

%\bibliographystyle{alpha}
\bibliographystyle{plain}
\bibliography{EC}

\begin{thebibliography}{10}

\bibitem{hash_09}
D.X. Charles, K.E. Lauter, and E.Z. Goren.
\newblock Cryptographic hash functions from expander graphs.
\newblock {\em J. Cryptology}, 22(1):93--113, 2009.

\bibitem{Cox97}
D.A. Cox.
\newblock {\em Primes of the {Form} $x^2 + ny^2$: {Fermat}, {Class} {Field}
  {Theory}, and {Complex} {Multiplication}}.
\newblock John Wiley \& Sons, Inc., Hoboken, NJ, USA, April 1997.

\bibitem{DeFeo_14}
L.~De~Feo, D.~Jao, and J.~Pl\^{u}t.
\newblock Towards quantum-resistant cryptosystems from supersingular elliptic
  curve isogenies.
\newblock {\em J. Math. Cryptol.}, 8(3):209--247, 2014.

\bibitem{FouM02}
M.~Fouquet and F.~Morain.
\newblock Isogeny volcanoes and the sea algorithm.
\newblock In C.~Fieker and D.R. Kohel, editors, {\em Algorithmic Number
  Theory}, pages 276--291, Berlin, Heidelberg, 2002. Springer Berlin
  Heidelberg.

\bibitem{Koh96}
D.~Kohel.
\newblock {\em Endomorphism rings of elliptic curves over finite fields}.
\newblock PhD thesis, University of California at Berkeley, 1996.

\bibitem{Lan87}
S.~Lang.
\newblock {\em Elliptic {Functions}}, volume 112 of {\em Graduate {Texts} in
  {Mathematics}}.
\newblock Springer New York, New York, NY, 1987.

\bibitem{Mir07}
J.~Miret, D.~Sadornil, J.~Tena, R.~Tom{\`a}s, and M.~Valls.
\newblock Isogeny cordillera algorithm to obtain cryptographically good
  elliptic curves.
\newblock In {\em ACSW}, 2007.

\bibitem{Nagell_1955}
T.~Nagell.
\newblock Contributions to the theory of a category of {D}iophantine equations
  of the second degree with two unknowns.
\newblock {\em Nova Acta Soc. Sci. Upsaliensis (4)}, 16(2):38, 1955.

\bibitem{Neukirch}
J\"{u}rgen Neukirch.
\newblock {\em Algebraic number theory}, volume 322 of {\em Grundlehren der
  mathematischen Wissenschaften [Fundamental Principles of Mathematical
  Sciences]}.
\newblock Springer-Verlag, Berlin, 1999.
\newblock Translated from the 1992 German original and with a note by Norbert
  Schappacher, With a foreword by G. Harder.

\bibitem{Pizer_90}
A.K. Pizer.
\newblock Ramanujan graphs and {H}ecke operators.
\newblock {\em Bull. Amer. Math. Soc. (N.S.)}, 23(1):127--137, 1990.

\bibitem{Sch95}
R.~Schoof.
\newblock Counting points on elliptic curves over finite fields.
\newblock {\em Journal de Th\'eorie des Nombres de Bordeaux}, 7(1):219--254,
  1995.

\bibitem{Silverman_advanced}
J.H. Silverman.
\newblock {\em Advanced topics in the arithmetic of elliptic curves}, volume
  151 of {\em Graduate Texts in Mathematics}.
\newblock Springer-Verlag, New York, 1994.

\bibitem{Sil09}
J.H. Silverman.
\newblock {\em The Arithmetic of Elliptic Curves}.
\newblock Graduate Texts in Mathematics. Springer New York, 2009.

\bibitem{Sut13}
A.~Sutherland.
\newblock Isogeny volcanoes.
\newblock {\em The Open Book Series}, 1(1):507--530, November 2013.

\bibitem{Sut12}
A.V. Sutherland.
\newblock Identifying supersingular elliptic curves.
\newblock {\em LMS Journal of Computation and Mathematics}, 15:317--325,
  September 2012.

\bibitem{Tate_1966}
J.~Tate.
\newblock Endomorphisms of abelian varieties over finite fields.
\newblock {\em Invent. Math.}, 2:134--144, 1966.

\bibitem{sagemath}
{The Sage Developers}.
\newblock {\em {S}ageMath, the {S}age {M}athematics {S}oftware {S}ystem
  ({V}ersion 9.2)}, 2022.
\newblock {\tt https://www.sagemath.org}.

\bibitem{Velu_1971}
J.~V\'{e}lu.
\newblock Isog\'{e}nies entre courbes elliptiques.
\newblock {\em C. R. Acad. Sci. Paris S\'{e}r. A-B}, 273:A238--A241, 1971.

\bibitem{Wat69}
W.C. Waterhouse.
\newblock Abelian varieties over finite fields.
\newblock {\em Annales scientifiques de l'École normale supérieure},
  2(4):521--560, 1969.

\bibitem{Yamamoto}
Y.~Yamamoto.
\newblock On unramified {G}alois extensions of quadratic number fields.
\newblock {\em Osaka Math. J.}, 7:57--76, 1970.

\end{thebibliography}

%\textcolor{red}{No of todos: \thetodocnth} + \textcolor{green}{\thetodocntfa} + \textcolor{blue}{\thetodocntfr} 

\end{document}